\begin{document}

\newtheorem{theorem}{Theorem}[section]
\newtheorem{proposition}[theorem]{Proposition}
\newtheorem{corollary}[theorem]{Corollary}
\newtheorem{remark}[theorem]{Remark}
\newtheorem{lemma}[theorem]{Lemma}
\newtheorem{example}[theorem]{Example}
\newtheorem{conjecture}[theorem]{Conjecture}
\newtheorem{unnumblemma}{Lemma}[section]
\newtheorem{definition}{Definition}[section]


\def\XX{{\bf{X}}}
\def\YY{{\bf{Y}}}
\def\EE{{\bf{E}}}
\def\Int{{\operatorname{Int}}}
\def\rad{{\operatorname{rad}}}
\def\Spec{\operatorname{Spec}}
\def\Sym{\operatorname{Sym}}
\def\Hom{\operatorname{Hom}}
\def\TM{t\textup{-Max}}
\def\TS{t\textup{-Spec}}
\def\WBF{\operatorname{WBF}}
\def\sKr{\operatorname{sKr}}
\def\Ass{\operatorname{Ass}}
\def\wAss{\operatorname{wAss}}
\def\lwAss{\operatorname{LwAss}}
\def\ann{\operatorname{ann}}
\def\Cl{\operatorname{Cl}}
\def\ZZ{{\mathbb Z}}
\def\CC{{\mathbb C}}
\def\NN{{\mathbb N}}
\def\RR{{\mathbb R}}
\def\QQ{{\mathbb Q}}
\def\FF{{\mathbb F}}
\def\OO{{\mathcal O}}
\def\mm{{\mathfrak m}}
\def\nn{{\mathfrak n}}
\def\aaa{{\mathfrak a}}
\def\bbb{{\mathfrak b}}
\def\ppp{{\mathfrak p}}
\def\qqq{{\mathfrak q}}
\def\rrr{{\mathfrak r}}
\def\PPP{{\mathfrak P}}
\def\MM{{\mathfrak M}}
\def\qq{{\mathfrak Q}}
\def\rr{{\mathfrak R}}
\def\DD{{\mathfrak D}}
\def\cc{{\mathfrak S}}
\def\TT{{\mathcal{T}}}
\def\SS{{\mathcal S}}
\def\UU{{\mathcal U}}
\def\aa{{\bf{a}}}
\def\bb{{\bf{b}}}
\def\rad{\operatorname{rad}}
\def\End{\operatorname{End}}
\def\id{\operatorname{id}}
\def\mod{\operatorname{mod}}
\def\im{\operatorname{im}}
\def\ker{\operatorname{ker}}
\def\coker{\operatorname{coker}}
\def\ord{\operatorname{ord}}
\def\Bin{\operatorname{Bin}}
\def\eval{\operatorname{eval}}

\title[Integer-valued polynomial rings]{Integer-valued polynomial rings, $t$-closure, and associated primes}
\date{\today} \author{Jesse Elliott} \address{Department of Mathematics\\ California
State University, Channel Islands\\ Camarillo, California 93012}
\email{jesse.elliott@csuci.edu}

\begin{abstract}
Given an integral domain $D$ with quotient field $K$, the ring of {\it integer-valued polynomials on $D$} is the subring $\{f(X) \in K[X]: f(D) \subset D\}$ of the polynomial ring $K[X]$.  Using the tools of $t$-closure and associated primes, we generalize some known results on integer-valued polynomial rings over Krull domains, PVMD's, and Mori domains.  
\end{abstract}





\maketitle

\section{Introduction}

\subsection{Summary}

For any integral domain $D$ with quotient field $K$, any set $\XX$, and any subset $\EE$ of $K^\XX$, the ring of {\it integer-valued polynomials on $\EE$} is the subring
$$\Int(\EE,D) = \{f(\XX) \in K[\XX]: f(\EE) \subset D\}$$
of the polynomial ring $K[\XX]$.   One writes $\Int(D^\XX) = \Int(D^\XX,D)$, one writes $\Int(D^n) = \Int(D^{\{X_1,X_2, \ldots, X_n\}})$ for any positive integer $n$, and one writes $\Int(D) = \Int(D^1) = \Int(D,D)$.  Integer-valued polynomial rings possess a rich algebraic theory with strong connections to algebraic number theory, and at the same time they provide an ample source of examples and counterexamples in the theory of non-Noetherian commutative rings.  For example, the ring $\Int(\ZZ)$ is a simple and natural example of a non-Noetherian Pr\"ufer domain of Krull dimension two contained in $\QQ[X]$.   Moreover, the prime ideals of $\Int(\ZZ)$ containing a given prime number $p$ are in bijective correspondence with the $p$-adic integers, where $\alpha \in \ZZ_p$ corresponds to the prime ideal $\mm_{p, \alpha} = \{f(X) \in \Int(\ZZ): f(\alpha) \in p\ZZ_p\}$, and the prime ideal $\mm_{p,\alpha}$ has height 2 or 1 according as $\alpha \in \ZZ_p$ is algebraic or transcendental over $\QQ$ \cite[Proposition V.2.7]{cah}.

The study of integer-valued polynomial rings began around 1919 when P\'olya and Ostrowski characterized those number rings $\OO$ for which $\Int(\OO)$ has a free $\OO$-module basis consisting of exactly one polynomial of each degree.  (This holds, for example, if $\OO$ is a PID.)  As with ordinary polynomial rings, their study involves a wide range of techniques of commutative algebra. However, unlike polynomial rings, integer-valued polynomial rings tend to elude many of the standard techniques.  For example, since $\Int(D)_\ppp$ is not necessarily equal to $\Int(D_\ppp)$ for a prime ideal $\ppp$ of an arbitrary domain $D$, even the technique of localization has its limitations here.  Consequently, there are numerous open questions concerning $\Int(D)$, regarding, for example, its $D$-module structure, prime spectrum, and Picard group.  It is unknown, for instance, if there is a domain $D$ for which $\Int(D)$ is not free (or flat) as a $D$-module.  Also, it is unknown whether or not $\Int(D)$ has finite Krull dimension whenever $D$ has finite Krull dimension.  


Although many results about ordinary polynomial rings do not generalize to integer-valued polynomial rings, some tools from the theory of non-Noetherian commutative rings have found application here.  One such tool is that of a star operation, and in particular the $t$-closure star operation.  A {\it star operation} on an integral domain $D$ is a closure operator $I \longmapsto I^*$ on the partially ordered set of nonzero fractional ideals of $D$ that respects principal ideals in the sense that $I^* = I$ and $(IJ)^* = I J^*$ for any nonzero fractional ideals $I$ and $J$ such that $I$ is principal. (The condition on principal ideals is required to allow one to define the {\it star class group} $\Cl^*(D)$ of ``$*$-invertible'' $*$-closed ideals modulo principal ideals, which contains and generalizes the Picard group of $D$.)  The {\it $t$-closure} operation is one of the most useful examples of a star operation.  By definition it acts by $I \longmapsto I_t$, where $I_t = \bigcup (J^{-1})^{-1}$, where the union ranges over the set of all finitely generated ideals $J$ contained in $I$, and where $J^{-1}$ denotes the fractional ideal $(D:_K J)$, where $K$ is the quotient field of $D$.  One of its many uses is that an integral domain $D$ is a UFD if and only if the $t$-closure of any nonzero fractional ideal of $D$ is principal.

The articles \cite{clt,ell4,par,tar} have hinted at the applicability of the $t$-closure star operation to the study of integer-valued polynomial rings.  This paper further advances that theme.  In particular, we use the related tools of $t$-closure and associated primes to generalize some known results on integer-valued polynomial rings over Krull domains, PVMD's, and Mori domains, including \cite[Proposition 2.1]{cgh}, \cite[Theorem 1.3 and Propositions 6.8 and 6.10]{ell}, and \cite[Theorem 3.8 and Propositions 3.5 and 4.3]{ell4}.

A {\it $t$-ideal} of a domain $D$ is a nonzero fractional ideal $I$ such that $I = I_t$, and a {\it $t$-maximal} ideal is an ideal that is maximal among the $t$-ideals properly contained in $D$.  Every $t$-maximal ideal is prime.  Any invertible fractional ideal, for example, is a $t$-ideal, and therefore any nonzero fractional ideal of a Dedekind domain is a $t$-ideal; and the $t$-maximal ideals of a Krull domain are precisely the prime ideals of height one.  In Sections 1.2--1.4 we collect some definitions and facts about $t$-ideals and associated primes, TV domains \cite{hou}, H domains \cite{gla}, PVMD's, and $t$-linked extensions.  Few of the results in those sections are new but are included for the uninitiated reader.


In Section 2, we give some results that highlight the importance of the prime $t$-ideals and the weak Bourbaki associated primes with regard to integer-valued polynomial rings.  
For example, 
we prove in Section 2 that $\Int(S^{-1}D) = S^{-1}\Int(D)$ for every multiplicative subset $S$ of $D$ if $D$ is a domain {\it of finite $t$-character}, that is, if every nonzero element of $D$ is contained in only finitely many $t$-maximal ideals of $D$.  Note that Noetherian domains, Krull domains, Mori domains, TV domains, and domains of Krull type, for example, are all of finite $t$-character.  In Section 2 we also prove the following more general result.

\begin{theorem}\label{locallyfinite1}
Let $D$ be a domain that is equal to $\bigcap_{\ppp \in \SS} D_\ppp$ for some subset $\SS$ of $\Spec(D)$ such that every nonzero element of $D$ lies in only finitely many $\ppp$ in $\SS$.  Then $\Int(S^{-1}D) = S^{-1}\Int(D)$ for every multiplicative subset $S$ of $D$.
\end{theorem}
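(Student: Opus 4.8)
The plan is to split the asserted equality into two inclusions. The inclusion $S^{-1}\Int(D)\subseteq\Int(S^{-1}D)$ is standard and holds for an arbitrary domain, so the content of the theorem is the reverse inclusion $\Int(S^{-1}D)\subseteq S^{-1}\Int(D)$. Fix $f\in\Int(S^{-1}D)$ and set $\aaa_f=\{x\in D:xf\in\Int(D)\}$, the denominator ideal of $f$ over $D$. It is a nonzero ideal of $D$, since any common denominator $d\in D$ of the coefficients of $f$ lies in it, and one checks directly that $f\in S^{-1}\Int(D)$ if and only if $\aaa_f\cap S\neq\emptyset$. So it suffices to prove $\aaa_f\cap S\neq\emptyset$.

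Here is where the hypothesis enters. Using $D=\bigcap_{\ppp\in\SS}D_\ppp$ together with the well-known fact that the ring of integer-valued polynomials on a domain is contained in the ring of integer-valued polynomials on any of its localizations, one obtains $\Int(D)=\bigcap_{\ppp\in\SS}\Int(D_\ppp)$, and consequently $\aaa_f=\bigcap_{\ppp\in\SS}\bigl(\aaa_f^{(\ppp)}\cap D\bigr)$, where $\aaa_f^{(\ppp)}=\{x\in D_\ppp:xf\in\Int(D_\ppp)\}$. If $d\notin\ppp$ then $f\in D_\ppp[X]\subseteq\Int(D_\ppp)$, so $\aaa_f^{(\ppp)}=D_\ppp$ and the $\ppp$-term of the intersection equals $D$; by the finite-character hypothesis applied to the nonzero element $d$, only finitely many $\ppp\in\SS$ fail this. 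Hence $\aaa_f$ is a \emph{finite} intersection of ideals of $D$. Since $S$ is multiplicative, a product of elements chosen one from each of finitely many ideals lies in all of them simultaneously, so $S$ meets a finite intersection of ideals as soon as it meets each factor. It therefore suffices to show, for each of the finitely many $\ppp\in\SS$ with $d\in\ppp$, that $sf\in\Int(D_\ppp)$ for some $s\in S$.

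For such a $\ppp$, first suppose $\ppp\cap S=\emptyset$. Then $D_\ppp$ is a fraction ring of $S^{-1}D$, so by the same well-known fact $f\in\Int(S^{-1}D)\subseteq\Int(D_\ppp)$, and $s=1$ works. The remaining case, $\ppp\cap S\neq\emptyset$, is the main obstacle: one must actually clear the denominator of $f$ over the local ring $D_\ppp$ by an element of $S$, and the crude containment $f(D_\ppp)\subseteq d^{-1}D_\ppp$ does not suffice, since no power of a given $t\in\ppp\cap S$ need lie in $dD_\ppp$. This is precisely the point at which one uses that $f$ is integer-valued on all of $S^{-1}D$ and not merely on $D$: from $f\in\Int(S^{-1}D)$ one gets $f\in\Int(S^{-1}D_\ppp)$, and a separate analysis of this semilocal-to-local passage—or an induction that strips off one element of $\ppp\cap S$ at a time, together with the finite-character fact that localizing at such an element again produces a ring of the form $\bigcap_{\ppp'}D_{\ppp'}$ over a subfamily with finite character—should yield the desired $s\in S$ with $sf\in\Int(D_\ppp)$. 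I expect this last step to be where essentially all the difficulty lies.
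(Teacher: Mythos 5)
Your reduction is sound up to the final case, and up to that point it runs parallel in spirit to the paper's argument (common denominator, finite character, all but finitely many $\ppp \in \SS$ handled trivially, a product of elements of $S$ clearing finitely many bad primes at once). But the case you yourself flag as ``the main obstacle'' --- a bad prime $\ppp \in \SS$ with $\ppp \cap S \neq \emptyset$, where you must produce $s \in S$ with $sf \in \Int(D_\ppp)$ --- is precisely the mathematical content of the theorem, and you do not prove it: ``a separate analysis \dots should yield'' is a hope rather than an argument, and the induction you sketch (stripping off one element of $\ppp \cap S$ at a time) has no evident termination, since $\ppp \cap S$ can be infinite. So the proposal has a genuine gap at its decisive step.

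The idea that closes it, and that the paper isolates as Lemma \ref{interchange}, is this: a prime strictly contained in another prime is non-maximal, hence has infinite residue field (a domain that is not a field is infinite), hence is a strong polynomial prime, i.e. $\Int(D_{\ppp'}) = D_{\ppp'}[X]$ by \cite[Corollary I.3.7]{cah}. In your situation, since $\ppp \cap S \neq \emptyset$, every prime $\ppp' \subseteq \ppp$ with $\ppp' \cap S = \emptyset$ is strictly contained in $\ppp$, and $D_{\ppp'}$ is a localization of $S^{-1}D$; hence $\Int(S^{-1}D) \subseteq \bigcap_{\ppp'} \Int(D_{\ppp'}) = \bigcap_{\ppp'} D_{\ppp'}[X] = \bigl(S^{-1}(D_\ppp)\bigr)[X] = S^{-1}\bigl(D_\ppp[X]\bigr)$, where the middle equality holds because $S^{-1}(D_\ppp)$, like any localization of $D$, is the intersection of the $D_{\ppp'}$ over the primes $\ppp'$ surviving in it (exactly the $\ppp' \subseteq \ppp$ with $\ppp' \cap S = \emptyset$), and the last because a polynomial has only finitely many coefficients. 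This hands you $s \in S$ with $sf \in D_\ppp[X] \subseteq \Int(D_\ppp)$, completing your argument. For comparison, the paper never treats a general $S$ directly: it reduces to $S = D \setminus \qqq$ via Proposition \ref{localization} and then applies the same observation in the form $\Int(D_\qqq) \subseteq (D_\ppp)_\qqq[X] = (\Int(D_\ppp))_\qqq$ for $\ppp \neq \qqq$. Once patched with the lemma above, your route is an equivalent direct version of that proof rather than a genuinely different one.
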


We also give some evidence in Section 2 for the claim that $\Int(D)$ is not flat over $D$ if $D = \FF_2[[T^2,T^3]]$ or if $D = \FF_2+T\FF_4[[T]]$.

Section 3 contains the main results of this paper.  To state our main theorem we need to give a little more background on integer-valued polynomial rings.

Unlike the situation with ordinary polynomial rings, if $D$ and $A$ are integral domains such that $D \subset A$, then it does not follow that $\Int(D) \subset \Int(A)$.  This leads one to study various properties of extensions of domains with regard to integer-valued polynomial rings.  One of the most important of these properties is the following.  As in \cite{ell}, we say that extension $A \supset D$ of domains is {\it polynomially regular} if $\Int(D,A)$ is generated by $\Int(D)$ as an $A$-module.  
For example, it is well-known that every extension of a Dedekind domain is polynomially regular.  More generally, by \cite[Corollary 3.14]{ell}, every flat extension of a Krull domain is polynomially regular, whereas the extension $\ZZ[T/2]$ of the UFD $\ZZ[T]$ is not, by \cite[Example 7.3]{ell}.  Several important and well-known conditions regarding integer-valued polynomial rings can be subsumed under the polynomial regularity condition.  For example, one has $\Int(S^{-1}D) = S^{-1}\Int(D)$ for a given multiplicative subset $S$ of $D$ if and only if $S^{-1}D$ is a polynomially regular extension of $D$.

A subset $E$ of $D$ is said to be a {\it polynomially dense subset} of $D$ if $\Int(E,D) = \Int(D)$.  Various authors have sought to characterize the polynomially dense subsets of a given domain (or class of domains).  Alternatively, authors have sought to characterize those domains containing a given domain as a polynomially dense subset.  In \cite{ell}, we called such extensions {\it polynomially complete} (for lack of a better term).  Thus, an extension $A$ of a domain $D$ is polynomially complete if and only if $\Int(D,A) = \Int(A)$.  This condition implies but is not equivalent to $\Int(D) \subset \Int(A)$.  As in \cite{ell}, if $\Int(D) \subset \Int(A)$, then we say that the extension $A$ of $D$ is {\it weakly polynomially complete}.  It is easy to verify that a polynomially regular extension is polynomially complete if and only if it is weakly polynomially complete.  Moreover, by \cite[Proposition 2.4]{ell}, the extension $\Int(D^\XX)$ of $D$ is the free polynomially complete extension of $D$ generated by $\XX$ for any infinite integral domain $D$.

Generally, an extension $A \supset D$ of domains is said to be {\it $t$-linked} if $I_t = D$ implies $(IA)_t = A$ for any nonzero ideal $I$ of $D$, or equivalently if $A = \bigcap_{\ppp \in \TM(D)} A_\ppp$, where $\TM(D)$ denotes the set of $t$-maximal ideals of $D$ \cite{dob}.  For example, any flat extension of a domain $D$ is $t$-linked, and the extension $\Int(D^\XX)$ of $D$ is $t$-linked for any set $\XX$.  (See Proposition \ref{tlinkedlemma} and Lemma \ref{inttlinked}.)  An extension $A \supset D$ of domains is said to be {\it unramified at $\ppp$}, where $\ppp$ is a prime ideal of $D$, if $\ppp A_\qqq = \qqq A_\qqq$ and $\kappa(\qqq) = A_\qqq/\qqq A_\qqq$ is a finite separable field extension of $\kappa(\ppp) = D_\ppp/\ppp D_\ppp$ for every prime ideal $\qqq$ of $A$ lying over $\ppp$.  Also, we say that $A$ has {\it trivial residue field extensions at $\ppp$} if $\kappa(\qqq) = \kappa(\ppp)$ for every prime ideal $\qqq$ of $A$ lying over $\ppp$.  The following result is our main theorem, proved in Section 3.

\begin{theorem}\label{introthm}
Let $D$ be an infinite domain such that $\ppp D_\ppp$ is principal and $\Int(D_\ppp) = \Int(D)_\ppp$ for every $t$-maximal ideal $\ppp$ of $D$ with finite residue field.  This holds, for example, if $D$ is a TV PVMD, or more generally if $D$ is an H PVMD such that $\Int(D_\ppp) = \Int(D)_\ppp$ for all $\ppp$.  Then we have the following.
\begin{enumerate}
\item[(a)] $\Int(D)$ is locally free as a $D$-module.
\item[(b)] $\Int(S^{-1}D) = S^{-1}\Int(D)$ for every multiplicative subset $S$ of $D$.
\item[(c)] Every $t$-linked extension of $D$ is polynomially regular.
\item[(d)] A $t$-linked extension of $D$ is polynomially complete if and only if it is unramified and has trivial residue field extensions at every $t$-maximal ideal of $D$ with finite residue field.  
\end{enumerate}
\end{theorem}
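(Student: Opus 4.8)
The plan is to reduce everything to a single local statement at $t$-maximal ideals with finite residue field, and then exploit the hypothesis that $\Int$ localizes and that $\ppp D_\ppp$ is principal there. The backbone is the observation (from the cited results on Krull domains and PVMD's, together with the structure theory of $t$-closure) that a $t$-linked extension $A \supset D$ satisfies $A = \bigcap_{\ppp \in \TM(D)} A_\ppp$, so that questions about $\Int(D,A)$ versus $\Int(D)$ can be checked $t$-maximal-ideal by $t$-maximal-ideal, after verifying that the relevant intersection of localizations behaves well under the $\Int$ functor. The key background fact I would use is that when $\ppp D_\ppp$ is principal and $D_\ppp$ has infinite residue field, $\Int(D_\ppp) = D_\ppp[X]$, so the local behavior is trivial; the only interesting local rings are the $D_\ppp$ with finite residue field, where by hypothesis $\Int(D)_\ppp = \Int(D_\ppp)$ and $D_\ppp$ is a DVR-like (one-dimensional, principal maximal ideal) or at least a rank-one discrete-type valuation ring, for which the classical theory of $\Int$ over discrete valuation rings applies.

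\emph{Part (a).} For local freeness of $\Int(D)$ as a $D$-module, I would show $\Int(D)_\ppp$ is free over $D_\ppp$ for every prime $\ppp$, and then note it suffices to treat $\ppp$ contained in a $t$-maximal ideal. Localizing at a prime $\qqq$ contained in a $t$-maximal ideal $\ppp$ reduces via $\Int(D)_\ppp = \Int(D_\ppp)$ (the hypothesis, when the residue field is finite) or via $\Int(D)_\ppp = D_\ppp[X]$ (when the residue field is infinite, using $\ppp D_\ppp$ principal) to the local case; over a one-dimensional valuation domain with principal maximal ideal, $\Int$ is free with a regular basis (the classical P\'olya--Ostrowski / binomial-type basis, e.g. \cite[Chapter II]{cah}). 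The subtlety is passing from ``$\Int(D)_\ppp$ free for all $t$-maximal $\ppp$'' to ``$\Int(D)$ locally free'': I would use that $D = \bigcap_{\ppp \in \TM(D)} D_\ppp$ with finite $t$-character is not assumed, so instead I would argue directly that $\Int(D)_\qqq = \Int(D_\qqq)$ for all primes $\qqq$ (which follows from the $t$-maximal hypotheses by a localization-of-intersection argument, cf.\ Theorem \ref{locallyfinite1}'s proof technique) and then invoke freeness of $\Int$ over the local rings $D_\qqq$, each of which is either a valuation ring of the required type or has $\ppp D_\ppp$ principal with infinite residue field.

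\emph{Parts (b) and (c).} Part (b) is the special case $A = S^{-1}D$ of part (c), since $S^{-1}D$ is always a flat, hence $t$-linked, extension; so I would prove (c). Let $A \supset D$ be $t$-linked, so $A = \bigcap_{\ppp \in \TM(D)} A_\ppp$. Polynomial regularity asks that $\Int(D,A) = A \cdot \Int(D)$. The inclusion $\supseteq$ is clear. For $\subseteq$, take $f \in \Int(D,A)$; I would show $f \in A_\ppp \cdot \Int(D)_\ppp$ for each $t$-maximal $\ppp$, then intersect. At a $t$-maximal $\ppp$ with infinite residue field, $\Int(D)_\ppp = D_\ppp[X]$ and $\Int(D,A)_\ppp \subseteq \Int(D_\ppp, A_\ppp) = A_\ppp[X]$ (the last equality again because $\ppp D_\ppp$ principal and residue field infinite forces integer-valued to mean honestly polynomial), so regularity is automatic there. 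At a $t$-maximal $\ppp$ with finite residue field, I use $\Int(D)_\ppp = \Int(D_\ppp)$ and the fact that $A_\ppp$ is a flat (indeed $t$-linked) extension of the valuation-type domain $D_\ppp$; by \cite[Corollary 3.14]{ell} (flat extensions of Krull domains are polynomially regular, and $D_\ppp$ is here of dimension one, hence Krull) the extension $A_\ppp \supset D_\ppp$ is polynomially regular, giving $\Int(D_\ppp,A_\ppp) = A_\ppp \cdot \Int(D_\ppp)$. Combining and using that $\Int(D,A)_\ppp \subseteq \Int(D_\ppp,A_\ppp)$ (which needs a short verification that $\Int$ commutes with these localizations on the $A$-side, again from the finite-character/$t$-linked structure), we get $f \in \bigcap_\ppp A_\ppp \cdot \Int(D)_\ppp = A \cdot \Int(D)$, the last equality because $\Int(D)$ is locally free (part (a)) and $A$-linked intersections of the localized module recover the module.

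\emph{Part (d).} Given (c), a $t$-linked extension is polynomially complete iff it is weakly polynomially complete, i.e.\ iff $\Int(D) \subseteq \Int(A)$. I would characterize this locally: $\Int(D) \subseteq \Int(A)$ iff for every $a \in A$ and every $f \in \Int(D)$ we have $f(a) \in A$; localizing, this reduces to checking, at each $t$-maximal $\ppp$ of $D$ with finite residue field (the infinite-residue-field primes contribute nothing since $\Int(D)_\ppp = D_\ppp[X]$ there), that every prime $\qqq$ of $A$ over $\ppp$ has $\Int(D_\ppp) \subseteq \Int(A_\qqq) = \Int(A_\qqq, A_\qqq)$ in the appropriate sense. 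Over a one-dimensional valuation domain with finite residue field $k = \kappa(\ppp)$ of size $q$, the classical criterion (see \cite[Chapter~I/II]{cah}, or \cite[Proposition 6.8]{ell}) says that $\Int(D_\ppp) \subseteq \Int(A_\qqq)$ precisely when the extension does not ``collapse'' residue classes or ramify — concretely, when $\ppp A_\qqq = \qqq A_\qqq$ (unramified) and $\kappa(\qqq) = \kappa(\ppp)$ (trivial residue extension); the point is that the Fermat-type polynomial $\prod_{c \in k}(X - \tilde c)$ divided by $\pi$ (with $\pi$ a generator of $\ppp D_\ppp$) is integer-valued on $D_\ppp$ but fails to be integer-valued on $A_\qqq$ exactly when the residue field grows or $\pi$ acquires extra valuation. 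Assembling these local conditions over all $\qqq$ lying over all finite-residue-field $t$-maximal $\ppp$ yields precisely the stated criterion. \textbf{The main obstacle} I anticipate is the bookkeeping needed to justify that $\Int$ commutes with the relevant localizations on the extension ring $A$ — i.e.\ that $\Int(D,A)_\ppp = \Int(D_\ppp, A_\ppp)$ and $\Int(A)_\ppp = \Int(A_{(\ppp)})$ for $t$-maximal $\ppp$ — since $A$ need not have finite $t$-character even though $D$-related intersections are locally finite in the sense used for Theorem \ref{locallyfinite1}; handling this cleanly will likely require proving first, as a lemma, that a $t$-linked extension of a domain satisfying the hypotheses of the theorem inherits enough local finiteness for these localization identities to hold.
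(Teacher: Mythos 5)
Your proposal follows the paper's general architecture (reduce to $t$-maximal ideals via $A=\bigcap_{\ppp\in\TM(D)}A_\ppp$, dispose of infinite residue fields trivially, use principality of $\ppp D_\ppp$ plus $\Int(D_\ppp)=\Int(D)_\ppp$ at finite residue fields), but it has a genuine gap at the decisive step of part (c): the equality $\bigcap_{\ppp\in\TM(D)}(A\Int(D))_\ppp=A\Int(D)$. Your justification --- that $\Int(D)$ is locally free and ``$A$-linked intersections of the localized module recover the module'' --- is not an argument: a $D$-submodule of $K[X]$ is recovered by intersecting its localizations at \emph{all} maximal ideals, but not in general at only the $t$-maximal ideals; that recovery property is exactly $t$-linkedness and has to be \emph{proved} for the ring $A\Int(D)$. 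The paper's proof (Theorem \ref{tpolyreg}, (c)$\Rightarrow$(a), fed by part (a) of the theorem) supplies it as follows: local freeness makes $\Int(D)$ flat, so $A\Int(D)=A\otimes_D\Int(D)$ by \cite[Proposition 3.9]{ell}; then Proposition \ref{tlinkedlemma}(b) shows $A\otimes_D\Int(D)$ is a $t$-linked extension of $D$, and Proposition \ref{tlinkedextensions}(4) gives the intersection formula. This flatness-plus-$t$-linkedness mechanism is the central idea of the whole theorem and is absent from your sketch.

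The local step and the localization bookkeeping also rest on assumptions you do not have. At a finite-residue-field $t$-maximal $\ppp$ you take $A_\ppp$ to be flat over $D_\ppp$ and $D_\ppp$ to be one-dimensional, hence Krull, to invoke \cite[Corollary 3.14]{ell}; but $t$-linkedness of $A$ over $D$ gives neither flatness nor even $t$-linkedness of $A_\ppp$ over $D_\ppp$ ($t$-linkedness is not a local property, as the example following Proposition \ref{tlinkedlemma} shows), and under the stated hypotheses $D_\ppp$ is merely a local domain with principal maximal ideal --- without Noetherian or PVMD assumptions it need not be one-dimensional, Krull, or a valuation ring. The ingredient the paper uses instead is that such a local ring is \emph{absolutely} polynomially regular (every extension, flat or not, is polynomially regular), coming from \cite[Theorem 1.4 and Proposition 3.6]{ell}; this is what makes Theorem \ref{tpolyreg}(c) and Corollary \ref{pvmdext} applicable and removes any need for flatness of $A_\ppp$. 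Likewise, your proposed fix for the localization identities --- importing local finiteness ``in the sense of Theorem \ref{locallyfinite1}'' --- is unavailable: finite $t$-character is precisely what Theorem \ref{introthm} does \emph{not} assume. The paper gets $\Int(S^{-1}D)=S^{-1}\Int(D)$ (part (b), and the polynomial L-regularity needed elsewhere) from Proposition \ref{localization} via the associated-prime implications of Lemma \ref{implications}, and it never needs $\Int(D,A)_\ppp=\Int(D_\ppp,A_\ppp)$, only $\Int(D,A)=\bigcap_\ppp\Int(D_\ppp,A_\ppp)=\bigcap_\ppp(A\Int(D))_\ppp$, which follows from $A=\bigcap_\ppp A_\ppp$ together with absolute polynomial regularity and polynomial L-regularity at each $\ppp$. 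The same caveat applies to part (d): the unramified/trivial-residue-field criterion must be cited in the form valid for arbitrary local domains with principal maximal ideal and finite residue field (Theorem \ref{mostgeneralthm}, i.e.\ \cite[Theorem 1.4]{ell}), not only for discrete valuation rings.
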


If $D$ is a Krull domain, then $\TM(D)$ is equal to the set $X^1(D)$ of prime ideals of $D$ of height one, and therefore an extension $A$ of $D$ is $t$-linked if and only if $A = \bigcap_{\ppp \in X^1(D)} A_\ppp$.  Thus Theorem \ref{introthm} generalizes \cite[Theorem 1.3]{ell} and \cite[Theorem 3.8]{ell4}.

Note that the integral domain $D = k[[T^2,T^3]]$, where $k$ is any finite field, does not satisfy the hypotheses of Theorem \ref{introthm}, since the unique maximal ideal $(T^2,T^3) = (T^2:_D T^3)$ of $D$ has residue field $k$ and is $t$-maximal but not principal.  We conjecture that this domain does not satisfy statements (a), (c), or (d) of Theorem \ref{introthm}.

By \cite[Proposition XI.1.1]{cah}, for any domain $D$ other than a finite field, one has $\Int(\Int(D^\XX)^\YY) = \Int(D^{\XX \amalg \YY})$ for any sets $\XX$ and $\YY$, in analogy with ordinary polynomial rings.  Likewise, for any domain $D$ and any set $\XX$ there exists a canonical $D$-algebra homomorphism $\theta_\XX: \bigotimes_{X \in \XX} \Int(D) \longrightarrow \Int(D^\XX)$, where the (possibly infinite) tensor product is over $D$.  In analogy with ordinary polynomial rings, one might hope, if not expect, that this homomorphism be an isomorphism.  However, no proof is known that $\theta_\XX$ is always an isomorphism, nor is there a known counterexample.  If $D$ is a domain such that the homomorphism $\theta_\XX$ is an isomorphism for any set $\XX$, then we say that $D$ is {\it polynomially composite}.  For example, if statements (a) and (b) of Theorem \ref{introthm} hold, then $D$ is polynomially composite by \cite[Proposition 6.10]{ell}.  In Section 3 we also give an application of our results to polynomial compositeness.

\subsection{$t$-ideals and associated primes}

For the remainder of Section 1 we review some definitions and facts about $t$-ideals and associated primes, TV domains, H domains, PVMD's, and $t$-linked extensions.  Throughout $D$ is an integral domain with quotient field $K \neq D$.

A $D$-submodule $I$ of $K$ is a {\it fractional ideal} of $D$ if $dI \subset D$ for some nonzero $d \in D$.  For any fractional ideal $I$ of $D$ let $I^{-1} = (D :_K I)$, let $I_v = (I^{-1})^{-1}$, and let $I_t = \bigcup J_v$, where $J$ ranges over the set of nonzero finitely generated fractional ideals contained in $I$.  The operations $I \longmapsto I_v$ and $I \longmapsto I_t$ are examples of {\it closure operators} $*: I \longmapsto I^*$ on the partially ordered set of nonzero fractional ideals of $D$, in that $I \subset I^*$ and $(I^*)^* = I^*$, and $I \subset J$ implies $I^* \subset J^*$, for all $I$ and $J$.  Moreover, these two closure operators {\it respect principal fractional ideals} in that $I^* = I$ and $(IJ)^* = I J^*$ if $I$ is principal.  A closure operator on the partially ordered set of nonzero fractional ideals of $D$ that respects principal fractional ideals is called a {\it star operation on $D$}.
For any star operation $*$ on $D$, one has $I^* \subset I_v$ for any nonzero fractional ideal $I$ of $D$.  Thus the $v$-closure operation is the ``coarsest'' of all star operations.

A fractional ideal $I$ of $D$ is said to be a {\it $t$-ideal} if $I = I_t$ and a {\it $v$-ideal} if $I = I_v$.  
Every invertible fractional ideal, for example, is a $v$-ideal.  Also, every $v$-ideal is a $t$-ideal, and every finitely generated $t$-ideal is a $v$-ideal.  If every $t$-ideal of $D$ is a $v$-ideal, then $D$ is said to be a {\it TV domain}.

An ideal $I$ of $D$ is {\it $t$-prime} if $I$ is a prime $t$-ideal, and $I$ is {\it $t$-maximal} if it is maximal among the $t$-ideals properly contained in $D$.  The {\it $v$-prime} and {\it $v$-maximal} ideals are defined similarly.  The $t$-maximal and $v$-maximal ideals of a domain are all prime.  By an application of Zorn's lemma, every nonunit, and in fact every proper $t$-ideal, of $D$ is contained in some $t$-maximal ideal of $D$.  By contrast, a domain may have no $v$-maximal ideals.  The set of $t$-prime and $t$-maximal ideals of $D$ will be denoted $\TS(D)$ and $\TM(D)$, respectively.

Unfortunately, $t$-closure does not in general commute with localization.  However, if $I$ is a $t$-ideal of $D_\ppp$ for some prime ideal $\ppp$ of $D$, then $I \cap D$ is a $t$-ideal of $D$ \cite{kang}.  Thus, if a prime ideal $\ppp$ of $D$ is {\it $t$-localizing}, that is, if $\ppp D_\ppp$ is a $t$-ideal of $D_\ppp$, then $\ppp$ is a $t$-ideal of $D$.  Thus, for example, any nonzero prime $\ppp$ such that $\ppp D_\ppp$ is principal is $t$-localizing.

A prime ideal $\ppp$ of $D$ is said to be an {\it associated prime} (in the Bourbaki sense) of a $D$-module $M$ if $\ppp$ equals the annihilator of some element of $M$.
A prime ideal $\ppp$ is said to be a {\it weakly associated prime} of $M$ if $\ppp$ is minimal over the annihilator of some element of $M$.  The sets of all such primes $\ppp$ of $D$ are denoted $\Ass(M)$ and $\wAss(M)$, respectively.  If $M \neq 0$, then $\wAss(M) \neq \emptyset$, but $\Ass(M)$ may be empty.  A {\it conductor ideal} of $D$ is an ideal of the form $(aD:_D bD)$ with $a, b \in D$ and $a \neq 0$.  A prime ideal $\ppp$ of $D$ lies in $\wAss(K/D)$ (resp., $\Ass(K/D)$) if and only if $\ppp$ is minimal over (resp., equal to) some conductor ideal of $D$.  Note that $D = \bigcap_{\ppp \in \wAss(K/D)} D_\ppp$ for any domain $D$.

A prime ideal $\ppp$ of $D$ is said to be a {\it strong Krull prime}, or {\it Northcott attached prime}, of a $D$-module $M$ if for every finitely generated ideal
$I \subset \ppp$ there exists an $m \in M$ such that $I \subset \ann(m) \subset \ppp$ \cite{dut}.  We let $\sKr(M)$ denote the set of all strong Krull primes of $D$.  One has $\wAss(M) \subset \sKr(M)$ for any $D$-module $M$ by \cite[Proposition 2]{dut}, and the finitely generated ideals of $\Ass(M)$ and $\sKr(M)$ coincide.

By \cite[Proposition 1.1]{zaf1}, a prime ideal $\ppp$ of $D$ is $t$-localizing if and only if $\ppp \in \sKr(K/D)$, that is, if and only if for every finitely generated ideal $I \subset \ppp$ there exists a conductor ideal $J$ of $D$ such that $I \subset J \subset \ppp$.  In particular, one has $$\Ass(K/D) \subset \wAss(K/D) \subset \sKr(K/D) \subset \TS(D).$$

\subsection{TV domains, H domains, and PVMD's}

An ideal $I$ of $D$ is said to be {\it $t$-invertible} if $(I^{-1}I)_t = D$ and {\it $v$-invertible} if $(I^{-1}I)_v = D$.  The following result follows from \cite[Theorem 2.2 and Proposition 4.4]{zaf}.

\begin{proposition}\label{tinv}
Let $D$ be an integral domain with quotient field $K$.  Every $t$-invertible $t$-prime of $D$ is $t$-maximal and lies in $\Ass(K/D)$.  Moreover, a $t$-maximal ideal $\ppp$ of $D$ is $t$-invertible
if and only if $\ppp D_\ppp$ is principal and $\ppp = I_v$ for some finitely generated ideal $I$ of $D$.
\end{proposition}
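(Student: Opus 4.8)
The plan is to reduce everything to one lemma: \emph{every $t$-invertible ideal $I$ of $D$ is $v$-finite}, in the sense that $I_v=A_v$ and $I^{-1}=C_v$ for some finitely generated $A\subseteq I$ and some finitely generated fractional ideal $C\subseteq I^{-1}$ with $(AC)_v=D$. To prove this: since $(II^{-1})_t=D$, there is a finitely generated $J\subseteq II^{-1}$ with $1\in J_v$; writing each generator of $J$ as a finite sum $\sum_\ell a_\ell b_\ell$ with $a_\ell\in I$, $b_\ell\in I^{-1}$, and letting $A$, $C$ be generated by the $a_\ell$, $b_\ell$ respectively, one has $J\subseteq AC\subseteq II^{-1}\subseteq D$, hence $(AC)_v=D$. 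As $CI\subseteq I^{-1}I\subseteq D$ gives $ACI\subseteq A$, it follows that $I_v=(I(AC)_v)_v=(IAC)_v\subseteq A_v\subseteq I_v$, so $I_v=A_v$; symmetrically, $AI^{-1}\subseteq I^{-1}I\subseteq D$ gives $AI^{-1}C\subseteq C$, whence $I^{-1}=(I^{-1})_v=(AI^{-1}C)_v\subseteq C_v\subseteq I^{-1}$, i.e. $I^{-1}=C_v$. In particular a $t$-invertible $t$-ideal $I$ is divisorial, since $I=I_t\supseteq A_t=A_v=I_v\supseteq I$.

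For the membership $\ppp\in\Ass(K/D)$, apply the lemma: $\ppp^{-1}=C_v$ with $C=(c_1d_1^{-1},\dots,c_md_m^{-1})$, $c_j,d_j\in D$, so, $\ppp$ being divisorial,
$$\ppp=(\ppp^{-1})^{-1}=(C_v)^{-1}=C^{-1}=\bigcap_{j=1}^m (d_jD:_D c_jD),$$
an intersection of conductor ideals each containing $\ppp$. Since $\ppp$ is prime and contains the product $\prod_j(d_jD:_D c_jD)$, one of these conductors is contained in — hence equal to — $\ppp$, so $\ppp$ is a conductor ideal and lies in $\Ass(K/D)$.

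The two remaining statements use one device: if $\qqq$ is a proper prime $t$-ideal and $(\ppp\ppp^{-1})_t=D$, then $\ppp\ppp^{-1}\not\subseteq\qqq$ (otherwise its $t$-closure would lie in $\qqq$), so expanding an element of $\ppp\ppp^{-1}\setminus\qqq$ we find $a\in\ppp$, $b\in\ppp^{-1}$ with $ab\notin\qqq$; for $p\in\ppp$ then $bp\in\ppp^{-1}\ppp\subseteq D$, and the identity $p=a\cdot(bp)(ab)^{-1}$ shows $p\in aD_\qqq$ (as $ab$ is a unit of $D_\qqq$), so $\ppp D_\qqq=aD_\qqq$ is principal. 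For the criterion of the second sentence: if a $t$-maximal $\mm$ is $t$-invertible, the lemma makes it $v$-finite and the device (with $\qqq=\mm$) makes $\mm D_\mm$ principal. Conversely, suppose $\mm=I_v$ with $I$ finitely generated and $\mm D_\mm=sD_\mm$ principal; then $\mm^{-1}=I^{-1}$, and from $ID_\mm\subseteq\mm D_\mm$ with $(ID_\mm)_v=(I_v)D_\mm=\mm D_\mm$ one gets $ID_\mm\subseteq s^nD_\mm$ for all $n$, forcing $ID_\mm=\mm D_\mm$ (else, taking $v$-closures, $s$ is a unit). Thus $ID_\mm$ is invertible in $D_\mm$; were $\mm\mm^{-1}\subseteq\mm$ we would get $II^{-1}=I\mm^{-1}\subseteq\mm\mm^{-1}\subseteq\mm$, localizing to $(ID_\mm)(ID_\mm)^{-1}\subseteq\mm D_\mm\subsetneq D_\mm$, which is absurd; hence $\mm\mm^{-1}\not\subseteq\mm$, and since $(\mm\mm^{-1})_t$ is a $t$-ideal containing but not equal to the $t$-maximal ideal $\mm$, it equals $D$, i.e. $\mm$ is $t$-invertible.

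It remains to show that a $t$-invertible $t$-prime $\ppp$ is $t$-maximal. If not, $\ppp\subsetneq\mm$ for some $t$-maximal $\mm$, and the device yields a principal prime $\ppp D_\mm=aD_\mm$ of $D_\mm$ properly contained in $\mm D_\mm$. When $\mm$ is itself $t$-invertible, the criterion just proved gives $\mm D_\mm=sD_\mm$ principal, and $aD_\mm\subsetneq sD_\mm$ then forces $a\in s^nD_\mm$ for all $n$; writing $a=s^nw$ and using that $aD_\mm$ is prime with $s\notin aD_\mm$ (as $s\in aD_\mm$ would give $\mm D_\mm\subseteq\ppp D_\mm$), one gets $w\in aD_\mm$, hence $s$ is a unit of $D_\mm$ — contradicting that $\mm D_\mm$ is proper. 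I expect the genuine obstacle to be the opposite case, in which $\mm$ is not $t$-invertible, so that $\mm^{-1}=(\mm:_D\mm)$ is an overring of $D$: there one must exploit that $\mm$ is a $t$-ideal of $D$ — not merely that $\mm D_\mm$ is the maximal ideal of $D_\mm$ — to exclude a principal prime sitting strictly inside $\mm$. This case is the technical core of \cite[Theorem 2.2]{zaf}.
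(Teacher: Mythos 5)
Most of what you do is sound, and is in fact more self-contained than the paper, which gives no proof at all and simply quotes \cite[Theorem 2.2 and Proposition 4.4]{zaf}: your $v$-finiteness lemma, the conductor argument for $\ppp\in\Ass(K/D)$, and both directions of the criterion for a $t$-maximal ideal to be $t$-invertible are correct. (In the converse direction, the asserted identity $(ID_\mm)_v=I_vD_\mm$ is not a general fact about localization, but it is harmless here: since $I$ is finitely generated, $(ID_\mm)^{-1}=I^{-1}D_\mm$, whence $I_vD_\mm\subseteq(ID_\mm)_v$, and $(ID_\mm)_v\subseteq sD_\mm$ because $sD_\mm$ is divisorial; that is all you use.) The genuine gap is the first assertion, that a $t$-invertible $t$-prime $\ppp$ is $t$-maximal: you only treat the case in which the $t$-maximal ideal $\mm\supsetneq\ppp$ is itself $t$-invertible, and you explicitly defer the remaining case to Zafrullah's Theorem 2.2 --- precisely the reference the proposition is being quoted from --- so this part is not proved. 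Moreover, the route you chose (derive a contradiction inside $D_\mm$ from a principal prime $\ppp D_\mm$ strictly below $\mm D_\mm$) cannot succeed by purely local reasoning: when $\mm$ is not $t$-invertible, $\mm D_\mm$ need not be a $t$-ideal of $D_\mm$, and a local domain can certainly contain a nonzero principal prime strictly inside its maximal ideal (e.g.\ $k[x,y]_{(x,y)}$), so the obstruction must be used globally.

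The missing case does admit a short global argument from tools you already have. Let $a\in D\setminus\ppp$; it suffices to prove $(\ppp+aD)_t=D$, for then any proper $t$-ideal $I\supseteq\ppp$ must equal $\ppp$ (if $a\in I\setminus\ppp$, then $D=(\ppp+aD)_t\subseteq I_t=I$), which is $t$-maximality. By your lemma, $\ppp_v=A_v$ with $A\subseteq\ppp$ finitely generated, so $(A+aD)_v\subseteq(\ppp+aD)_t$ and it is enough to show $(A+aD)^{-1}=D$; since $(A+aD)^{-1}=A^{-1}\cap a^{-1}D=\ppp^{-1}\cap a^{-1}D$, this means: $x\in\ppp^{-1}$ and $xa\in D$ force $x\in D$. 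If $x\ppp\not\subseteq\ppp$, choose $p\in\ppp$ with $xp\in D\setminus\ppp$; then $(xp)a=(xa)p\in\ppp$ while $xp,a\notin\ppp$, contradicting primality of $\ppp$. Hence $x\ppp\subseteq\ppp$, so $x(\ppp\ppp^{-1})\subseteq\ppp\ppp^{-1}\subseteq D$; picking a finitely generated $J\subseteq\ppp\ppp^{-1}$ with $1\in J_v$ (possible because $(\ppp\ppp^{-1})_t=D$) and using that $x^{-1}D$ is divisorial gives $x\in xJ_v\subseteq D$. This closes the gap and renders your case distinction on whether $\mm$ is $t$-invertible unnecessary.
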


Every $t$-invertible ideal of $D$ is $v$-invertible.  If the converse holds, then $D$ is said to be an {\it H domain}.  We have the following implications: Noetherian $\Rightarrow$ Mori $\Rightarrow$ TV $\Rightarrow$ H.  By \cite[Proposition 4.2]{zaf}, we have the following.

\begin{proposition}\label{Hdomains}
For any domain $D$, the following conditions are equivalent.
\begin{enumerate}
\item $D$ is an H domain, that is, every $v$-invertible ideal of $D$ is $t$-invertible.
\item Every $t$-maximal ideal of $D$ is $v$-maximal.
\item Every $t$-maximal ideal of $D$ is a conductor ideal of $D$.
\item For any nonzero ideal $I$ of $D$, one has $I_t = D$ if and only if $I_v = D$.
\end{enumerate}
\end{proposition}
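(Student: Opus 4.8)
The plan is to route all four equivalences through condition (2), which is the most concrete. Throughout I will use freely that $I_t \subseteq I_v$ for every nonzero fractional ideal $I$, that every $v$-ideal is a $t$-ideal, that every proper $t$-ideal is contained in a $t$-maximal ideal, and that $t$- and $v$-maximal ideals are prime. The recurring observation is that \emph{a $t$-maximal ideal $\ppp$ satisfying $\ppp = \ppp_v$ is automatically $v$-maximal}, since a proper $v$-ideal strictly containing $\ppp$ would be a proper $t$-ideal strictly containing $\ppp$. Using this, (3) $\Rightarrow$ (2) is quick: a conductor ideal $(aD :_D bD) = (a/b)D \cap D$ is an intersection of principal fractional ideals, hence a $v$-ideal, so under (3) every $t$-maximal ideal is a $v$-ideal and the observation applies. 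For (2) $\Rightarrow$ (3), take a $t$-maximal $\ppp$, necessarily $v$-maximal by (2); then $\ppp = \ppp_v \subsetneq D$ forces $\ppp^{-1} \supsetneq D$, and choosing $b/a \in \ppp^{-1} \setminus D$ one gets $\ppp \subseteq (aD :_D bD) \subsetneq D$, so $v$-maximality yields $\ppp = (aD :_D bD)$.

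For (2) $\Leftrightarrow$ (4): the implication $I_t = D \Rightarrow I_v = D$ is automatic from $I_t \subseteq I_v$, so (4) says only that $I_v = D$ forces $I_t = D$. Assuming (2), if $I_v = D$ but $I_t \neq D$, then $I_t$ lies in some $t$-maximal $\ppp$, which is $v$-maximal, so $I \subseteq \ppp$ gives $D = I_v \subseteq \ppp_v = \ppp \subsetneq D$, a contradiction. Conversely, assuming (4), for a $t$-maximal $\ppp$ the $v$-ideal $\ppp_v$ is a $t$-ideal containing $\ppp$, so $\ppp_v \in \{\ppp, D\}$; since $\ppp_v = D$ would give $\ppp = \ppp_t = D$ by (4), we must have $\ppp = \ppp_v$, and the observation finishes.

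Finally (1) $\Leftrightarrow$ (2). If $I$ is $v$-invertible then $(II^{-1})_v = D$, and (4)---now available from (2)---gives $(II^{-1})_t = D$, i.e.\ $I$ is $t$-invertible; this is (2) $\Rightarrow$ (1). For (1) $\Rightarrow$ (2), take a $t$-maximal $\ppp$; as above it suffices to exclude $\ppp_v = D$. If $\ppp_v = D$, then $\ppp \subseteq \ppp\ppp^{-1} \subseteq D$ forces $(\ppp\ppp^{-1})_v = D$, so $\ppp$ is $v$-invertible, hence $t$-invertible by (1); but $\ppp$ is a $t$-prime, so Proposition \ref{tinv} yields $\ppp = J_v$ for some finitely generated $J$, whence $\ppp = \ppp_v$, contradicting $\ppp_v = D \neq \ppp$. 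I expect this last step to be the main obstacle: ``$v$-invertible'' is genuinely weaker than ``divisorial'', so the only leverage I see for recovering $\ppp = \ppp_v$ from the $t$-invertibility of $\ppp$ is the structural description of $t$-invertible $t$-primes in Proposition \ref{tinv}; everything else is bookkeeping with the two star operations.
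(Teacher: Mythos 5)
Your argument is correct, and there is nothing in the paper to measure it against directly: the paper does not prove Proposition \ref{Hdomains} at all, but simply quotes it from Zafrullah's ``Putting $t$-invertibility to use'' (Proposition 4.2 there). Checking your steps: the opening observation (a divisorial $t$-maximal ideal is $v$-maximal, since $v$-ideals are $t$-ideals) is sound; the fact that a conductor $(aD:_D bD)=(a/b)D\cap D$ is divisorial, being a nonzero intersection of principal (hence divisorial) fractional ideals, is the standard characterization $I_v=\bigcap\{xD: I\subset xD\}$ and is fine to use; the equivalences $(2)\Leftrightarrow(3)$ and $(2)\Leftrightarrow(4)$ and the implication $(2)\Rightarrow(1)$ are straightforward bookkeeping exactly as you present them. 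The only step with real content is $(1)\Rightarrow(2)$: from $\ppp_v=D$ you get $v$-invertibility, hence $t$-invertibility of $\ppp$, and you must then recover $\ppp=\ppp_v$; your appeal to Proposition \ref{tinv} (which precedes Proposition \ref{Hdomains} and is itself only cited, so no circularity) handles this correctly via $\ppp=I_v$ for a finitely generated $I$. Two equivalent alternatives, closer to the standard proofs in the literature: use the first assertion of Proposition \ref{tinv}, that a $t$-invertible $t$-prime lies in $\Ass(K/D)$, i.e.\ equals a conductor ideal and is therefore divisorial (this even gives (3) directly); or use the standard lemma that a $t$-invertible $t$-ideal is of finite type, so $\ppp=J_t=J_v$ for some finitely generated $J\subset\ppp$. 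Either way, your decomposition through condition (2) is a legitimate, self-contained reconstruction of the cited result.
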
 

A domain $D$ is said to be a {\it Pr\"ufer $v$-multiplication domain}, or {\it PVMD}, if every nonzero finitely generated ideal of $D$ is $t$-invertible, or equivalently (as is well-known) if $D_\ppp$ is a valuation domain for every $t$-maximal ideal $\ppp$ of $D$.  For example, any Krull domain or Pr\"ufer domain is a PVMD.  A domain $D$ is said to be {\it of finite $t$-character} if every nonzero element of $D$ is contained in only finitely many $t$-maximal ideals of $D$.  By \cite[Theorem 1.3 and Proposition 2.4]{hou}, every TV domain is an H domain of finite $t$-character.  (We do not know if the converse is true.)  

\begin{proposition}\label{HPVMD1} A PVMD $D$ is an H domain if and only if every $t$-maximal ideal of $D$ is $t$-invertible.
\end{proposition}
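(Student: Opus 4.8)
The plan is to prove the two implications separately; only the forward implication will use that $D$ is a PVMD, the reverse one being a quick consequence of the two propositions just stated. For the implication ``$\Leftarrow$'', suppose every $t$-maximal ideal of $D$ is $t$-invertible. Each $t$-maximal ideal is a prime $t$-ideal, hence a $t$-prime, so by Proposition \ref{tinv} it lies in $\Ass(K/D)$ and is therefore equal to a conductor ideal of $D$. Since condition (3) of Proposition \ref{Hdomains} then holds, $D$ is an H domain, as desired.

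For the implication ``$\Rightarrow$'', suppose $D$ is a PVMD and an H domain, and let $\ppp$ be a $t$-maximal ideal of $D$; the goal is to show $\ppp$ is $t$-invertible. First I would use Proposition \ref{Hdomains} to write $\ppp = (aD :_D bD)$ for some nonzero $a \in D$ and some $b \in D$ (necessarily nonzero, as $\ppp \neq D$). Two elementary remarks get the argument off the ground: since $b \in D$ we have $ab \in aD$, so $a \in \ppp$; and since $bx \in aD$ for every $x \in \ppp$, we have $(b/a)\ppp \subseteq D$, i.e.\ $b/a \in \ppp^{-1}$. Next, since $D$ is a PVMD and $\ppp$ is $t$-maximal, $D_\ppp$ is a valuation domain; let $v$ denote its valuation. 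Because forming the colon $(aD :_D bD)$ commutes with localization (the second argument being finitely generated), $\ppp D_\ppp = (aD_\ppp :_{D_\ppp} bD_\ppp) = \{x \in D_\ppp : v(x) \geq v(a) - v(b)\}$. As this is a proper ideal of $D_\ppp$, the value $\delta := v(a) - v(b)$ is positive, and since $\ppp D_\ppp$ is also the set of nonunits of $D_\ppp$, $\delta$ must be the least positive value attained by $v$; in particular $\ppp D_\ppp$ is principal, generated by any element of value $\delta$. Writing $a/b = d/s$ with $d \in D$ and $s \in D \setminus \ppp$ (possible since $v(a/b) = \delta > 0$), we get $v(d) = \delta$, whence $d \in \ppp D_\ppp \cap D = \ppp$.

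With $d$ and $b/a$ in hand the conclusion is short. To prove $\ppp$ $t$-invertible it suffices to show $(\ppp \ppp^{-1})_t = D$, equivalently — since every proper $t$-ideal lies in a $t$-maximal ideal — that $\ppp\ppp^{-1}$ is contained in no $t$-maximal ideal. Since $1 \in \ppp^{-1}$ we have $\ppp \subseteq \ppp\ppp^{-1}$, so any $t$-maximal ideal containing $\ppp\ppp^{-1}$ contains $\ppp$ and hence equals $\ppp$; thus it is enough to show $\ppp\ppp^{-1} \not\subseteq \ppp$. But $d(b/a)$ lies in $\ppp\ppp^{-1}$ (as the product of $d \in \ppp$ and $b/a \in \ppp^{-1}$, and it lies in $D$ because $(b/a)\ppp \subseteq D$), while $v\big(d(b/a)\big) = v(d) + v(b) - v(a) = \delta - \delta = 0$, so $d(b/a)$ is a unit of $D_\ppp$ and therefore is not in $\ppp D_\ppp \cap D = \ppp$. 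Hence $\ppp\ppp^{-1} \not\subseteq \ppp$, so $\ppp$ is $t$-invertible.

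The one step calling for insight is the decision, in the forward direction, to exploit the H-domain hypothesis through the explicit conductor presentation $\ppp = (aD :_D bD)$ rather than through any of the other reformulations in Proposition \ref{Hdomains}: it is this presentation that simultaneously furnishes an element $d$ of $\ppp$ of minimal positive $\ppp$-value and an element $b/a$ of $\ppp^{-1}$ of $\ppp$-value $-\delta$, whose product is a unit at $\ppp$ — the fact that directly forces $\ppp$ to be $t$-invertible. The PVMD hypothesis is used only to make $D_\ppp$ a valuation domain so that these value computations are available; everything else is formal. One could instead route the argument through Proposition \ref{tinv} by separately verifying that $\ppp = I_v$ for some finitely generated $I$, but the computation above makes that step unnecessary.
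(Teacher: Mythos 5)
Your proof is correct; the converse direction coincides with the paper's (a $t$-invertible $t$-maximal ideal lies in $\Ass(K/D)$ by Proposition \ref{tinv}, hence is a conductor ideal, and condition (3) of Proposition \ref{Hdomains} gives the H property), but your forward direction takes a genuinely different route. The paper stays at the level of ideals: writing $\ppp = (aD:_D bD)$, it observes that this equals $(D+\frac{b}{a}D)^{-1}$, invokes the definition of a PVMD to conclude that the finitely generated fractional ideal $D+\frac{b}{a}D$ is $t$-invertible, and then uses the fact that the inverse of a $t$-invertible ideal is $t$-invertible. You instead use the other characterization of PVMDs --- $D_\ppp$ is a valuation domain for every $t$-maximal $\ppp$ --- and extract from the same conductor presentation an element $d \in \ppp$ of minimal positive value $\delta$ together with $b/a \in \ppp^{-1}$ of value $-\delta$, so that $d(b/a) \in \ppp\ppp^{-1}$ is a unit of $D_\ppp$; since any $t$-maximal ideal containing $\ppp\ppp^{-1}$ must be $\ppp$ itself, this forces $(\ppp\ppp^{-1})_t = D$. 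All the individual steps check out (the colon does commute with localization here since $bD$ is principal, and $\ppp D_\ppp \cap D = \ppp$ as $\ppp$ is prime). Your argument is longer and more computational, but it is self-contained at the level of elements --- it needs neither the identity $(aD:_D bD)=(D+\frac{b}{a}D)^{-1}$ nor the transfer of $t$-invertibility to inverses --- and it exhibits $\ppp D_\ppp$ as principal along the way, which is exactly the local datum Proposition \ref{tinv} predicts; the paper's route buys brevity by working purely formally with the $t$-invertibility definition of a PVMD.
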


\begin{proof}
Suppose that $D$ is an H domain, and let $\ppp$ be a $t$-maximal ideal of $D$.  Then $\ppp = (aD:_D bD) = (D+\frac{b}{a}D)^{-1}$ for some nonzero elements $a$ and $b$ of $D$.  Since the fractional ideal $D+\frac{b}{a}D$ of the PVMD is finitely generated, it is $t$-invertible.  It follows that $\ppp = (D+\frac{b}{a}D)^{-1}$ is $t$-invertible as well.  The converse follows from the fact that every $t$-invertible $t$-prime ideal is a conductor ideal, which follows from Proposition \ref{tinv}.
\end{proof}

\subsection{$t$-linked extensions}

As noted in the introduction, an extension $A$ (not necessarily an overring) of a domain $D$ is said to be {\it $t$-linked} if $I_t = D$ implies $(IA)_t = A$ for any nonzero ideal $I$ of $D$.   The proofs of \cite[Propositions 2.1 and 2.13]{dob} yield the following equivalent characterizations of the $t$-linked extensions of a domain $D$.  (Alternatively, see \cite[Propositions 3.4, 3.5, and 4.6]{ell2}.)

\begin{proposition}\label{tlinkedextensions}
For any extension $A \supset D$ of domains, the following conditions are equivalent.
\begin{enumerate}
\item $A$ is a $t$-linked extension of $D$.
\item For every $\qqq \in \TS(A)$ one has $\qqq \cap D = 0$ or $(\qqq \cap D)_t \neq D$.
\item For every $\qqq \in \TM(A)$ one has $\qqq \cap D \subset \ppp$ for some $\ppp \in \TM(D)$.
\item $A = \bigcap_{\ppp \in \TM(D)} A_\ppp$.
\end{enumerate}
\end{proposition}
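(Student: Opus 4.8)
The plan is to prove the cycle (1) $\Rightarrow$ (2) $\Rightarrow$ (3) $\Rightarrow$ (1), and then to fold in (4) by showing (4) $\Rightarrow$ (1) and (1) $\Rightarrow$ (4). Write $L$ for the quotient field of $A$, so that $D \subseteq A \subseteq L$ and $K \subseteq L$; I will freely use the elementary facts recorded in Sections 1.2--1.3, namely that a finitely generated ideal $J$ satisfies $J_t = J_v$, that every proper $t$-ideal lies in some $t$-maximal ideal, that $t$-maximal ideals are prime $t$-ideals, and that $\TM(D) \neq \emptyset$ since $D$ is not a field. For (1) $\Rightarrow$ (2): if $\qqq \in \TS(A)$ and $I := \qqq \cap D \neq 0$ satisfied $I_t = D$, then $t$-linkedness would give $(IA)_t = A$, while $IA \subseteq \qqq$ forces $(IA)_t \subseteq \qqq_t = \qqq \subsetneq A$, a contradiction; hence $I_t \neq D$. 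For (2) $\Rightarrow$ (3): a $t$-maximal ideal $\qqq$ of $A$ lies in $\TS(A)$, so either $\qqq \cap D = 0$ (and then any $\ppp \in \TM(D)$ contains it) or $(\qqq \cap D)_t$ is a proper $t$-ideal of $D$, hence lies in some $\ppp \in \TM(D)$, so $\qqq \cap D \subseteq \ppp$. For (3) $\Rightarrow$ (1): if a nonzero ideal $I$ of $D$ has $I_t = D$ but $(IA)_t \neq A$, then $IA \subseteq \qqq$ for some $\qqq \in \TM(A)$, so $0 \neq I \subseteq \qqq \cap D \subseteq \ppp$ for some $\ppp \in \TM(D)$ by (3), forcing the contradiction $D = I_t \subseteq \ppp$.

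For (4) $\Rightarrow$ (1), let $I$ be a nonzero ideal of $D$ with $I_t = D$. I would choose a finitely generated ideal $J = (a_1, \ldots, a_n) \subseteq I$ with $J_v = D$ (possible since $1 \in I_t$). Since $J$ is finitely generated, $J_t = J_v = D$, so $J$ is contained in no $t$-maximal ideal of $D$; that is, for each $\ppp \in \TM(D)$ some generator $a_i$ lies outside $\ppp$. Then for any $y \in (A :_L JA)$ one has $a_i y \in A$ with $a_i \in D \setminus \ppp$ for a suitable $i$, so $y \in A_\ppp$; as this holds for every $\ppp \in \TM(D)$, hypothesis (4) gives $y \in A$. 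Thus $(JA)^{-1} = A$, hence $(JA)_v = A$; and since $JA \subseteq IA$ is finitely generated, $(IA)_t \supseteq (JA)_t = (JA)_v = A$, whence $(IA)_t = A$.

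For (1) $\Rightarrow$ (4), the inclusion $A \subseteq \bigcap_{\ppp \in \TM(D)} A_\ppp$ is automatic, so let $x$ lie in the right-hand intersection; I may assume $x \neq 0$. Set $\aaa = (A :_D xA) = \{d \in D : dx \in A\}$. For each $\ppp \in \TM(D)$ there is $s \in D \setminus \ppp$ with $sx \in A$, so $s \in \aaa \setminus \ppp$; hence $\aaa$ is a nonzero ideal of $D$ contained in no $t$-maximal ideal, so $\aaa_t = D$, and (1) gives $(\aaa A)_t = A$. Now $\bbb := (A :_A xA) = A \cap x^{-1}A$ is an intersection of principal fractional ideals of $A$, hence divisorial (a principal fractional ideal equals its own $v$-closure, and an intersection of $v$-ideals is a $v$-ideal), so $\bbb_t \subseteq \bbb_v = \bbb$. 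Since $\aaa A \subseteq \bbb$, this yields $A = (\aaa A)_t \subseteq \bbb_t \subseteq \bbb \subseteq A$, so $\bbb = A$, and therefore $x = 1 \cdot x \in A$.

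The step I expect to be the main obstacle is exactly this last one: one cannot in general pass from $\bbb_t = A$ to $\bbb = A$, and the whole point is that the conductor $\bbb = (A :_A xA)$ is divisorial, so that $t$-closure may there be replaced by the coarser $v$-closure. The only other mildly delicate point is the reduction, in (4) $\Rightarrow$ (1), of the hypothesis $I_t = D$ to a single finitely generated $J \subseteq I$ with $J_v = D$, together with the observation that such a $J$ escapes every $t$-maximal ideal of $D$.
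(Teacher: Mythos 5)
Your proof is correct. Note that the paper does not actually prove this proposition: it is quoted from the literature, with the remark that the proofs of \cite[Propositions 2.1 and 2.13]{dob} (stated there for overrings) carry over to arbitrary extensions, or alternatively \cite[Propositions 3.4, 3.5, 4.6]{ell2}. Your argument is essentially that standard one, worked out directly in the general (not necessarily overring) setting: the cycle $(1)\Rightarrow(2)\Rightarrow(3)\Rightarrow(1)$ via the facts that $t$-maximal ideals are prime $t$-ideals and that every proper $t$-ideal lies under a $t$-maximal ideal; $(4)\Rightarrow(1)$ by extracting a finitely generated $J\subseteq I$ with $J_v=D$ and computing $(JA)^{-1}=A$ from the intersection $\bigcap_{\ppp\in\TM(D)}A_\ppp$; and $(1)\Rightarrow(4)$ via the conductor $(A:_A xA)=A\cap x^{-1}A$, whose divisoriality is exactly what lets you pass from $t$-closure back to the ideal itself. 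The two points worth a word, both fine: the case $\qqq\cap D=0$ in $(2)\Rightarrow(3)$ and the nonvanishing of $\aaa$ in $(1)\Rightarrow(4)$ use $\TM(D)\neq\emptyset$, which is covered by the paper's standing convention $K\neq D$; and the fact that a nonzero intersection of principal (hence divisorial) fractional ideals is divisorial is not in the paper's preliminaries, but it is immediate from $I_v\subseteq (zA)_v=zA$ for each principal $zA\supseteq I$, so your use of it is unobjectionable.
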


Next, we note that flat extensions are $t$-linked.  In fact, we have the following.

\begin{proposition}\label{tlinkedlemma}
Let $A$ be a $t$-linked extension of an integral domain $D$.  Then we have the following.
\begin{enumerate}
\item[(a)] If $B$ is a $t$-linked extension of $A$, then $B$ is a $t$-linked extension of $D$.
\item[(b)] If $B$ is a flat extension of $D$, then both $B$ and $A \otimes_D B$ are $t$-linked extensions of $D$.
\end{enumerate}
\end{proposition}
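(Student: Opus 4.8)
The plan is to obtain (a) directly from the definition of $t$-linkedness, and to reduce (b) to the case of a flat extension, which in turn will rest on one standard lemma about inverses of ideals under flat base change.

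For (a), let $I$ be a nonzero ideal of $D$ with $I_t = D$. Since $A$ is $t$-linked over $D$ we have $(IA)_t = A$, and $IA$ is a nonzero ideal of $A$; since $B$ is $t$-linked over $A$ we then get $\bigl((IA)B\bigr)_t = B$. As $A \subseteq B$ gives $(IA)B = IB$, this says $(IB)_t = B$, so $B$ is $t$-linked over $D$.

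For the first assertion of (b) I would first observe that it suffices to check the defining implication on finitely generated ideals: if $I_t = D$ then $1 \in J_v$ for some nonzero finitely generated ideal $J \subseteq I$, whence $J_v = D$, and since $JB \subseteq IB$ it is enough to see that $(JB)_t = B$. So let $I = (a_1,\dots,a_n)$ be a nonzero finitely generated ideal of $D$ with $I^{-1} = D$. The key point is that $I^{-1}B = (IB)^{-1}$, with inverses taken in the respective quotient fields; granting this, $(IB)^{-1} = DB = B$, so $(IB)_v = B$, and hence $(IB)_t = B$ because $IB$ is finitely generated. To prove $I^{-1}B = (IB)^{-1}$, write $a_1 I^{-1} = (a_1 D :_D I) = \bigcap_{i=1}^n (a_1 D :_D a_i D)$, note that each $(a_1 D :_D a_i D)$ is the kernel of the multiplication-by-$a_i$ map $D \to D/a_1 D$, and use that flatness of $B$ over $D$ commutes with kernels and with finite intersections of submodules to identify $a_1 I^{-1} \otimes_D B$ (realized inside $B$) with $\bigcap_{i=1}^n (a_1 B :_B a_i B) = a_1 (IB)^{-1}$; cancelling $a_1$ gives the claim.

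For the second assertion of (b), flatness of $B$ over $D$ makes the base change $A \otimes_D B$ flat over $A$, so by the case just proved (with $A$ in the role of $D$) the ring $A \otimes_D B$ is a $t$-linked extension of $A$, and then part (a) shows it is a $t$-linked extension of $D$. The step I expect to require the most care is the lemma $I^{-1}B = (IB)^{-1}$: one cannot simply invoke that $\Hom$ commutes with flat base change, since a finitely generated ideal of a non-Noetherian domain need not be finitely presented, so the argument must genuinely pass through the explicit presentation of $a_1 I^{-1}$ as an intersection of kernels of maps between finitely generated free modules. (We also use implicitly that $A \otimes_D B$ is a domain, so that the phrase ``$t$-linked extension'' applies to it.)
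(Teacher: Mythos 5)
Your proof is correct, and its skeleton coincides with the paper's: transitivity for (a), then ``flat implies $t$-linked'' applied to $B$, and for $A \otimes_D B$ flat base change over $A$ followed by (a). The difference is in how the two ingredients are justified. The paper disposes of (a) by appeal to the equivalent characterizations of $t$-linkedness (Proposition \ref{tlinkedextensions}) and of ``flat implies $t$-linked'' by citing Zafrullah's remark that flat extensions are $t$-compatible, hence $t$-linked; you instead prove (a) directly from the definition (via $(IA)B = IB$, which is cleaner than routing through the characterization) and prove the flatness step from scratch: reduce to a finitely generated ideal $J$ with $J_v = D$, then establish $(JB)^{-1} = J^{-1}B$ by writing $a_1 J^{-1}$ as a finite intersection of kernels of multiplication maps and using that tensoring with a flat module commutes with kernels and finite intersections. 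That key identity is precisely the paper's later Lemma \ref{conjlem1} (stated there as ``well-known'' and following from $IM \cap JM = (I\cap J)M$ for flat $M$), so your route in effect derives (b) from that lemma rather than from the $t$-compatibility literature; your remark about finitely generated versus finitely presented ideals correctly identifies why the explicit intersection-of-kernels argument is needed in the non-Noetherian setting. You also make explicit the assumption, left implicit in the paper's statement, that $A \otimes_D B$ is a domain (equivalently, that it is identified with its image, e.g.\ the compositum $AB$, when one speaks of it as an extension). What the paper's version buys is brevity by citation; what yours buys is a self-contained and elementary argument. No gaps.
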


\begin{proof}
Statement (a) is clear from Proposition \ref{tlinkedextensions}.  If $B$ is a flat extension of $D$, it follows from Remark (a) following \cite[Proposition 2.6]{zaf} that $B$ is a $t$-linked extension of $D$.  Moreover, $A \otimes_D B$ is a flat and therefore $t$-linked extension of $A$.  Thus, since $A$ is a $t$-linked extension of $D$, it follows from (a) that $A \otimes_D B$ is a $t$-linked extension of $D$.
\end{proof}

An extension $A$ of a domain $D$ is {\it locally $t$-linked} if $A_\ppp$ is a $t$-linked extension of $D_\ppp$ for every prime ideal $\ppp$ of $D$.  For example, any flat extension of domains is locally $t$-linked.  By \cite[Corollary 4.11]{ell2}, any locally $t$-linked extension is $t$-linked.  However, the converse is not true.  For example, by \cite[Example 4.12]{ell2}, any valuation overring of the integral domain $A = \QQ+(X,Y,Z)\QQ(\sqrt{2})[[X,Y,Z]]$ centered on the prime ideal $(X,Y)\QQ(\sqrt{2})[[X,Y,Z]]$  is a $t$-linked but not a locally $t$-linked extension of $A$. 

By \cite[Corollary 4.11]{ell2}, we have the following.

\begin{proposition}\label{locallytlinked}
Let $D$ be an integral domain with quotient field $K$.  For any extension $A \supset D$ of domains, the following conditions are equivalent.
\begin{enumerate}
\item $A$ is a locally $t$-linked extension of $D$.
\item For every $t$-localizing prime $\qqq$ of $A$, either $\qqq \cap D = 0$ or $\qqq \cap D$ is a $t$-localizing prime of $D$.
\item $A_\ppp = \bigcap_{\ppp \supset \qqq \in \sKr(K/D)} A_\qqq$ for every prime ideal $\ppp$ of $D$.
\end{enumerate}
Moreover, these conditions imply that $A = \bigcap_{\ppp \in \sKr(K/D)} A_\ppp$ and therefore that $A$ is a $t$-linked extension of $D$.
\end{proposition}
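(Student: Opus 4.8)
The plan is to prove the implications (1) $\Rightarrow$ (2) $\Rightarrow$ (3) $\Rightarrow$ (1) and then to read off the ``moreover'' clause from (3). Two preliminary observations carry most of the weight. First, by \cite[Proposition 1.1]{zaf1} the set $\sKr(L/B)$ is exactly the set of $t$-localizing primes of a domain $B$ with quotient field $L$, so one is free to pass back and forth between ``$t$-localizing prime'' and ``strong Krull prime of $L/B$''. Second, I would first establish, for $L=\operatorname{Frac}(A)$ and every prime $\ppp$ of $D$, the ``always true'' intersection formula
\[
A_\ppp \;=\; \bigcap_{\qq\,\in\,\sKr(L/A),\ \qq\cap D\,\subseteq\,\ppp} A_\qq .
\]
Here $\subseteq$ is immediate, since $\qq\cap D\subseteq\ppp$ forces $D\setminus\ppp\subseteq A\setminus\qq$. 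For $\supseteq$ I would argue contrapositively: if $z\in L$ lies outside $A_\ppp$, then $(A:_A zA)$ is a conductor ideal of $A$ whose contraction to $D$ is contained in $\ppp$, so its extension to $A_\ppp$ is a proper ideal; pulling back a minimal prime of this extension gives a prime $\qq$ of $A$ that is minimal over $(A:_A zA)$, hence lies in $\wAss(L/A)\subseteq\sKr(L/A)$, satisfies $\qq\cap D\subseteq\ppp$, and excludes $z$.

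For (1) $\Rightarrow$ (2): given a $t$-localizing prime $\qq$ of $A$ with $\qqq:=\qq\cap D\neq 0$, I would localize at $D\setminus\qqq$ and note that $(A_\qqq)_{\qq A_\qqq}=A_\qq$; since $\qq$ is $t$-localizing in $A$, $\qq A_\qq$ is a $t$-ideal of $A_\qq$, so by \cite{kang} its contraction $\qq A_\qq\cap A_\qqq=\qq A_\qqq$ is a $t$-prime of $A_\qqq$. As $A_\qqq\supset D_\qqq$ is $t$-linked by (1), Proposition \ref{tlinkedextensions} yields $(\qqq D_\qqq)_t\neq D_\qqq$; but $D_\qqq$ is local with maximal ideal $\qqq D_\qqq$, so $\qqq D_\qqq$ is a $t$-ideal, i.e.\ $\qqq$ is $t$-localizing in $D$.

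For (2) $\Rightarrow$ (3): fix a prime $\ppp$ of $D$. The inclusion $A_\ppp\subseteq\bigcap_{\qqq\in\sKr(K/D),\,\qqq\subseteq\ppp}A_\qqq$ is automatic, and by the displayed formula it suffices to show $\bigcap_{\qqq\in\sKr(K/D),\,\qqq\subseteq\ppp}A_\qqq\subseteq A_\qq$ for each $\qq\in\sKr(L/A)$ with $\qq\cap D\subseteq\ppp$, and then intersect over such $\qq$. If $\qq\cap D=0$ this holds because $A_\qqq\subseteq A\otimes_D K\subseteq A_\qq$; if $\qqq_0:=\qq\cap D\neq 0$, then (2) makes $\qqq_0$ a $t$-localizing prime of $D$ with $\qqq_0\subseteq\ppp$, so $A_{\qqq_0}$ is one of the intersectands and $A_{\qqq_0}\subseteq A_\qq$. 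For (3) $\Rightarrow$ (1): fix a prime $\ppp$ of $D$ and verify the criterion $A_\ppp=\bigcap_{\PPP\in\TM(D_\ppp)}(A_\ppp)_\PPP$ of Proposition \ref{tlinkedextensions}. For each nonzero $\qqq\in\sKr(K/D)$ with $\qqq\subseteq\ppp$, applying \cite{kang} to $D_\qqq=(D_\ppp)_{\qqq D_\ppp}$ shows $\qqq D_\ppp$ is a $t$-ideal of $D_\ppp$, hence lies in some $\PPP\in\TM(D_\ppp)$, so $\bigcap_{\PPP'\in\TM(D_\ppp)}(A_\ppp)_{\PPP'}\subseteq(A_\ppp)_{\qqq D_\ppp}=A_\qqq$; intersecting over these $\qqq$ and invoking (3) gives $\bigcap_{\PPP'}(A_\ppp)_{\PPP'}\subseteq A_\ppp$, and the reverse inclusion is automatic.

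Finally, assuming (3), I would deduce the ``moreover'' clause as follows. Since $A$ is a torsion-free $D$-module, $A=\bigcap_\mm A_\mm$ with $\mm$ ranging over the maximal ideals of $D$; substituting (3) and collapsing the double intersection gives $A=\bigcap_{\qqq\in\sKr(K/D)}A_\qqq$. As every nonzero $\qqq\in\sKr(K/D)$ is a proper $t$-ideal of $D$, hence lies in a $t$-maximal ideal, this forces $\bigcap_{\qqq\in\sKr(K/D)}A_\qqq\supseteq\bigcap_{\ppp\in\TM(D)}A_\ppp\supseteq A$, so $A=\bigcap_{\ppp\in\TM(D)}A_\ppp$ is a $t$-linked extension of $D$ by Proposition \ref{tlinkedextensions}. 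The step I expect to be the main obstacle is the displayed intersection formula --- and, more broadly, the decision to organize the entire argument around the $t$-localizing primes of $A$ rather than around the $t$-maximal ideals of the localizations $D_\ppp$: because $t$-closure does not commute with localization, one cannot push $t$-maximal ideals around directly, and it is \cite{kang} --- used to shuttle between $D$, $D_\ppp$, and $D_\qqq$ --- that bridges the gap.
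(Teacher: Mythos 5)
Your argument is correct, but there is nothing in the paper itself to measure it against: the paper gives no proof of Proposition \ref{locallytlinked}, quoting it instead from \cite[Corollary 4.11]{ell2}. Read as a self-contained replacement, your route is sound. The load-bearing step, the formula $A_\ppp=\bigcap\{A_\qq:\qq\in\sKr(L/A),\ \qq\cap D\subseteq\ppp\}$, is a correctly proved relative version of $A=\bigcap_{\qq\in\wAss(L/A)}A_\qq$: your prime $\qq$ obtained by pulling back a minimal prime of $(A:_A zA)A_\ppp$ is indeed minimal over the conductor $(A:_A zA)$ in $A$ (any smaller prime containing the conductor is still disjoint from $D\setminus\ppp$), hence lies in $\wAss(L/A)\subset\sKr(L/A)$, meets $D$ inside $\ppp$, and excludes $z$. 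The two transfers of $t$-localizing primes via Kang's contraction theorem --- from $A_\qq=(A_\qqq)_{\qq A_\qqq}$ down to $A_\qqq$ in (1)$\Rightarrow$(2), and from $D_\qqq=(D_\ppp)_{\qqq D_\ppp}$ down to $D_\ppp$ in (3)$\Rightarrow$(1) --- are exactly the right device, and your appeals to conditions (2) and (4) of Proposition \ref{tlinkedextensions} (using that $\qq A_\qqq\cap D_\qqq=\qqq D_\qqq$ is the maximal ideal of $D_\qqq$, and that a proper $t$-ideal of $D_\ppp$ lies in some $t$-maximal ideal) are legitimate. The ``moreover'' clause also follows as you say, since $A=\bigcap_\mm A_\mm$ over maximal ideals of $D$ and every member of $\sKr(K/D)$ is a $t$-prime, hence contained in a $t$-maximal ideal.

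One small point to flag. When the index set $\{\qqq\in\sKr(K/D):\qqq\subseteq\ppp\}$ is empty --- which happens only for $\ppp=0$, since for nonzero $\ppp$ any prime minimal over $aD$ with $0\neq a\in\ppp$ lies in $\wAss(K/D)$ and inside $\ppp$ --- your case ``$\qq\cap D=0$'' of (2)$\Rightarrow$(3) has no intersectand $A_\qqq$ to quote, and condition (3) itself is then only true under the convention that the empty intersection means $A\otimes_D K$ (with the convention ``empty intersection $=L$'' it fails even for the flat, hence locally $t$-linked, extension $\ZZ[T]\supset\ZZ$ at $\ppp=0$). This is a defect of the statement's phrasing rather than of your argument: for every nonzero $\ppp$ your proof goes through verbatim, and the case $\ppp=0$ of (1) is trivial because every extension of the field $D_0=K$ is $t$-linked. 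It would be worth stating that convention, or restricting (3) to nonzero primes, explicitly.
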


\section{Local properties of $\Int(D)$}

\subsection{Int primes}

A prime ideal $\ppp$ of a domain $D$ is said to be a {\it polynomial prime} of $D$ if $\Int(D) \subset D_\ppp[X]$, or equivalently if $\Int(D)_\ppp = D_\ppp[X]$; otherwise $\ppp$ is said to be an {\it int prime} of $D$ \cite{clt}.  Let us say that a prime ideal $\ppp$ of $D$ is a {\it strong polynomial prime} of $D$ if $\Int(D_\ppp) = D_\ppp[X]$; otherwise we will say that $\ppp$ is a {\it weak int prime}.  For example, every prime ideal with infinite residue field is a strong polynomial prime, by \cite[Corollary I.3.7]{cah}.  Note that $\ppp$ is a strong polynomial prime of $D$ if and only if $\ppp D_\ppp$ is a polynomial prime of $D_\ppp$.  By \cite[Proposition I.2.2]{cah} one has $D_\ppp[X] \subset \Int(D)_\ppp \subset \Int(D_\ppp)$ for any prime $\ppp$.  Thus every strong polynomial prime is a polynomial prime, but the converse is not true, as shown by \cite[Example 5.3]{clt}.  Moreover, it follows that $\ppp$ is a strong polynomial prime of $D$ if and only if $\ppp$ is a polynomial prime of $D$ and $\Int(D_\ppp) = \Int(D)_\ppp$.  We therefore have the following.

\begin{lemma}\label{strongpolynomialprime}
Let $D$ be an integral domain, and let $\ppp$ be a prime ideal of $D$.  The following are equivalent.
\begin{enumerate}
\item $\ppp$ is a strong polynomial prime of $D$.
\item $\ppp D_\ppp$ is a polynomial prime of $D_\ppp$.
\item $\ppp$ is a polynomial prime of $D$ and $\Int(D_\ppp) = \Int(D)_\ppp$.
\end{enumerate}
\end{lemma}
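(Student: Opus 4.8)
The plan is to verify the three equivalences by tracking the chain of inclusions $D_\ppp[X] \subset \Int(D)_\ppp \subset \Int(D_\ppp)$ recalled from \cite[Proposition I.2.2]{cah}, together with the observation already made in the text that $\ppp$ is a strong polynomial prime of $D$ if and only if $\ppp D_\ppp$ is a polynomial prime of $D_\ppp$. Indeed, the equivalence of (1) and (2) is essentially immediate: applying the definition of polynomial prime to the local domain $D_\ppp$ and its maximal ideal $\ppp D_\ppp$, and noting $(D_\ppp)_{\ppp D_\ppp} = D_\ppp$, the condition ``$\ppp D_\ppp$ is a polynomial prime of $D_\ppp$'' unwinds to ``$\Int(D_\ppp) \subset D_\ppp[X]$'', which combined with the reverse inclusion $D_\ppp[X] \subset \Int(D_\ppp)$ says exactly $\Int(D_\ppp) = D_\ppp[X]$, i.e.\ (1).

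For the equivalence of (1) and (3), I would argue both directions using the above chain of inclusions. If (1) holds, then $\Int(D_\ppp) = D_\ppp[X]$, so the chain forces $\Int(D)_\ppp = D_\ppp[X]$ as well, giving both that $\ppp$ is a polynomial prime of $D$ (since $\Int(D)_\ppp = D_\ppp[X]$) and that $\Int(D_\ppp) = \Int(D)_\ppp$; thus (3) holds. Conversely, if (3) holds, then $\Int(D_\ppp) = \Int(D)_\ppp$ and $\Int(D)_\ppp = D_\ppp[X]$ (the latter being one of the equivalent formulations of $\ppp$ being a polynomial prime), so $\Int(D_\ppp) = D_\ppp[X]$, which is (1).

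The main point is really that there is nothing to prove beyond bookkeeping with inclusions; the substantive facts — the inclusion chain $D_\ppp[X] \subset \Int(D)_\ppp \subset \Int(D_\ppp)$ and the equivalence of the two formulations of ``polynomial prime'' — are cited from \cite{cah} and \cite{clt} and have already been stated in the surrounding discussion. So the only potential obstacle is purely expository: making sure the reader sees that ``$\Int(D) \subset D_\ppp[X]$'' and ``$\Int(D)_\ppp = D_\ppp[X]$'' are equivalent (both amounting to $\Int(D)_\ppp \subset D_\ppp[X]$, since localization is flat and $\Int(D) \subset D_\ppp[X]$ iff the localized inclusion holds, the reverse inclusion being automatic), and similarly for the strong version. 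In fact the lemma is stated as an immediate consequence of the paragraph preceding it, so I would simply write: ``This is immediate from the discussion preceding the lemma,'' or at most spell out the two short implications above in two or three sentences.
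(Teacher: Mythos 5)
Your proposal is correct and matches the paper's treatment: the paper gives no separate proof, stating the lemma as an immediate consequence of the preceding discussion, which consists of exactly the ingredients you use — the chain $D_\ppp[X] \subset \Int(D)_\ppp \subset \Int(D_\ppp)$ from \cite[Proposition I.2.2]{cah}, the two equivalent formulations of ``polynomial prime,'' and the definitional unwinding of ``strong polynomial prime'' at the localization. Your bookkeeping of the implications $(1)\Leftrightarrow(2)$ and $(1)\Leftrightarrow(3)$ is sound, so writing ``immediate from the preceding discussion'' is precisely what the paper does.
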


The following result follows from \cite[Proposition 3.3]{ell4} (the proof of which was based on \cite[Theorem 1.5]{rus} and \cite[Proposition 1.2]{clt}).

\begin{lemma}\label{implications}
Let $D$ be an integral domain with quotient field $K$.  Then, for any prime $\ppp$ of $D$ with finite residue field, we have the following implications.
\begin{eqnarray*}
\SelectTips{cm}{11}\xymatrix{
{\ppp \mbox{ principal}} \ar@{=>}[d] \ar@{=>}[r] & {\ppp D_\ppp \mbox{ principal}} \ar@{=>}[d]   \\
{\ppp \in \Ass(K/D)} \ar@{=>}[r] \ar@{=>}[d]  & {\ppp D_\ppp \in \Ass(K/D_\ppp) \ar@{=>}[d] } \\
{\ppp \mbox{ int prime of }D} \ar@{=>}[d] \ar@{=>}[r] & {\ppp \mbox{ weak int prime of }D} \ar@{=>}[d]   \\
{\ppp \in \wAss(K/D)} \ar@{=>}[r] \ar@{=>}[d]  & {\ppp D_\ppp \in \wAss(K/D_\ppp) \ar@{=>}[d] }  \\
{\ppp \in \sKr(K/D)} \ar@{<=>}[r] \ar@{=>}[d] & {\ppp D_\ppp \in \sKr(K/D_\ppp) \ar@{<=>}[d] }  \\
{\ppp \in \TS(D)}  & {\ppp D_\ppp \in \TS(D_\ppp) \ar@{=>}[l]} 
}
\end{eqnarray*}
\end{lemma}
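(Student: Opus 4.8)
The plan is to read the diagram as two parallel columns linked by horizontal arrows: the left column is a chain of implications among properties of $\ppp$ inside $D$, and the right column is \emph{that same chain applied to the local ring $D_\ppp$ and its maximal ideal $\ppp D_\ppp$}. Several compatibilities make this precise: $K$ is also the quotient field of $D_\ppp$; $D_\ppp/\ppp D_\ppp = \kappa(\ppp)$ is again finite, so the standing hypothesis survives; by Lemma~\ref{strongpolynomialprime}, ``$\ppp D_\ppp$ is an int prime of $D_\ppp$'' means exactly ``$\ppp$ is a weak int prime of $D$''; and since $\ppp D_\ppp$ is prime, ``$\ppp D_\ppp \in \TS(D_\ppp)$'' just says $\ppp D_\ppp$ is a $t$-ideal of $D_\ppp$. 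So it suffices to (i)~prove the left column, and (ii)~prove the six horizontal comparisons; the right column is then (i) applied to $D_\ppp$.

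For the horizontal arrows I would argue quickly: \emph{principal $\Rightarrow$ principal} because localization preserves principality; \emph{$\ppp \in \Ass(K/D) \Rightarrow \ppp D_\ppp \in \Ass(K/D_\ppp)$} because annihilators commute with localization on the cyclic module $D\xi$ generated by a witness $\xi$, and $(K/D)_\ppp = K/D_\ppp$; \emph{int prime $\Rightarrow$ weak int prime} as the contrapositive of the fact, recalled above (from \cite[Proposition I.2.2]{cah}), that every strong polynomial prime is a polynomial prime; \emph{$\ppp \in \wAss(K/D) \Rightarrow \ppp D_\ppp \in \wAss(K/D_\ppp)$} because if $\ppp$ is minimal over a conductor $(aD:_D bD)$ then $\ppp D_\ppp$ is minimal over $(aD:_D bD)D_\ppp = (aD_\ppp:_{D_\ppp} bD_\ppp)$, conductors of principal ideals and minimality of primes both localizing well; \emph{$\ppp \in \sKr(K/D) \Leftrightarrow \ppp D_\ppp \in \sKr(K/D_\ppp)$} because by \cite[Proposition 1.1]{zaf1} each side is equivalent to the one assertion that $\ppp D_\ppp$ is a $t$-ideal of $D_\ppp$; and \emph{$\ppp D_\ppp \in \TS(D_\ppp) \Rightarrow \ppp \in \TS(D)$} by the theorem of \cite{kang} recalled above, since the contraction of the $t$-ideal $\ppp D_\ppp$ to $D$ is the prime $\ppp$.

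For the left column, three of the five arrows are routine. \emph{Principal $\Rightarrow \ppp \in \Ass(K/D)$}: $\ppp \neq 0$ since $\kappa(\ppp)$ is finite and $D$ is not a field, so $\ppp = aD$ with $a \neq 0$, and then $\ppp = \ann_D(a^{-1}+D)$. \emph{$\ppp \in \wAss(K/D) \Rightarrow \ppp \in \sKr(K/D) \Rightarrow \ppp \in \TS(D)$}: these are the inclusions $\wAss(K/D) \subset \sKr(K/D) \subset \TS(D)$ recalled above (from \cite[Proposition 2]{dut} and \cite[Proposition 1.1]{zaf1}). \emph{$\ppp \in \Ass(K/D)$ with $\kappa(\ppp)$ finite $\Rightarrow \ppp$ int prime}: write $\ppp = \ann_D(z+D)$ with $z \in K \setminus D$, so $(D:_D z) = \ppp$ and hence $z \notin D_\ppp$ (no element of $D \setminus \ppp$ carries $z$ into $D$); pick representatives $a_1, \dots, a_q \in D$ of the $q = |\kappa(\ppp)|$ classes modulo $\ppp$ and put $g(X) = z\prod_{i=1}^q (X-a_i)$. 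For each $d \in D$ some $d - a_i \in \ppp = (D:_D z)$, so $z(d-a_i) \in D$ and $g(d) = \bigl(z(d-a_i)\bigr)\prod_{j \neq i}(d-a_j) \in D$; thus $g \in \Int(D)$, while its leading coefficient $z \notin D_\ppp$ shows $g \notin D_\ppp[X]$ and $\ppp$ is an int prime.

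The one substantive arrow is \emph{$\ppp$ int prime $\Rightarrow \ppp \in \wAss(K/D)$}, for which I would appeal to \cite[Proposition 3.3]{ell4} (built on \cite[Theorem 1.5]{rus} and \cite[Proposition 1.2]{clt}); this is where I expect the real difficulty to lie. The easy half is visible: given $f \in \Int(D) \setminus D_\ppp[X]$, writing $f = \frac{1}{b}\sum_j a_j X^j$ with $b, a_j \in D$ one has $\{x \in D : xf \in D[X]\} = (bD :_D \sum_j a_j D) \subset \ppp$, so, $\ppp$ being prime, $\ppp \supset (bD :_D a_{j_0}D)$ for some $j_0$ --- i.e.\ $\ppp$ \emph{contains} a conductor ideal. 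Promoting this to ``$\ppp$ is \emph{minimal over} a conductor ideal'', which is what $\wAss(K/D)$ requires, is exactly the point at which one must exploit the finiteness of $\kappa(\ppp)$, through the structure of null polynomials of $D/bD$ and the primes minimal over conductor ideals with finite residue field --- the content imported from \cite{rus} and \cite{clt}.
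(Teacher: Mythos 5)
Your proposal is correct, and it is in fact more self-contained than what the paper does: the paper gives no argument for this lemma at all, simply asserting that it follows from \cite[Proposition 3.3]{ell4}, whose proof in turn rests on \cite[Theorem 1.5]{rus} and \cite[Proposition 1.2]{clt}. Your decomposition---prove the left column for an arbitrary domain having a prime with finite residue field, verify the horizontal comparisons, and obtain the right column by applying the left column to $(D_\ppp,\ppp D_\ppp)$, using Lemma \ref{strongpolynomialprime} to translate ``$\ppp D_\ppp$ is an int prime of $D_\ppp$'' into ``$\ppp$ is a weak int prime of $D$''---is sound, and your individual verifications check out: principality and annihilators/conductors of principal ideals localize well; the polynomial $z\prod_{i=1}^q(X-a_i)$ does witness that a prime in $\Ass(K/D)$ with finite residue field is an int prime (your observation that $z \notin D_\ppp$ is the key point); the inclusions $\wAss(K/D)\subset\sKr(K/D)\subset\TS(D)$ and Kang's contraction result give the remaining vertical and the bottom horizontal arrows; and \cite[Proposition 1.1]{zaf1} reduces $\ppp\in\sKr(K/D)$, $\ppp D_\ppp\in\sKr(K/D_\ppp)$, and $\ppp D_\ppp\in\TS(D_\ppp)$ to the single assertion that $\ppp D_\ppp$ is a $t$-ideal of $D_\ppp$, which yields both double-headed arrows. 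The only place you fall back on the literature is the genuinely deep implication, int prime $\Rightarrow \ppp\in\wAss(K/D)$, where you correctly isolate the difficulty (your elementary argument shows only that $\ppp$ \emph{contains} a conductor ideal, not that it is \emph{minimal over} one) and cite exactly the sources the paper invokes for the entire diagram. So, compared with the paper's blanket citation, you have traded a reference for direct proofs of everything except the one arrow that really does require the work of Rush and Cahen--Loper--Tartarone; nothing in your outline would fail.
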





\subsection{The condition $\Int(S^{-1}D) = S^{-1}\Int(D)$}

For some classes of domains $D$, such as the Noetherian domains, and more generally the Mori domains, by \cite[Proposition 2.1]{cgh}, the equality $\Int(S^{-1}D) = S^{-1}\Int(D)$ is known to hold for all multiplicative subsets $S$ of $D$.  The following result gives some characterizations of those domains $D$ for which the equality always holds.  First, recall that an extension $A$ of a domain $D$ is said to be {\it polynomially regular} if $\Int(D,A)$ is generated by $\Int(D)$ as an $A$-module.  

\begin{proposition}\label{localization}
For any integral domain $D$ with quotient field $K$, the following conditions are equivalent.
\begin{enumerate}
\item Every flat overring of $D$ is polynomially regular.
\item $\Int(S^{-1}D) = S^{-1}\Int(D)$ for every multiplicative subset $S$ of $D$.
\item $\Int(D_\ppp) = \Int(D)_\ppp$ for every prime ideal $\ppp$ of $D$.
\item $\Int(D_\ppp) = \Int(D)_\ppp$ for every $t$-localizing $t$-maximal ideal $\ppp$ of $D$ with finite residue field.
\item $\Int(D_\ppp) = \Int(D)_\ppp$ for every prime ideal $\ppp$ of $D$ such that $\ppp D_\ppp$ is a weakly associated prime of $K/D_\ppp$ with finite residue field.
\item $\Int(D_\ppp) = \Int(D)_\ppp$ for every weak int prime $\ppp$ of $D$.
\end{enumerate}
\end{proposition}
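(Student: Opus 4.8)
The plan is to prove the chain of equivalences by establishing a cycle together with a few direct implications, exploiting the fact that the ``bad'' primes for the equality $\Int(S^{-1}D)=S^{-1}\Int(D)$ are tightly controlled by the local structure. The key reduction, which I would carry out first, is the observation that $\Int(S^{-1}D) = S^{-1}\Int(D)$ for a given $S$ if and only if $S^{-1}D$ is a polynomially regular extension of $D$; this is stated in the introduction and lets us pass freely between (1) and (2). For (1)$\Rightarrow$(2) note every localization $S^{-1}D$ is a flat overring; for (2)$\Rightarrow$(1), recall that a flat overring $A$ of $D$ satisfies $A = \bigcap_{\ppp} D_\ppp$ over the primes $\ppp$ with $A \subset D_\ppp$, so $\Int(D,A) = \bigcap \Int(D,D_\ppp) = \bigcap \Int(D)_\ppp$ (using (3), which is equivalent to (2) by taking $S = D\setminus\ppp$), and one checks this intersection is $A\cdot\Int(D)$.

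Next I would set up the equivalence (2)$\Leftrightarrow$(3): (3) is the special case $S = D \setminus \ppp$, and conversely $S^{-1}\Int(D) = \bigcap_{\ppp \cap S = \emptyset} \Int(D)_\ppp = \bigcap \Int(D_\ppp) = \Int(S^{-1}D)$, the last equality because $S^{-1}D = \bigcap_{\ppp \cap S = \emptyset} D_\ppp$ and $\Int(\cdot,\cdot)$ turns this intersection of domains (over a common fraction field) into an intersection of integer-valued polynomial rings. So the substance is the equivalence of (3) with the successively weaker-looking conditions (4), (5), (6). Clearly (3)$\Rightarrow$(4),(5),(6) since those restrict $\ppp$ to a subclass of all primes. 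For the reverse implications the engine is Theorem \ref{locallyfinite1}'s philosophy together with Lemma \ref{implications} and the identity $D = \bigcap_{\ppp \in \wAss(K/D)} D_\ppp$: since $D = \bigcap_{\ppp \in \wAss(K/D)} D_\ppp$, and an inclusion-preserving argument shows $\Int(D) = \bigcap_{\ppp \in \wAss(K/D)}\Int(D)_\ppp$, it suffices to control the localizations at weakly associated primes of $K/D$. At a prime $\ppp$ with infinite residue field, $\ppp$ is automatically a strong polynomial prime (so $\Int(D_\ppp) = D_\ppp[X]$) and one shows $\Int(D)_\ppp = D_\ppp[X]$ as well; thus the equality $\Int(D_\ppp) = \Int(D)_\ppp$ is automatic there, and only the primes with finite residue field can obstruct it.

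The crux, then, is to show that among primes with finite residue field, the only obstructions live among the $t$-localizing $t$-maximal primes (for (4)), or among those $\ppp$ with $\ppp D_\ppp \in \wAss(K/D_\ppp)$ (for (5)), or among the weak int primes (for (6)). For (6) this is essentially a tautology once we know $\Int(D) = \bigcap_{\ppp}\Int(D)_\ppp$ over a suitable family and that at a strong polynomial prime the equality holds: if $\ppp$ is a strong polynomial prime then $\Int(D_\ppp) = D_\ppp[X] \subset \Int(D)_\ppp \subset \Int(D_\ppp)$ forces equality, so only weak int primes can fail. For (5), use the diagram in Lemma \ref{implications}: a weak int prime $\ppp$ with finite residue field satisfies $\ppp D_\ppp \in \wAss(K/D_\ppp)$, and we need the intersection defining $\Int(D)$ to be taken over primes in $\wAss(K/D)$ and that, at finite-residue-field primes in $\wAss(K/D)$ which are \emph{not} weak int primes, the equality is automatic. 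For (4), similarly a weak int prime $\ppp$ with finite residue field lies in $\sKr(K/D) \subset \TS(D)$ and is thus $t$-localizing, but to land inside $\TM(D)$ one invokes that a weak int prime is, by \cite[Proposition I.3.14]{cah} (the ``double boundedness''/Skolem-type) circle of ideas reflected in Lemma \ref{implications}, minimal over a conductor ideal, hence — after passing to a $t$-maximal prime above it and checking the obstruction propagates downward — one reduces to $t$-localizing $t$-maximal primes with finite residue field. I expect the main obstacle to be precisely this last point: showing that it is enough to check the equality at the $t$-maximal (not merely $t$-localizing prime) ideals, i.e.\ that if $\Int(D_\qqq) = \Int(D)_\qqq$ for a $t$-maximal $\qqq \supset \ppp$ then the equality also holds at $\ppp$. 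This should follow from the flatness/persistence of the property $\Int(D_{?}) = \Int(D)_{?}$ under further localization combined with $\Int(D) = \bigcap \Int(D)_{\qqq}$ over $t$-maximal $\qqq$, but making the intersection bookkeeping precise — in particular verifying $\Int(D_\ppp) = \bigcap_{\qqq \supseteq \ppp,\ \qqq \in \TM(D)}\Int(D_\qqq)$ and the analogue with $\Int(D)_\ppp$ — is the delicate step that the proof will need to address carefully.
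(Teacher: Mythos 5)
Your treatment of (1), (2), (3) and of the implication (6)$\Rightarrow$(3) is essentially the paper's argument (the clause ``one checks this intersection is $A\cdot\Int(D)$'' in your (2)$\Rightarrow$(1) is where the paper actually works: it localizes $\Int(D,D')$ at the maximal ideals $\ppp'$ of the flat overring $D'$, uses $D'_{\ppp'}=D_{\ppp'\cap D}$ and $\Int(D,D_\ppp)=\Int(D_\ppp)$ from \cite[Corollary I.2.6]{cah}, and glues; this is recoverable from your sketch). The genuine gap is exactly the step you defer as ``the delicate step that the proof will need to address carefully'': you never establish that checking the equality at the $t$-localizing $t$-maximal ideals with finite residue field suffices, i.e.\ the implication from (4) back to (5), (6), (3). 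The missing ingredient is elementary and makes your descent machinery unnecessary: if $\kappa(\ppp)$ is finite, then $D/\ppp$ is a domain embedded in a finite field, hence a finite domain, hence a field, so $\ppp$ is a \emph{maximal} ideal of $D$. Every weak int prime has finite residue field, and by Lemma \ref{implications} it satisfies $\ppp D_\ppp\in\wAss(K/D_\ppp)$, hence $\ppp D_\ppp\in\sKr(K/D_\ppp)$, hence $\ppp$ is $t$-localizing and in particular a $t$-prime; a maximal ideal that is a $t$-ideal is $t$-maximal. The same applies to every prime in the set of condition (5). So the prime sets of (6), (5), (4) are nested, and (4)$\Rightarrow$(5)$\Rightarrow$(6) hold by pure restriction, after which your strong-polynomial-prime argument gives (6)$\Rightarrow$(3); this is precisely the paper's ``by Lemma \ref{implications}'' step, with the maximality observation left implicit.

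By contrast, the route you propose for this step would not go through as stated: a $t$-maximal ideal $\qqq\supsetneq\ppp$ need not have finite residue field, equality $\Int(D_\qqq)=\Int(D)_\qqq$ does not obviously ``propagate downward'' to a smaller prime (it reduces to the same problem for the local ring $D_\qqq$), and the identity $\Int(D_\ppp)=\bigcap_{\qqq\in\TM(D),\,\qqq\supset\ppp}\Int(D_\qqq)$ you hope to verify is false in general (already $D_\ppp\neq\bigcap_{\qqq\supset\ppp}D_\qqq$, e.g.\ for $\ppp=0$ in $\ZZ$). A minor further slip: a weak int prime is minimal over a conductor of $D_\ppp$, not of $D$; the global statement is what Lemma \ref{implications} gives for int primes. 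None of this is needed once one notes that finite residue field forces maximality, which is the one observation your proposal is missing.
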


\begin{proof}
By \cite[Corollary I.2.6]{cah} one has $\Int(D, S^{-1}D) = \Int(S^{-1}D)$, from which it follows that (1) implies (2).  The implications $(2) \Rightarrow (3) \Rightarrow (4)$ are trivial.  By Lemma \ref{implications} one has $(4) \Rightarrow (5) \Rightarrow (6)$.  Moreover, (6) implies (3) by Lemma \ref{strongpolynomialprime}.  Suppose that (3) holds.  Let $D'$ be a flat overring of $D$, and let $\ppp'$ be a maximal ideal of $D'$.  Then $D'_{\ppp'} = D_\ppp$, where $\ppp = \ppp' \cap D$.  Thus we have $\Int(D,D')_{\ppp'} \subset \Int(D,D'_{\ppp'}) = \Int(D,D_\ppp) = \Int(D_\ppp) = \Int(D)_\ppp = (D'\Int(D))_{\ppp'}$, where $D'\Int(D)$ denotes the $D'$-module generated by $\Int(D)$.  Therefore we have $\Int(D,D')_{\ppp'} = (D'\Int(D))_{\ppp'}$ for every maximal ideal $\ppp'$ of $D'$.  It follows that $\Int(D,D') = D' \Int(D)$ and thus $D'$ is a polynomially regular
extension of $D$.  Thus (3) implies (1), and the six conditions are equivalent.
\end{proof}

If the equivalent conditions of Proposition \ref{localization} hold, then we will say that $D$ is {\it polynomially L-regular}.  Thus, for example, any Mori domain is polynomially L-regular.  We will generalize this in Proposition \ref{tfinitereg} and Theorem \ref{locallyfinite} below.

A domain $D$ is said to be {\it of finite character} if every nonzero element of $D$ lies in only finitely many maximal ideals.  For example, any Dedekind domain or semilocal domain is of finite character.  A domain $D$ is said to be {\it of finite $t$-character} if every nonzero element of $D$ lies in only finitely many $t$-maximal ideals.  For example, every TV domain is of finite $t$-character.

\begin{proposition}\label{tfinitereg}
Every domain of finite $t$-character, and in particular every TV domain, is polynomially L-regular.  Likewise, every domain of finite character is polynomially L-regular.
\end{proposition}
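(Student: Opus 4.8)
The plan is to reduce everything to Proposition \ref{localization}. Consider first a domain $D$ of finite $t$-character, and set $\SS = \TM(D)$. Since $\wAss(K/D) \subseteq \TS(D)$ and every proper $t$-ideal of $D$ is contained in a $t$-maximal ideal, we get $\bigcap_{\ppp \in \SS} D_\ppp \subseteq \bigcap_{\qqq \in \wAss(K/D)} D_\qqq = D$, so $D = \bigcap_{\ppp \in \SS} D_\ppp$, and by hypothesis each nonzero $c \in D$ lies in only finitely many $\ppp \in \SS$. As $\Int(D, D_\ppp) = \Int(D_\ppp)$ by \cite[Corollary I.2.6]{cah}, this gives $\Int(D) = \bigcap_{\ppp \in \SS} \Int(D_\ppp)$, an intersection that is \emph{locally finite} in the following sense: for each nonzero $c \in D$ one has $\tfrac1c D[X] \subseteq D_\ppp[X] \subseteq \Int(D_\ppp)$ for every $\ppp \in \SS$ with $c \notin \ppp$, hence for all but finitely many $\ppp \in \SS$.

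I would first check that localization at a prime $\ppp_0$ of $D$ commutes with this intersection, i.e.\ $\Int(D)_{\ppp_0} = \bigcap_{\ppp \in \SS} \Int(D_\ppp)_{\ppp_0}$, only the inclusion $\supseteq$ being nontrivial. Given $f$ in the right-hand side, write $f = g/c$ with $g \in D[X]$ and $0 \neq c \in D$; for each of the finitely many $\ppp \in \SS$ with $c \in \ppp$ choose $s_\ppp \in D \setminus \ppp_0$ with $s_\ppp f \in \Int(D_\ppp)$, and let $s$ be the product of these $s_\ppp$, so $s \notin \ppp_0$. Then $sf \in \Int(D_\ppp)$ for every $\ppp \in \SS$: for $\ppp$ with $c \in \ppp$ because $\Int(D_\ppp)$ is a $D$-module and $s_\ppp \mid s$, and for $\ppp$ with $c \notin \ppp$ because $sf = sg/c \in \tfrac1c D[X] \subseteq \Int(D_\ppp)$. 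Hence $sf \in \Int(D)$ and $f \in \Int(D)_{\ppp_0}$.

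Now take a $t$-localizing $t$-maximal ideal $\ppp_0$ of $D$ with finite residue field; by Proposition \ref{localization}(4) it is enough to show $\Int(D_{\ppp_0}) \subseteq \Int(D)_{\ppp_0}$, i.e.\ that for each $f \in \Int(D_{\ppp_0})$ the ideal $\{s \in D : sf \in \Int(D)\}$ is not contained in $\ppp_0$. Write $f = g/c$ with $g \in D[X]$, $0 \neq c \in D$, and let $\aaa = \{a \in D : af \in D[X]\}$. Decomposing the condition $sf \in \Int(D)$ through $\Int(D) = \bigcap_{\ppp \in \SS}\Int(D_\ppp)$: the factor at $\ppp_0$ imposes nothing, since $f \in \Int(D_{\ppp_0})$ forces $sf \in \Int(D_{\ppp_0})$ for all $s \in D$; the factor at any $\ppp$ with $\aaa \not\subseteq \ppp$ imposes nothing, since then $f \in D_\ppp[X]$; and the remaining $\ppp \in \SS$ — those $\neq \ppp_0$ with $\aaa \subseteq \ppp$ — are finite in number, as $\aaa$ contains a nonzero element. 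For such a $\ppp$, the ideal $\{s \in D : sf \in \Int(D_\ppp)\}$ contains $\aaa D_\ppp \cap D$, because if $a = \sum_i a_i \eta_i$ with $a_i \in \aaa$, $\eta_i \in D_\ppp$, then $af = \sum_i \eta_i(a_i f) \in D_\ppp[X]$; hence it suffices to prove $\aaa D_\ppp \cap D \not\subseteq \ppp_0$, after which prime avoidance over the finitely many relevant $\ppp$ gives $\{s \in D : sf \in \Int(D)\} \not\subseteq \ppp_0$. The inclusion $\aaa D_\ppp \cap D \not\subseteq \ppp_0$ is where I expect the real difficulty to lie: one must exploit that $\ppp \neq \ppp_0$ are incomparable $t$-maximal ideals and that $\ppp_0$ is $t$-localizing — equivalently $\ppp_0 \in \sKr(K/D)$, so that every finitely generated ideal inside $\ppp_0$ sits inside a conductor ideal of $D$ inside $\ppp_0$ — to see that the denominator ideal $\aaa$ of the integer-valued polynomial $f$, while contained in $\ppp_0$, is nevertheless ``large'' relative to $\ppp_0$ in the required sense; concretely, one wants to produce, from a relation $cb \in \aaa$ with $b \notin \ppp$, an element of $\aaa D_\ppp \cap D$ lying outside $\ppp_0$.

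The finite-character case is handled by the same method, now with $\SS$ the set of maximal ideals of $D$ (so that $D = \bigcap_{\mm \in \SS} D_\mm$ automatically and the intersection is locally finite by hypothesis) and verifying condition (3) of Proposition \ref{localization}. Finally, every TV domain is of finite $t$-character by \cite[Theorem 1.3 and Proposition 2.4]{hou}, so the last assertion follows.
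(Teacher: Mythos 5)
Your reduction to Proposition \ref{localization} is set up correctly: $D=\bigcap_{\ppp\in\TM(D)}D_\ppp$, the decomposition $\Int(D)=\bigcap_{\ppp\in\SS}\Int(D_\ppp)$, the restriction to the finitely many $t$-maximal ideals $\ppp\neq\ppp_0$ containing the denominator ideal $\aaa$ of $f$, and the final product-of-ideals step (which is just primeness of $\ppp_0$, not prime avoidance) are all fine. But the argument has a genuine gap exactly at the point you flag: you never prove that $\aaa D_\ppp\cap D\not\subseteq\ppp_0$, equivalently that every $f\in\Int(D_{\ppp_0})$ admits some $s\in D\setminus\ppp_0$ with $sf\in D_\ppp[X]$, i.e.\ that $\Int(D_{\ppp_0})\subseteq(D_\ppp)_{\ppp_0}[X]$ for distinct primes $\ppp\neq\ppp_0$. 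That statement is the entire substance of the proof: it is Lemma \ref{interchange} of the paper, on which the direct argument for the generalization, Theorem \ref{locallyfinite}, rests. Without it, what you have written is a correct reduction, not a proof.

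Moreover, the route you suggest for closing the gap — exploiting that $\ppp_0$ is $t$-localizing, i.e.\ $\ppp_0\in\sKr(K/D)$, and conductor ideals — points in the wrong direction: no $t$-closure input is needed, only that $\ppp\neq\ppp_0$. The correct argument is this: every prime $\ppp'\subseteq\ppp\cap\ppp_0$ is non-maximal (otherwise $\ppp=\ppp'=\ppp_0$), so $D/\ppp'$ is a domain that is not a field, hence infinite, and $\kappa(\ppp')$ is infinite; therefore $\Int(D_{\ppp'})=D_{\ppp'}[X]$ by \cite[Corollary I.3.7]{cah}. Since $D_{\ppp'}$ is a localization of $D_{\ppp_0}$, one gets $\Int(D_{\ppp_0})\subseteq\Int(D_{\ppp'})=D_{\ppp'}[X]$ for all such $\ppp'$, and intersecting gives $\Int(D_{\ppp_0})\subseteq(D_\ppp)_{\ppp_0}[X]=(D_\ppp[X])_{\ppp_0}$, which is exactly what you need (and it also covers the finite-character case, where the relevant maximal ideals may contain $\ppp_0$, so the comparable-prime case of the lemma is needed as well). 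With this inserted, your argument becomes essentially the paper's proof of Theorem \ref{locallyfinite}; the paper's own proof of Proposition \ref{tfinitereg} is instead a one-line citation of \cite[Proposition 2.3]{tar}, which verifies condition (4) (resp.\ (3)) of Proposition \ref{localization} directly.
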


\begin{proof}
By \cite[Proposition 2.3]{tar}, if $D$ is of finite $t$-character, then condition (4) of Proposition \ref{localization} holds, and if $D$ is of finite character, then condition (3) of Proposition \ref{localization} holds.
\end{proof}

We may generalize both Proposition \ref{tfinitereg} and \cite[Proposition 2.3]{tar}.  First, we prove the following somewhat surprising lemma, which holds even for domains that are not polynomial L-regular.

\begin{lemma}\label{interchange}
One has $(\Int(D_\ppp))_\qqq = (\Int(D_\qqq))_\ppp$ for any prime ideals $\ppp$ and $\qqq$ of an integral domain $D$, and in fact $(\Int(D_\ppp))_\qqq = (D_\ppp)_\qqq[X]$ if $\ppp \neq \qqq$.
\end{lemma}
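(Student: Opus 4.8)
The plan is to reduce everything to the single assertion that $(\Int(D_\ppp))_\qqq = (D_\ppp)_\qqq[X]$ whenever $\ppp \neq \qqq$, where throughout $(\,\cdot\,)_\qqq$ means localization at the multiplicative set $D \setminus \qqq$. Granting this, the first identity follows immediately: if $\ppp = \qqq$ then both sides of $(\Int(D_\ppp))_\qqq = (\Int(D_\qqq))_\ppp$ equal $\Int(D_\ppp)$, because $D \setminus \ppp$ consists of units of $D_\ppp$ and hence of $\Int(D_\ppp)$; and if $\ppp \neq \qqq$ then both sides equal $(D_\ppp)_\qqq[X] = (D_\qqq)_\ppp[X]$, the last equality because localizing $D$ at $D \setminus \ppp$ and at $D \setminus \qqq$, in either order, gives localization at the same multiplicative set. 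So I fix $\ppp \neq \qqq$, set $S = D \setminus \qqq$, and note $(D_\ppp)_\qqq = S^{-1}D_\ppp$ and $(D_\ppp)_\qqq[X] = S^{-1}(D_\ppp[X])$. One inclusion, $(D_\ppp)_\qqq[X] \subseteq (\Int(D_\ppp))_\qqq$, is clear from $D_\ppp[X] \subseteq \Int(D_\ppp)$, so the whole content is the inclusion $\Int(D_\ppp) \subseteq S^{-1}(D_\ppp[X])$.

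I would prove this by cases. If $\Int(D_\ppp) = D_\ppp[X]$ (i.e.\ $\ppp$ is a strong polynomial prime of $D$), there is nothing to do. Otherwise $\ppp$ is a weak int prime, so by \cite[Corollary I.3.7]{cah} the residue field $\kappa(\ppp) = \operatorname{Frac}(D/\ppp)$ is finite; then $D/\ppp$ is a finite integral domain, hence a field, so $\ppp$ is maximal in $D$. (If $D$ is a finite field the lemma is trivial, so we may assume $D$, and hence $D_\ppp$, is infinite.) Since $\ppp$ is maximal and $\qqq \neq \ppp$ we have $\ppp \not\subseteq \qqq$, so I fix $x \in \ppp \setminus \qqq$; then $x \in S$, $x \neq 0$, and $x$ lies in $\mm := \ppp D_\ppp$, the maximal ideal of $D_\ppp$.

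The crux is to show that, for every $f \in \Int(D_\ppp)$, the conductor $\mathfrak{d}_f = \{\, d \in D_\ppp : df \in D_\ppp[X] \,\}$ is either all of $D_\ppp$ or $\mm$-primary. It is nonzero: choosing $n+1$ distinct points of $D_\ppp$ ($n = \deg f$), Lagrange interpolation exhibits a nonzero product of their differences lying in $\mathfrak{d}_f$. And if $\mathfrak{d}_f$ is proper, the only prime of $D_\ppp$ containing it is $\mm$: if $\qqq'$ were a prime with $\qqq' \subsetneq \mm$ and $\mathfrak{d}_f \subseteq \qqq'$, then $\qqq' = \qqq'' D_\ppp$ for a non-maximal prime $\qqq'' \subsetneq \ppp$ of $D$, so $D/\qqq''$ is a non-field domain and therefore infinite, so $\kappa(\qqq'')$ is infinite; since $\operatorname{Frac}(D_\ppp/\qqq') = \operatorname{Frac}(D/\qqq'') = \kappa(\qqq'')$, the prime $\qqq'$ of $D_\ppp$ has infinite residue field, whence by \cite[Corollary I.3.7]{cah} together with the inclusions $A_\rrr[X] \subseteq \Int(A)_\rrr \subseteq \Int(A_\rrr)$ (applied to $A = D_\ppp$, $\rrr = \qqq'$) one gets $\Int(D_\ppp) \subseteq \Int(D_\ppp)_{\qqq'} = (D_\ppp)_{\qqq'}[X]$; but then $df \in D_\ppp[X]$ for some $d \in D_\ppp \setminus \qqq'$, contradicting $\mathfrak{d}_f \subseteq \qqq'$. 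Hence $\sqrt{\mathfrak{d}_f} = \mm$ when $\mathfrak{d}_f$ is proper, so in every case $x \in \sqrt{\mathfrak{d}_f}$; thus $x^M \in \mathfrak{d}_f$ and $x^M f \in D_\ppp[X]$ for some $M \geq 1$, and since $x \in S$ we obtain $f = x^{-M}(x^M f) \in S^{-1}(D_\ppp[X]) = (D_\ppp)_\qqq[X]$, completing the argument.

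The hard part is the third paragraph: because $D_\ppp$ need not be Noetherian, one cannot invoke primary decomposition and must instead argue by hand that the conductor ideal of an arbitrary integer-valued polynomial over $D_\ppp$ has no minimal prime strictly below $\mm$. The mechanism that makes this work is the observation that a finite residue field forces maximality, so that every prime of $D_\ppp$ properly contained in $\mm$ automatically has infinite residue field and is a polynomial prime of $D_\ppp$.
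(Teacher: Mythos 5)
Your argument is correct, but it reaches the key containment by a genuinely different mechanism than the paper. The paper argues globally: for $\ppp \neq \qqq$ it takes $\SS$ to be the set of primes of $D$ contained in $\ppp \cap \qqq$, notes that none of these is maximal, hence each $\ppp' \in \SS$ has infinite residue field and satisfies $\Int(D_{\ppp'}) = D_{\ppp'}[X]$, and then combines $\Int(D_\qqq) \subset \Int(D_{\ppp'})$ with the intersection identity $\bigcap_{\ppp' \in \SS} D_{\ppp'} = (D_\ppp)_\qqq$ to get $\Int(D_\qqq) \subset (D_\ppp)_\qqq[X]$ in one stroke, concluding by symmetry. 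You instead prove $\Int(D_\ppp) \subset (D_\ppp)_\qqq[X]$ element-by-element: after disposing of the infinite residue field case, you use that a finite residue field forces $\ppp$ to be maximal, pick $x \in \ppp \setminus \qqq$, and show that the conductor of each $f \in \Int(D_\ppp)$ into $D_\ppp[X]$ has radical $\mm = \ppp D_\ppp$, because every non-maximal prime of $D_\ppp$ has infinite residue field. So the basic inputs are the same (\cite[Corollary I.3.7]{cah} and the inclusions $A_\rrr[X] \subset \Int(A)_\rrr \subset \Int(A_\rrr)$ from \cite[Proposition I.2.2]{cah}), but they are deployed prime-by-prime on $\Spec(D_\ppp)$ via a conductor-radical argument rather than via an intersection of localizations of $D$. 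Your route is longer and needs the case split, but it yields slightly finer information: the denominators of any $f \in \Int(D_\ppp)$ are cleared by a power of a single element of $\ppp \setminus \qqq$, i.e.\ the conductor is $\mm$-primary or all of $D_\ppp$ (and ``primary'' is indeed justified, since an ideal whose radical is maximal is primary, though you only use the radical). The paper's intersection argument is shorter and treats the two localizations symmetrically at once. One cosmetic remark: the Lagrange-interpolation step is redundant, since nonvanishing of the conductor already follows from clearing the denominators of the finitely many coefficients of $f$ --- which is exactly your own argument applied at $\qqq' = (0)$.
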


\begin{proof}
The result is trivial if $\ppp = \qqq$.  Suppose $\ppp \neq \qqq$, and let
$\SS$ be the set of all primes of $D$ contained in $\ppp \cap \qqq$.  Since no prime of
$\SS$ is maximal one has $\Int(D_\qqq) \subset \Int(D_{\ppp'}) = D_{\ppp'}[X]$ for each prime $\ppp' \in \SS$.  Therefore $$\Int(D_\qqq) \subset \bigcap_{\ppp' \in \SS} D_{\ppp'}[X] = (D_\ppp)_\qqq[X] = (D_\ppp[X])_{\qqq} \subset (\Int(D_\ppp))_\qqq,$$ hence
$(\Int(D_\qqq))_\ppp \subset (D_\ppp)_\qqq[X] \subset (\Int(D_\ppp))_\qqq$ and the result follows by symmetry.
\end{proof}

An intersection $\bigcap_{i \in I} D_i$ of integral domains $D_i$ each contained in the same field is said to be {\it locally finite} if every nonzero element of the intersection is a unit in $D_i$ for all but finitely many $i \in I$.  For example, a domain $D$ is of finite $t$-character if and only if the intersection $D = \bigcap_{\ppp \in \TM(D)} D_\ppp$ is locally finite.

\begin{theorem}\label{locallyfinite}
Let $D$ be a domain that is equal to a locally finite intersection $\bigcap_{\ppp \in \SS} D_\ppp$ for some subset $\SS$ of $\Spec(D)$.  Then $\Int(S^{-1}D) = S^{-1}\Int(D)$ for every multiplicative subset $S$ of $D$, or, in other words, $D$ is polynomially L-regular.
\end{theorem}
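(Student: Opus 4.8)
The plan is to verify condition~(3) of Proposition~\ref{localization} for $D$, namely that $\Int(D_\ppp) = \Int(D)_\ppp$ for every prime ideal $\ppp$ of $D$; by that proposition this is exactly polynomial L-regularity. The inclusion $\Int(D)_\ppp \subseteq \Int(D_\ppp)$ always holds by \cite[Proposition I.2.2]{cah}, so everything comes down to showing $\Int(D_\ppp) \subseteq \Int(D)_\ppp$ for a fixed but arbitrary prime $\ppp$. For this I would exploit $D = \bigcap_{\qqq \in \SS} D_\qqq$ in the following elementary way: if $h \in K[X]$ lies in $\Int(D_\qqq)$ for every $\qqq \in \SS$, then $h(a) \in \bigcap_{\qqq \in \SS} D_\qqq = D$ for every $a \in D$, so $h \in \Int(D)$. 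Hence, given $f \in \Int(D_\ppp)$, it suffices to produce one element $s \in D \setminus \ppp$ with $sf \in \Int(D_\qqq)$ for every $\qqq \in \SS$; then $sf \in \Int(D)$ and $f = (sf)/s \in \Int(D)_\ppp$.

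First clear denominators: write $f = g/d$ with $g \in D[X]$ and $0 \neq d \in D$. Local finiteness of the intersection $D = \bigcap_{\qqq \in \SS} D_\qqq$ says precisely that $d$ is a unit in $D_\qqq$ for all but finitely many $\qqq \in \SS$, i.e.\ that $F := \{\qqq \in \SS : d \in \qqq\}$ is finite; and for $\qqq \in \SS \setminus F$ one gets $f = g/d \in D_\qqq[X] \subseteq \Int(D_\qqq)$ for free. So only the finitely many $\qqq \in F$ can obstruct, which is where the locally finite hypothesis earns its keep. For $\qqq \in F$ with $\qqq = \ppp$ there is nothing to do. For $\qqq \in F$ with $\qqq \neq \ppp$ I would invoke Lemma~\ref{interchange}, which gives $(\Int(D_\ppp))_\qqq = (D_\ppp)_\qqq[X] = ((D \setminus \ppp)^{-1}D_\qqq)[X]$; since $f \in \Int(D_\ppp)$ already lies in this localization, a further clearing of denominators produces some $s_\qqq \in D \setminus \ppp$ with $s_\qqq f \in D_\qqq[X] \subseteq \Int(D_\qqq)$. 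Taking $s = \prod_{\qqq \in F} s_\qqq$ (with the factor for $\qqq = \ppp$ set to $1$ in case $\ppp \in F$) gives $s \in D \setminus \ppp$ since $\ppp$ is prime, and $sf \in \Int(D_\qqq)$ for every $\qqq \in \SS$: automatically for $\qqq \notin F$, and for $\qqq \in F$ by writing $sf = (s/s_\qqq)(s_\qqq f)$ with $s/s_\qqq \in D$ and $s_\qqq f \in \Int(D_\qqq)$. That finishes the argument.

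The one genuinely delicate point is the case $\qqq \in F$, $\qqq \neq \ppp$: a priori $f$ is only known to be integer-valued on $D_\ppp$, and there is no reason it should behave well on the unrelated localization $D_\qqq$. Lemma~\ref{interchange} --- the ``somewhat surprising'' lemma --- is exactly what saves the day, because it shows that after inverting $D \setminus \ppp$ the ring $\Int(D_\qqq)$ collapses to the trivial $(D_\qqq)_\ppp[X]$, so that $f$ lies there up to a single denominator drawn from $D \setminus \ppp$. Once that is in hand, the remaining ingredients --- clearing denominators, using local finiteness to cut down to finitely many primes, and recombining the $s_\qqq$ into one $s$ --- are routine, and in particular no case distinction between $\ppp \in \SS$ and $\ppp \notin \SS$ is needed.
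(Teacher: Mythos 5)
Your argument is correct and takes essentially the same route as the paper's proof: verify condition (3) of Proposition~\ref{localization}, clear denominators, use local finiteness to isolate finitely many exceptional primes of $\SS$, and handle those via Lemma~\ref{interchange} before recombining the elements $s_\qqq$ into a single $s \in D \setminus \ppp$. The only cosmetic difference is that you invoke the explicit description $(\Int(D_\ppp))_\qqq = (D_\ppp)_\qqq[X]$, whereas the paper uses the equivalent inclusion $\Int(D_\qqq) \subset (\Int(D_{\ppp_i}))_\qqq$ from the same lemma.
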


\begin{proof}
Let $\qqq$ be any prime ideal of $D$.  Since $\Int(D)_\qqq \subset \Int(D, D_\qqq) = \Int(D_\qqq)$, we need only show $\Int(D_\qqq) \subset \Int(D)_\qqq$.  Let $f \in \Int(D_\qqq)$, and write $f = g/a$, where $g \in D[X]$ and $a \in D \backslash 0$.  Since the intersection $\bigcap_{\ppp \in \SS} D_\ppp$ is locally finite, one has $a \notin \ppp$, and therefore $f \in D_\ppp[X]$, for all but finitely many $\ppp \in \SS$, say, $\ppp_1, \ppp_2, \ldots, \ppp_n$.  By Lemma \ref{interchange}, for each $i$ one has $\Int(D_\qqq) \subset (\Int(D_{\ppp_i}))_\qqq$, so there exists $s_i \in D \backslash \qqq$ such that $s_i f \in \Int(D_{\ppp_i})$.  Let $s = s_1 s_2 \cdots s_n$.  Then
$s \in D \backslash \qqq$ and $s f \in \Int(D_{\ppp_i})$ for $i = 1, 2, \ldots, n$.
But $s f \in D_\ppp[X] \subset \Int(D_\ppp)$ for all $\ppp \in \SS \backslash \{\ppp_1, \ppp_2, \ldots, \ppp_n\}$.  Hence $sf \in \Int(D_\ppp)$ for all $\ppp \in \SS$.  It follows that $sf \in \Int(D)$, and thus $f \in \Int(D)_\qqq$.
This completes the proof. 
\end{proof}

Note that a domain $D$ is equal to a locally finite intersection $\bigcap_{\ppp \in \SS} D_\ppp$, where $\SS$ is a subset of $\Spec(D)$, if and only if every nonzero proper conductor ideal of $D$ is contained in a finite and nonzero number of prime ideals in $\SS$.

Regarding PVMD's, we record the following.

\begin{lemma}\label{TVPVMD}
Let $D$ be an integral domain.  Each of the following conditions implies the next.
\begin{enumerate}
\item[(a)] $D$ is a Krull domain.
\item[(b)] $D$ is a TV PVMD.
\item[(c)] $D$ is an H PVMD of finite $t$-character.
\item[(d)] $D$ is a polynomially L-regular H PVMD.
\item[(e)] $D$ is a polynomially L-regular PVMD such that $\ppp D_\ppp$ is principal for every $t$-maximal ideal $\ppp$ of $D$.
\end{enumerate}
\end{lemma}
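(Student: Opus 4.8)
The plan is to prove the chain (a) $\Rightarrow$ (b) $\Rightarrow$ (c) $\Rightarrow$ (d) $\Rightarrow$ (e) one link at a time; each link is essentially immediate from a fact already recorded above, so the proof consists mostly in assembling the right references rather than in any genuinely new argument. For (a) $\Rightarrow$ (b), I would recall that a Krull domain satisfies the ascending chain condition on (integral) divisorial ideals — a strictly ascending chain of divisorial ideals would give a strictly descending chain of effective divisors, whose total degrees strictly decrease — so a Krull domain is a Mori domain, hence a TV domain by the implications Mori $\Rightarrow$ TV noted above. And a Krull domain is a PVMD because, as recalled in the introduction, its $t$-maximal ideals are precisely its height-one primes, and $D_\ppp$ is a DVR, in particular a valuation domain, at each such $\ppp$. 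Thus a Krull domain is a TV PVMD.

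For (b) $\Rightarrow$ (c), the one point needed is that every TV domain is an H domain of finite $t$-character, which is \cite[Theorem 1.3 and Proposition 2.4]{hou} as recalled above; combining this with the PVMD hypothesis shows that a TV PVMD is an H PVMD of finite $t$-character. For (c) $\Rightarrow$ (d), I would invoke Proposition \ref{tfinitereg}, which asserts that every domain of finite $t$-character is polynomially L-regular; hence an H PVMD of finite $t$-character is a polynomially L-regular H PVMD.

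For (d) $\Rightarrow$ (e), the only thing to verify is that $\ppp D_\ppp$ is principal for every $t$-maximal ideal $\ppp$ of $D$, since the hypotheses ``polynomially L-regular'' and ``PVMD'' carry over verbatim (and the H condition is simply dropped). Here I would use that $D$ is an H PVMD: by Proposition \ref{HPVMD1} every $t$-maximal ideal $\ppp$ of $D$ is then $t$-invertible, and since a $t$-maximal ideal is in particular a $t$-prime, the second assertion of Proposition \ref{tinv} forces $\ppp D_\ppp$ to be principal (it also gives $\ppp = I_v$ for some finitely generated $I$, which is not needed here). This gives (e) and completes the proof.

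As for difficulty, there is essentially no obstacle: every step is a direct consequence of a result already stated. The only assertion that requires more than a citation is the classical fact, used in (a) $\Rightarrow$ (b), that a Krull domain is a Mori domain; everything else is bookkeeping.
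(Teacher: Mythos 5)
Your proposal is correct and follows the paper's proof essentially verbatim: the same chain of implications, with (c)~$\Rightarrow$~(d) via Proposition \ref{tfinitereg} and (d)~$\Rightarrow$~(e) via Propositions \ref{HPVMD1} and \ref{tinv}, the paper simply dismissing (a)~$\Rightarrow$~(b) and (b)~$\Rightarrow$~(c) as clear (the latter from the stated fact that TV domains are H domains of finite $t$-character). Your added detail on Krull $\Rightarrow$ Mori $\Rightarrow$ TV and Krull $\Rightarrow$ PVMD just fills in what the paper leaves implicit.
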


\begin{proof}
Clearly (a) implies (b) and (b) implies (c), and (c) implies (d) by Proposition \ref{tfinitereg}.  Moreover, (d) implies (e) by Propositions \ref{HPVMD1} and \ref{tinv}.
\end{proof}

\subsection{$\Int(D)$ as an extension of $D$}

If $\ppp$ is a polynomial prime of $D$, then $\Int(D)_\ppp = D_\ppp[X]$ is free as a $D_\ppp$-module.  Thus by Lemma \ref{implications} we have the following.

\begin{lemma}\label{locallyfreeprop}
Let $D$ be an integral domain with quotient field $K$.  Then the following conditions are equivalent.
\begin{enumerate}
\item $\Int(D)$ is locally free (resp., flat) as a $D$-module.
\item $\Int(D)_\ppp$ is free (resp., flat) as a $D_\ppp$-module for every $t$-localizing prime $\ppp$ of $D$ with finite residue field.
\item $\Int(D)_\ppp$ is free (resp., flat) as a $D_\ppp$-module for every $\ppp \in \wAss(K/D)$ with finite residue field.
\item $\Int(D)_\ppp$ is free (resp., flat) as a $D_\ppp$-module for every int prime $\ppp$ of $D$.
\end{enumerate}
\end{lemma}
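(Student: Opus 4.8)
The plan is to run the four conditions around the cycle $(1)\Rightarrow(2)\Rightarrow(3)\Rightarrow(4)\Rightarrow(1)$, using the elementary observation recorded just above — that $\Int(D)_\ppp = D_\ppp[X]$ is free over $D_\ppp$ whenever $\ppp$ is a polynomial prime — together with the implications in Lemma \ref{implications} to keep track of exactly which primes must be tested. Here I use that flatness is a local property and that $\Int(D)$ is locally free precisely when $\Int(D)_\ppp$ is $D_\ppp$-free for every prime $\ppp$ (a localization of a free module being free). Granting this, $(1)\Rightarrow(2)$ is immediate, since $(2)$ simply restricts attention to a subclass of primes.

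For $(2)\Rightarrow(3)$, recall from Section 1.2 that $\wAss(K/D)\subset\sKr(K/D)$ and that a prime in $\sKr(K/D)$ is $t$-localizing by \cite[Proposition 1.1]{zaf1}; hence every $\ppp\in\wAss(K/D)$ with finite residue field is among the primes covered by $(2)$, which gives $(3)$. For $(3)\Rightarrow(4)$, note first that an int prime of $D$ automatically has finite residue field: a prime with infinite residue field is a strong polynomial prime by \cite[Corollary I.3.7]{cah}, hence a polynomial prime, hence not an int prime. The left-hand column of the diagram in Lemma \ref{implications} then shows that any int prime of $D$ lies in $\wAss(K/D)$, so $(4)$ follows from $(3)$.

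Finally, for $(4)\Rightarrow(1)$, let $\ppp$ be an arbitrary prime of $D$. If $\ppp$ is a polynomial prime, then $\Int(D)_\ppp = D_\ppp[X]$ is free, hence flat, over $D_\ppp$; if $\ppp$ is an int prime, then $\Int(D)_\ppp$ is free (resp.\ flat) over $D_\ppp$ by $(4)$. In either case $\Int(D)_\ppp$ is free (resp.\ flat), so $\Int(D)$ is locally free (resp.\ flat) over $D$ by the local criterion. I expect no genuine obstacle in the argument — all the substance is already packaged in Lemma \ref{implications} and \cite[Proposition 1.1]{zaf1} — the only point needing a moment's care being the verification that ``int prime'' in $(4)$ may be stated with no finite-residue-field hypothesis, which is exactly the remark that an infinite residue field forces a polynomial prime.
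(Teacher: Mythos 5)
Your proof is correct and is essentially the argument the paper intends: the paper's one-line proof (``if $\ppp$ is a polynomial prime then $\Int(D)_\ppp = D_\ppp[X]$ is free, thus by Lemma \ref{implications}\dots'') is exactly your chain of inclusions $\{\text{int primes}\}\subset\wAss(K/D)\subset\sKr(K/D)=\{t\text{-localizing primes}\}$ together with the polynomial/int prime dichotomy for the closing implication. You have simply written out the details, including the correct observation that int primes necessarily have finite residue field.
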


Note that, by \cite[Exercise II.16]{cah}, if $\ppp D_\ppp$ is principal, then $\Int(D_\ppp)$ is free as a $D_\ppp$-module.  Therefore Lemma \ref{locallyfreeprop} implies the following.

\begin{lemma}\label{locallyfreecor}
Let $D$ be an integral domain such that $\ppp D_\ppp$ is principal and $\Int(D_\ppp) = \Int(D)_\ppp$ for every $t$-localizing $t$-maximal ideal $\ppp$ of $D$ with finite residue field.  Then $\Int(D)$ is locally free as a $D$-module.
\end{lemma}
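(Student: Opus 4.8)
The plan is to deduce the statement directly from Lemma \ref{locallyfreeprop} together with the observation recorded just before it (namely \cite[Exercise II.16]{cah}: if $\ppp D_\ppp$ is principal, then $\Int(D_\ppp)$ is free as a $D_\ppp$-module). By Lemma \ref{locallyfreeprop}, to prove that $\Int(D)$ is locally free over $D$ it suffices to check that $\Int(D)_\ppp$ is free over $D_\ppp$ for every $t$-localizing prime $\ppp$ of $D$ with finite residue field. So the whole task is to produce that freeness from the hypothesis, which only speaks about $t$-localizing $t$-maximal ideals with finite residue field.

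Fix such a prime $\ppp$. The one thing to notice is that $\ppp$ is automatically $t$-maximal: since $\kappa(\ppp)$ is finite, $D/\ppp$ is a finite integral domain, hence a field, so $\ppp$ is a maximal ideal of $D$; and since $\ppp$ is $t$-localizing it is a $t$-ideal of $D$ (as recalled in Section 1.2); a maximal ideal that is a $t$-ideal is trivially maximal among proper $t$-ideals, i.e. $t$-maximal. Thus the hypothesis applies to $\ppp$ itself: $\ppp D_\ppp$ is principal and $\Int(D_\ppp) = \Int(D)_\ppp$. By \cite[Exercise II.16]{cah}, principality of $\ppp D_\ppp$ forces $\Int(D_\ppp)$ to be free over $D_\ppp$, and hence $\Int(D)_\ppp$ is free over $D_\ppp$. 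As $\ppp$ was an arbitrary $t$-localizing prime of $D$ with finite residue field, Lemma \ref{locallyfreeprop} yields that $\Int(D)$ is locally free as a $D$-module.

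There is essentially no obstacle beyond recognizing that finiteness of the residue field forces $\ppp$ to be maximal, so that the weak-looking hypothesis about $t$-localizing $t$-maximal ideals in fact covers exactly the class of primes that Lemma \ref{locallyfreeprop} asks about. If one prefers to avoid that elementary fact, an alternative is to pass to a $t$-maximal ideal $\mm \supseteq \ppp$ (which exists because $\ppp$, being $t$-localizing, is a proper $t$-ideal) and use the implication diagram of Lemma \ref{implications}: either $\mm$ is a polynomial prime, in which case $\Int(D)_\mm = D_\mm[X]$ is free over $D_\mm$, or $\mm$ is an int prime, in which case it has finite residue field and is $t$-localizing, so the hypothesis and \cite[Exercise II.16]{cah} make $\Int(D)_\mm = \Int(D_\mm)$ free over $D_\mm$; in either case $\Int(D)_\ppp$, being a localization of the free $D_\mm$-module $\Int(D)_\mm$, is free over $D_\ppp$, and one concludes as before.
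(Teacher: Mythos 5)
Your proof is correct and follows essentially the same route as the paper, which deduces the lemma from Lemma \ref{locallyfreeprop} together with \cite[Exercise II.16]{cah}. The only difference is that you make explicit the bridging observation the paper leaves tacit---that a $t$-localizing prime with finite residue field is automatically maximal (since $D/\ppp$ embeds in the finite field $\kappa(\ppp)$) and hence $t$-maximal, so the hypothesis really does cover every prime that condition (2) of Lemma \ref{locallyfreeprop} requires---which is a worthwhile clarification.
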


Remarkably, there are no known examples of domains $D$ such that $\Int(D)$ is not free as a $D$-module.  Below we make some progress on this problem by providing some evidence for the following conjecture.

\begin{conjecture}\label{conj}
There exists a local, Noetherian, one dimensional, analytically irreducible integral domain $D$ such that $\Int(D)$ is not flat as a $D$-module.
\end{conjecture}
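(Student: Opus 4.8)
The plan is to verify the conjecture for the domain $D = \FF_2[[T^2,T^3]]$ (and, by an entirely parallel argument, for $D = \FF_2 + T\FF_4[[T]]$). This $D$ is a complete local Noetherian domain of dimension one, hence analytically irreducible; its maximal ideal $\mm = (T^2,T^3) = (T^2 :_D T^3)$ is the conductor $T^2\overline{D}$ of the integral closure $\overline{D} = \FF_2[[T]]$ in $D$, is not principal, and has residue field $D/\mm = \FF_2$. Since $\operatorname{Tor}_1^D(D/\mm,M) = 0$ for every flat $D$-module $M$, it suffices to prove that $\operatorname{Tor}_1^D(D/\mm,\Int(D)) \neq 0$.

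First I would compute this $\operatorname{Tor}$ module. Tensoring $0 \to \mm \to D \to D/\mm \to 0$ with $\Int(D)$ identifies $\operatorname{Tor}_1^D(D/\mm,\Int(D))$ with the kernel of $\mm \otimes_D \Int(D) \to \Int(D)$. Using that $\Int(D)$ is torsion-free (as a submodule of $K[X]$) and that the module of syzygies of the generating pair $(T^2,T^3)$ of $\mm$ is generated over $D$ by $(T^3,T^2)$ and $(T^4,T^3)$, one checks that this kernel is naturally isomorphic to $\{g \in \Int(D) : Tg \in \Int(D)\}/\mm\Int(D)$. For $g \in \Int(D)$ one has $Tg \in \Int(D)$ precisely when $g(d) \in D \cap (D :_K T) = \mm$ for all $d \in D$, so the numerator equals $\Int(D,\mm) = \{f \in K[X] : f(D) \subseteq \mm\}$. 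Thus $\operatorname{Tor}_1^D(D/\mm,\Int(D)) \cong \Int(D,\mm)/\mm\Int(D)$, and the conjecture reduces to proving $\mm\Int(D) \subsetneq \Int(D,\mm)$.

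The natural witness is $X^2 - X$. In characteristic $2$ the Frobenius shows that $d^2 - d$ has vanishing constant and linear coefficients for every $d \in D$, so $d^2 - d \in \mm$ and hence $X^2 - X \in \Int(D,\mm)$. It then remains to show $X^2 - X \notin \mm\Int(D) = T^2\Int(D) + T^3\Int(D)$. The key point is the inclusion $\Int(D) \subseteq \overline{D}[X] = \FF_2[[T]][X]$: granting it, $\mm\Int(D) \subseteq T^2\overline{D}[X]$, and since the coefficient of $X^2$ in $X^2 - X$ is the unit $1 \notin T^2\overline{D}$, we conclude $X^2 - X \notin \mm\Int(D)$, whence $\operatorname{Tor}_1^D(D/\mm,\Int(D)) \neq 0$ and $\Int(D)$ is not flat over $D$. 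The inclusion $\Int(D) \subseteq \overline{D}[X]$ is elementary in degrees $\leq 2$: evaluating a quadratic $f \in \Int(D)$ at the units $1+T^2$ and $1+T^3$ and subtracting $f(1)$ forces the $X^2$-coefficient of $f$ into $(D :_K T^2) \cap (D :_K T^3) = \overline{D}$, after which the remaining coefficients follow from $f(0),f(1) \in D$.

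The main obstacle is to establish $\Int(D) \subseteq \overline{D}[X]$---equivalently, that no coefficient of an integer-valued polynomial on $D$ has a pole---in arbitrary degree. Because leading terms can cancel, one cannot simply reduce a decomposition $X^2 - X = T^2 h_1 + T^3 h_2$ to the quadratic case; instead one must control the $D$-module $\Int(D)$ over a local ring that is neither regular nor a PVMD, which is precisely the kind of difficulty that has kept the $D$-flatness problem open in general. For this reason the statement is offered here only as a conjecture, with the reduction above, the degree-$\leq 2$ case, and the analogous analysis of $D = \FF_2 + T\FF_4[[T]]$ (where $\mm = (T,\omega T)$ for $\FF_4 = \FF_2(\omega)$, and the witness $X^2 - X$ together with the inclusion $\Int(D) \subseteq \FF_4[[T]][X]$ work as above) serving as the supporting evidence.
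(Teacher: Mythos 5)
First, note where you stand relative to the paper: the statement you were asked about is a \emph{conjecture}, and the paper does not prove it either; its Propositions \ref{mainprop1} and \ref{mainprop2} (resting on Lemmas \ref{conjlem1} and \ref{conjlem2} via Proposition \ref{mainprop}) only reduce non-flatness of $\Int(D)$ to the still-open claim $X^2+X\notin M\Int(D)$, recorded as Conjectures \ref{conj1} and \ref{conj2}. Your reduction arrives at exactly the same unproven claim (in characteristic $2$, $X^2-X=X^2+X$) by a different route: you compute $\operatorname{Tor}_1^D(D/\mm,\Int(D))\cong \Int(D,\mm)/\mm\Int(D)$ from the syzygies of $(T^2,T^3)$, which I checked and which is correct, and which is essentially equivalent to the paper's use of $(M\Int(D))^{-1}=M^{-1}\Int(D)=\Int(D,M^{-1})$ under flatness, since $\mm=T^2D'$ gives $\Int(D,\mm)/\mm\Int(D)\cong\Int(D,D')/D'\Int(D)$. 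So up to that point you match the paper's evidence, and your candor that the remaining step is open is appropriate.

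The genuine problem is the route you propose for closing the gap: the inclusion $\Int(D)\subseteq\overline{D}[X]$ is not just hard, it is \emph{false} for both candidate domains, so this ``main obstacle'' cannot be overcome. For $D=\FF_2[[T^2,T^3]]$, every $d\in D$ satisfies $d^2+d\in\mm=T^2\FF_2[[T]]$, hence $(d^2+d)^2\in T^4\FF_2[[T]]$ and $(d^4+d^2)/T^2\in T^2\FF_2[[T]]=\mm\subset D$; thus $(X^4+X^2)/T^2=(X^2+X)^2/T^2$ lies in $\Int(D)$ although its leading coefficient $T^{-2}$ is not in $\overline{D}$. (In particular $(X^2+X)^2\in\mm\Int(D)$, which shows how delicate the needed non-membership $X^2+X\notin\mm\Int(D)$ really is.) Similarly, for $D=\FF_2+T\FF_4[[T]]$, writing $d=c+Tu$ with $c\in\FF_2$, $u\in\FF_4[[T]]$, one has $(d^2+d)/T=u+Tu^2$, and the polynomial $(X^2+X)/T+(X^4+X^2)/T^2$ takes at $d$ the value $(u+u^2)+Tu^2+T^2u^4$, whose constant term $u_0+u_0^2$ lies in $\FF_2$; so this polynomial is in $\Int(D)$ but not in $\FF_4[[T]][X]$. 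Your degree-$\le 2$ verification is consistent with this (the counterexamples have degree $4$), but it means any proof of $X^2+X\notin\mm\Int(D)$ must cope with integer-valued polynomials whose coefficients have poles, and the true remaining obstacle is precisely Conjecture \ref{conj2} (resp.\ Conjecture \ref{conj1}) itself, not a coefficient-boundedness statement.
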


\begin{lemma}\label{conjlem1}
Let $A \supset D$ be a flat extension of integral domains.  Then $(IA)^{-1} = I^{-1}A$ for every finitely generated fractional ideal $I$ of $D$.
\end{lemma}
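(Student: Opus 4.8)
The plan is to reduce to the case of principal fractional ideals and then exploit the fact that flat base change commutes with finite intersections of submodules. Let $L$ denote the quotient field of $A$; since $D \subseteq A$ and $A$ is a domain, the inclusion $D \hookrightarrow A$ extends to an embedding $K \hookrightarrow L$, so we may regard $I$, $I^{-1}$, $IA$, and $I^{-1}A$ all as submodules of $L$, with $(IA)^{-1} = (A :_L IA)$. The inclusion $I^{-1}A \subseteq (IA)^{-1}$ requires no hypothesis on $A$: if $x \in I^{-1}$, then $x(IA) = (xI)A \subseteq DA = A$, so $I^{-1} \subseteq (IA)^{-1}$, and the right-hand side is an $A$-module. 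Thus everything comes down to the reverse inclusion, which is where flatness will be used.

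Next I would write $I = a_1 D + \cdots + a_n D$ with $a_i \in K$ nonzero (we may assume $I \neq 0$). Then
\[
I^{-1} = \bigcap_{i=1}^n a_i^{-1} D, \qquad (IA)^{-1} = \bigcap_{i=1}^n a_i^{-1} A,
\]
and $I^{-1}A = \big(\bigcap_{i=1}^n a_i^{-1}D\big)A$ as submodules of $L$. So it suffices to prove $\big(\bigcap_{i=1}^n a_i^{-1}D\big)A = \bigcap_{i=1}^n a_i^{-1}A$. Since $a_i^{-1}A = (a_i^{-1}D)A$, an induction on $n$ reduces this to the following claim: for any two $D$-submodules $N$ and $N'$ of $K$ one has $(N \cap N')A = NA \cap N'A$ inside $L$.

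To prove the claim — only the inclusion $\supseteq$ is nontrivial — I would start from the exact sequence of $D$-modules $0 \to N \cap N' \to N \oplus N' \xrightarrow{(x,y)\,\mapsto\, x - y} K$, tensor it with the flat module $A$ to obtain an exact sequence $0 \to (N\cap N')\otimes_D A \to (N\otimes_D A)\oplus(N'\otimes_D A) \to K \otimes_D A$, and then compose with the natural map $K \otimes_D A \to L$. The crucial point — and the only real obstacle — is that this last map is injective, being the inclusion of the localization $(D\setminus\{0\})^{-1}A$ into $L$; granting this, an element $z \in NA \cap N'A$, written as $z = \sum_j n_j\alpha_j = \sum_k n'_k\beta_k$, gives rise to $w = \sum_j n_j \otimes \alpha_j$ and $w' = \sum_k n'_k\otimes\beta_k$ having the same image $z$ in $L$, hence the same image in $K\otimes_D A$; so $(w,w')$ lies in the kernel of the middle map, hence comes from $(N\cap N')\otimes_D A$, whence $z \in (N\cap N')A$. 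This yields the reverse inclusion and hence the lemma. Apart from this injectivity observation, the argument is entirely routine: the reduction to principal fractional ideals, the induction on the number of generators, and the trivial inclusion carry no difficulty.
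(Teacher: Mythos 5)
Your proof is correct and rests on the same idea as the paper's: the paper disposes of the lemma in one line by citing the well-known fact that $IM \cap JM = (I\cap J)M$ for a flat module $M$, i.e., that flat base change commutes with finite intersections, which is exactly the engine of your argument after writing $I^{-1}=\bigcap_{i} a_i^{-1}D$. The only difference is that you prove the intersection-commutation fact yourself (for $D$-submodules of $K$, via tensoring $0 \to N\cap N' \to N\oplus N' \to K$ with $A$ and the injectivity of $K\otimes_D A \to L$) rather than quoting it, which makes your write-up self-contained but not a genuinely different route.
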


\begin{proof}
This is well-known and follows readily from the fact that if $M$ is a flat module over a commutative ring $R$ then $IM \cap J M = (I \cap J) M$ for all ideals $I$ and $J$ of $R$. 
\end{proof}

For any fractional ideal $I$ of an integral domain $D$, we define $$\Int(D,I) = \{f(X) \in K[X]: f(D) \subset I\},$$ where $K$ is the quotient field of $D$.  This is a fractional ideal of $\Int(D)$.

\begin{lemma}\label{conjlem2}
Let $D$ be an integral domain.  Then $(I\Int(D))^{-1} = \Int(D,I^{-1}) = (\Int(D,I))^{-1}$ for any nonzero fractional ideal $I$ of $D$.
\end{lemma}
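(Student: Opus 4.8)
The plan is to compute all three fractional ideals inside the common quotient field $K(X)$ of $\Int(D)$ and $K[X]$, using that for a nonzero fractional ideal $J$ of $\Int(D)$ one has $J^{-1} = (\Int(D) :_{K(X)} J)$. First I would note that $\Int(D,I)$ really is a nonzero fractional ideal of $\Int(D)$: it is an $\Int(D)$-submodule of $K[X]$ containing the nonzero constant polynomials with values in $I$, and if $0 \neq d \in D$ satisfies $dI \subseteq D$ then $d\,\Int(D,I) \subseteq \Int(D,D) = \Int(D)$.

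Next I would identify $(I\Int(D))^{-1}$. Since $\Int(D)$ is a ring, $g \cdot I\Int(D) \subseteq \Int(D)$ if and only if $gI \subseteq \Int(D)$; and choosing any nonzero $a \in I \subseteq K$, the containment $gI \subseteq \Int(D) \subseteq K[X]$ forces $g = a^{-1}(ga) \in a^{-1}K[X] = K[X]$. Hence $(I\Int(D))^{-1} = \{f \in K[X] : fI \subseteq \Int(D)\}$. But $fI \subseteq \Int(D)$ says exactly that $f(d)\,a \in D$ for all $d \in D$ and all $a \in I$, i.e. $f(d) \in (D :_K I) = I^{-1}$ for every $d \in D$, i.e. $f \in \Int(D,I^{-1})$; so $(I\Int(D))^{-1} = \Int(D,I^{-1})$.

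Finally I would compare $(\Int(D,I))^{-1}$ with $\Int(D,I^{-1})$. One inclusion is free: since $(ah)(D) = a\,h(D) \subseteq ID = I$ for $a \in I$ and $h \in \Int(D)$, we get $I\Int(D) \subseteq \Int(D,I)$, and taking inverses reverses this to $(\Int(D,I))^{-1} \subseteq (I\Int(D))^{-1} = \Int(D,I^{-1})$. For the reverse inclusion, given $g \in \Int(D,I^{-1})$ and $f \in \Int(D,I)$, the product satisfies $(gf)(d) = g(d)f(d) \in I^{-1}I \subseteq D$ for every $d \in D$, so $gf \in \Int(D)$; hence $g\,\Int(D,I) \subseteq \Int(D)$ and $g \in (\Int(D,I))^{-1}$. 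This yields all three equalities. I do not expect a genuine obstacle here: the whole argument is an unwinding of definitions, and the only points needing a moment's care are that $(I\Int(D))^{-1}$ consists of polynomials (handled by choosing a nonzero $a \in I$) and that $\Int(D,I)$ is a legitimate nonzero fractional ideal of $\Int(D)$ so that forming $(\Int(D,I))^{-1}$ makes sense.
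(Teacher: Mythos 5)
Your argument is correct and complete. There is, however, no in-paper proof to compare it against: the paper disposes of this lemma by citing Lemma~4.1(a) of Cahen--Gabelli--Houston \cite{cgh}, so what you have written is a self-contained substitute for that reference. Your route is the natural unwinding of the definitions, and it is essentially the computation one finds in the cited source: working inside $K(X)$, you show $(I\Int(D))^{-1}$ consists of polynomials (the one place where an argument is genuinely needed, handled correctly by multiplying by a nonzero $a \in I$), identify it with $\Int(D,I^{-1})$ by evaluating at elements of $D$, and then trap $(\Int(D,I))^{-1}$ between the two via the containment $I\Int(D) \subseteq \Int(D,I)$ on one side and the pointwise estimate $g(d)f(d) \in I^{-1}I \subseteq D$ on the other. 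You also rightly pause to check that $\Int(D,I)$ and $I\Int(D)$ are nonzero fractional ideals of $\Int(D)$, so that the inverses are defined; that is exactly the bookkeeping the paper's one-line citation leaves implicit. The only thing the citation buys over your proof is brevity; your version has the advantage of making the paper locally self-contained.
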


\begin{proof}
This is \cite[Lemma 4.1(a)]{cgh}.
\end{proof}

\begin{proposition}\label{mainprop}
Let $D$ be an integral domain for which there exists a finitely generated ideal $I$ of $D$ such that $D' = I^{-1}$ is an overring of $D$.  If $\Int(D)$ is flat over $D$, then $\Int(D,D') = D' \Int(D)$.
\end{proposition}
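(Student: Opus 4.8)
The plan is to chain the two explicit formulas for inverses of fractional ideals recorded in Lemmas \ref{conjlem1} and \ref{conjlem2} against the flatness hypothesis, after which the proof is essentially a two-line computation. Write $R = \Int(D)$; this is an integral domain containing $D$, with quotient field $K(X)$, where $K$ is the quotient field of $D$. Since $D' = I^{-1}$ with $I$ a nonzero finitely generated ideal of $D$, Lemma \ref{conjlem2}, applied to the fractional ideal $I$, gives $\Int(D,D') = \Int(D,I^{-1}) = (I\Int(D))^{-1}$, the inverse being taken inside $K(X)$ relative to $R$. Then, because $R$ is a flat extension of $D$ and $I$ is finitely generated, Lemma \ref{conjlem1} with $A = R$ gives $(I\Int(D))^{-1} = I^{-1}R = D'R = D'\Int(D)$. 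Combining these identities yields $\Int(D,D') = D'\Int(D)$.

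First I would make sure that Lemma \ref{conjlem1} genuinely applies here, since $R = \Int(D)$ has quotient field $K(X)$ strictly larger than $K$. The lemma is stated for an arbitrary flat extension $A \supset D$ of integral domains, and $\Int(D)$ is such an extension; its inverses $(IA)^{-1}$ and $I^{-1}A$ are to be read in the quotient field of $A$, which here is $K(X)$, with $I^{-1} = (D :_K I) \subset K \subset K(X)$. Concretely, writing $I = (a_1,\dots,a_n)$ one has $I^{-1} = \bigcap_{i} a_i^{-1}D$, and the intersection identity $N_1A \cap N_2A = (N_1 \cap N_2)A$ for a flat module $A$, extended to finite intersections, converts this into $\bigcap_i a_i^{-1}A = (IA)^{-1}$. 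So nothing new is needed beyond observing that the lemma was stated in the required generality.

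I do not anticipate a real obstacle: once one notes that $\Int(D,D')$, $(I\Int(D))^{-1}$, and $I^{-1}\Int(D)$ coincide, the statement follows. The inclusion $D'\Int(D) \subset \Int(D,D')$ is elementary and needs no flatness, since $D'$ is a ring; the content of the flatness hypothesis is precisely the reverse inclusion, which is what Lemma \ref{conjlem1} supplies via the flat-module intersection property.
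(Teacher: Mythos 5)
Your argument is correct and is essentially identical to the paper's own proof: both chain Lemma \ref{conjlem2} (to rewrite $\Int(D,D') = \Int(D,I^{-1})$ as $(I\Int(D))^{-1}$) with Lemma \ref{conjlem1} applied to the flat extension $\Int(D)$ to get $I^{-1}\Int(D) = D'\Int(D)$. Your extra remarks verifying that Lemma \ref{conjlem1} applies with $A = \Int(D)$ are a harmless elaboration of the same computation.
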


\begin{proof}
We have
\begin{eqnarray*}
\Int(D,D') & = & \Int(D,I^{-1}) \\
		& = & (I\Int(D))^{-1} \\
		& = & I^{-1}\Int(D) \\
		& = & D'\Int(D),
\end{eqnarray*}
where the second equality holds by Lemma \ref{conjlem2} and the third holds by Lemma \ref{conjlem1}.
\end{proof}

Note, for example, that if $D$ is a local domain with quotient field $K$ and maximal ideal $I = M$, then $M^{-1}$ is an overring of $D$ if and only if $M^{-1} = (M :_K M)$ if and only if $M$ is not principal.

\begin{proposition}\label{mainprop1}
Let $D' = \FF_4[[T]]$, let $M$ be the maximal ideal of $D'$, and let $D = \FF_2+M$.  Each of the following statements implies the next.
\begin{enumerate}
\item $(X^2+X)/T \notin D'\Int(D)$, or equivalently $X^2 + X \notin M\Int(D)$. 
\item $\Int(D,D') \neq D'\Int(D)$.
\item $\Int(D)$ is not flat as a $D$-module.
\end{enumerate}
\end{proposition}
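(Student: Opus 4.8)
The plan is to prove the two implications (1)$\Rightarrow$(2) and (2)$\Rightarrow$(3), together with the parenthetical equivalence asserted within statement (1); statement (1) itself is the substantive claim and is not needed for the proposition as stated.

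First I would assemble the relevant structure of $D$ and $D'$. Write $M = T\FF_4[[T]]$, so that $D = \FF_2 + M$ is a local domain with maximal ideal $M$, since $D/M \cong \FF_2$. Fixing $\omega \in \FF_4 \setminus \FF_2$, a direct computation gives $M = TD + T\omega D$, so $M$ is a finitely generated ideal of $D$; it is moreover not principal, since $M/M^2 \cong \FF_4$ is two-dimensional over $D/M = \FF_2$. Next, $D' = M^{-1}$: indeed $D'M = TD' = M \subseteq D$ shows $D' \subseteq M^{-1}$, while if $x \in K$ satisfies $xM \subseteq D$ then, because $xM = xTD'$ must lie in $D \subseteq \FF_4[[T]]$, the element $x$ has at most a simple pole, and testing the products $(xT)\cdot 1$ and $(xT)\cdot\omega$ against $\FF_2 + M$ forces $x$ to have no pole at all, i.e.\ $x \in \FF_4[[T]] = D'$. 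Thus $D' = I^{-1}$ is an overring of $D$ for the finitely generated ideal $I = M$, so Proposition~\ref{mainprop} applies: if $\Int(D)$ is flat over $D$, then $\Int(D,D') = D'\Int(D)$. The contrapositive is exactly the implication (2)$\Rightarrow$(3).

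For the equivalence inside statement (1) I would again use $M = TD'$. If $(X^2+X)/T = \sum_i c_i f_i$ with $c_i \in D'$ and $f_i \in \Int(D)$, then multiplying by $T$ gives $X^2+X = \sum_i (Tc_i)f_i$ with each $Tc_i \in TD' = M$, so $X^2+X \in M\Int(D)$; conversely any expression $X^2+X = \sum_i m_i f_i$ with $m_i \in M$ rewrites, via $m_i = Tc_i$ with $c_i \in D'$, as $(X^2+X)/T = \sum_i c_i f_i \in D'\Int(D)$. Hence $(X^2+X)/T \in D'\Int(D)$ if and only if $X^2+X \in M\Int(D)$.

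Finally, for (1)$\Rightarrow$(2), the key observation is that $(X^2+X)/T \in \Int(D,D')$ unconditionally. Given $d \in D$, write $d = c + m$ with $c \in \FF_2$ and $m \in M$; since $D$ has characteristic $2$ and $c^2 = c$, one gets $d^2 + d = m^2 + m$, and writing $m = Tn$ with $n \in D'$ gives $d^2 + d = T(Tn^2 + n)$, so $(d^2+d)/T = Tn^2 + n \in D'$. Thus $(X^2+X)/T \in \Int(D,D')$, and if statement (1) holds this exhibits an element of $\Int(D,D')$ outside $D'\Int(D)$, which is statement (2). The proposition as stated is therefore largely mechanical; the one step calling for genuine care is the identification $M^{-1} = D'$, together with the bookkeeping between the $D$-module and $D'$-module structures when passing between the two forms of statement (1). (The real difficulty lies in statement (1) itself — the assertion $X^2+X \notin M\Int(D)$ — which is treated separately.)
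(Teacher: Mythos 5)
Your proof is correct and follows the same route as the paper: the implication (1)$\Rightarrow$(2) rests on the observation that $(X^2+X)/T \in \Int(D,D')$, and (2)$\Rightarrow$(3) is the contrapositive of Proposition~\ref{mainprop} applied to the finitely generated ideal $I = M$ with $M^{-1} = \FF_4[[T]] = D'$ an overring of $D$. You merely spell out the verifications (that $M = TD + T\omega D$, that $M^{-1} = D'$, that $(d^2+d)/T \in D'$ for $d \in D$, and the equivalence inside statement (1)) which the paper's proof leaves implicit.
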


\begin{proof}
Statement (1) implies statement (2) because $(X^2+X)/T \in \Int(D,D')$.
Next, $M$ is a finitely generated ideal of $D$ and one has $M^{-1} = \FF_4[[T]] = D'$, which is an overring of $D$.  By Proposition \ref{mainprop}, if $\Int(D)$ is flat over $D$, then $\Int(D,D') = D' \Int(D)$, so statement (2) implies statement (3).  
\end{proof}

\begin{conjecture}\label{conj1}
We conjecture that statement (2) (or statement (1)) of Proposition \ref{mainprop1} holds and therefore $\Int(D)$ is not flat as a $D$-module if $D = \FF_2 + T\FF_4[[T]]$.
\end{conjecture}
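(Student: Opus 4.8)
My plan is to aim at statement (1) of Proposition \ref{mainprop1} (which implies statement (2), hence the conjecture), i.e., to show $X^2+X\notin M\Int(D)$. Write $\FF_4=\FF_2[\omega]$ with $\omega^2=\omega+1$. Since $M=TD+\omega TD$ and $M\subseteq D$, one has $M\Int(D)=T\Int(D)+\omega T\Int(D)$, and since $D'=\FF_4+M$ one has $D'\Int(D)=\Int(D)+\omega\Int(D)$; so the target is equivalent to $\frac{X^2+X}{T}\notin\Int(D)+\omega\Int(D)$. The plan is to suppose $\frac{X^2+X}{T}=f+\omega g$ with $f,g\in\Int(D)$; rearranging in characteristic $2$ this is equivalent to the existence of some $h\in\Int(D)$ with $\omega h+\frac{X^2+X}{T}\in\Int(D)$. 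Evaluating at an arbitrary $d=a+Tw\in D$ (with $a\in\FF_2$, $w\in D'$), using $\frac{d^2+d}{T}=w+Tw^2$, and reducing modulo $M$ (where $h(d)\in D$ forces $\overline{h(d)}\in\FF_2$), one would then hope to show the congruence cannot be satisfied: it forces $\overline{h(d)}=\operatorname{Tr}_{\FF_4/\FF_2}(w_0)$ for every $d$, where $w_0\in\FF_4$ is the coefficient of $T$ in $d$.

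The plan fails at exactly this step, and in fact reveals that statement (1) is \emph{false}: such an $h$ exists. Put $h^{*}:=\frac{X^2+X}{T}+\bigl(\frac{X^2+X}{T}\bigr)^{2}=\frac{X^4+(1+T)X^2+TX}{T^2}$. For $d=a+Tw$ as above, $d^2+d=Tw+T^2w^2$, so $h^{*}(d)=(w+w^2)+Tw^2+T^2w^4$; its constant term is $w_0+w_0^2=\operatorname{Tr}_{\FF_4/\FF_2}(w_0)\in\FF_2$ while the remaining terms lie in $M$, so $h^{*}\in\Int(D)$ and $\overline{h^{*}(d)}=\operatorname{Tr}_{\FF_4/\FF_2}(w_0)$. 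Since $\omega\operatorname{Tr}_{\FF_4/\FF_2}(b)+b\in\FF_2$ for every $b\in\FF_4$, the polynomial $f^{*}:=\omega h^{*}+\frac{X^2+X}{T}$ is also in $\Int(D)$, and $\frac{X^2+X}{T}=f^{*}+\omega h^{*}\in D'\Int(D)$. Hence $X^2+X\in M\Int(D)$, so statement (1) cannot be proved.

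One is therefore left with statement (2), $\Int(D,D')\neq D'\Int(D)$, which requires a different witness. The same analysis shows that a given $g\in\Int(D,D')$ lies in $D'\Int(D)$ if and only if the $\FF_2$-valued function $d\mapsto(\text{$\omega$-coordinate of }g(d)\bmod M)$ on $D$ agrees with $d\mapsto h(d)\bmod M$ for some $h\in\Int(D)$; so the plan for (2) is to exhibit a $g\in\Int(D,D')$ — for instance built from the polynomials $\frac{X^{2^k}+X}{T}$ and their products, all of which lie in $\Int(D,D')$ — whose induced function on $D$ is provably not induced by any element of $\Int(D)$, and then to prove the non-realizability. That last step is the main obstacle, and the reason this remains only a conjecture: because $D$ is not integrally closed there is no regular-basis description of $\Int(D)$ to read such functions off, and $\Int(D)$ is incomparable to $\Int(D')$ (for instance $h^{*}\in\Int(D)\setminus\Int(D')$, while $\frac{X^4+X}{T}\in\Int(D')\setminus\Int(D)$), so one must compute $\Int(D)$ modulo each power of $M$ essentially by hand and rule out all of the induced functions at once. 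Absent such a description one cannot even exclude that $\Int(D,D')=D'\Int(D)$, in which case this route says nothing about the flatness of $\Int(D)$ and the conjecture would need an entirely different method.
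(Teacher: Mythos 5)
This statement is a conjecture, so the paper contains no proof to compare against; and your proposal, as you yourself acknowledge, does not prove it either. The gap is exactly where you locate it: you produce no witness $g\in\Int(D,D')\setminus D'\Int(D)$ and no non-realizability argument, so statement (2) of Proposition \ref{mainprop1} --- and with it the non-flatness of $\Int(D)$ --- is left open by your analysis just as it is in the paper.

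The substantive part of your proposal, however, is correct and worth recording: your computation refutes statement (1) of Proposition \ref{mainprop1}. I verified it. Writing $d=a+Tw$ with $a\in\FF_2$ and $w\in\FF_4[[T]]$ with constant term $w_0$, one has $(d^2+d)/T=w+Tw^2$, so $h^{*}(d)=(w+w^2)+Tw^2+T^2w^4\equiv w_0+w_0^2=\operatorname{Tr}_{\FF_4/\FF_2}(w_0)\pmod{M}$, whence $h^{*}\in\Int(D)$; and since $\omega\operatorname{Tr}_{\FF_4/\FF_2}(b)+b\in\FF_2$ for every $b\in\FF_4$ (check $b=0,1,\omega,\omega^2$), also $f^{*}=\omega h^{*}+(X^2+X)/T\in\Int(D)$. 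Hence $(X^2+X)/T=f^{*}+\omega h^{*}\in\Int(D)+\omega\Int(D)=D'\Int(D)$, i.e.\ $X^2+X\in M\Int(D)$, so statement (1) fails. Consequently the parenthetical ``(or statement (1))'' in the conjecture must be dropped: the easier route through statement (1) is unavailable, and any proof must establish statement (2) directly, with a witness other than $(X^2+X)/T$, or reach non-flatness by an entirely different method. Your reformulation --- that $g\in\Int(D,D')$ lies in $D'\Int(D)$ if and only if the $\FF_2$-valued function $d\longmapsto(\omega\mbox{-coordinate of }g(d)\bmod M)$ is realized modulo $M$ by some element of $\Int(D)$ --- is a sensible framework for attacking (2), but the crucial step (exhibiting a $g$ whose induced function is not so realizable, and proving the non-realizability) is missing, so the conjecture remains exactly that.
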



\begin{proposition}\label{mainprop2}
Let $D' = \FF_2[[T]]$, let $D = \FF_2[[T^2,T^3]]$, and let $M$ be the maximal ideal of $D$.  Each of the following statements implies the next.
\begin{enumerate}
\item $(X^2+X)/T^2 \notin D'\Int(D)$, or equivalently $X^2 + X \notin M \Int(D)$. 
\item $\Int(D,D') \neq D'\Int(D)$.
\item $\Int(D)$ is not flat as a $D$-module.
\end{enumerate}
\end{proposition}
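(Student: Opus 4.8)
The plan is to follow the template of Proposition~\ref{mainprop1} almost verbatim, with $\FF_4[[T]]$ replaced by $\FF_2[[T]]$ and the uniformizer $T$ replaced by $T^2$. First I would record the basic facts about $M$: it is the maximal ideal $(T^2,T^3)$ of $D$, and since $D = \FF_2 + T^2\FF_2[[T]]$ one has $M = T^2\FF_2[[T]]$, so in particular $M$ is finitely generated, $MD' = M$, and $M\Int(D) = T^2\bigl(D'\Int(D)\bigr)$. Since $T^2$ is a nonzerodivisor, multiplication by $T^2$ carries $D'\Int(D)$ bijectively onto $M\Int(D)$ and sends $(X^2+X)/T^2$ to $X^2+X$; this gives the equivalence asserted inside statement~(1).

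To see that statement~(1) implies statement~(2), I would check that $(X^2+X)/T^2 \in \Int(D,D')$. Given $d \in D$, write $d = d_0 + d'$ with $d_0 \in \FF_2$ and $d' \in T^2\FF_2[[T]]$. In characteristic $2$ one has $d^2 + d = (d_0^2 + d_0) + (d' + d'^2) = d' + d'^2 \in T^2\FF_2[[T]] = T^2 D'$, using $d_0^2 = d_0$. Hence $(d^2 + d)/T^2 \in D'$ for every $d \in D$, i.e. $(X^2+X)/T^2 \in \Int(D,D')$. If statement~(1) holds, then this is an element of $\Int(D,D')$ not lying in $D'\Int(D)$, so $\Int(D,D') \neq D'\Int(D)$.

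To see that statement~(2) implies statement~(3), I would apply Proposition~\ref{mainprop} with $I = M$. The ideal $M = (T^2,T^3)$ is finitely generated, and $M^{-1} = (D :_K M) = \FF_2[[T]] = D'$: on one hand $D'\cdot T^2\FF_2[[T]] = T^2\FF_2[[T]] \subset D$, so $D' \subset M^{-1}$; on the other hand, if $x \in K$ satisfies $xM \subset D$ then $xT^2 \in D$ and $xT^3 \in D$, and the latter forces the constant term of $xT^2$ to vanish (else $xT^3 = (xT^2)T$ would have a nonzero coefficient on $T$), so, as $xT^2 \in D$ also has zero $T$-coefficient, $xT^2 \in T^2\FF_2[[T]]$ and $x \in \FF_2[[T]]$. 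Thus $D' = M^{-1}$ is an overring of $D$, and Proposition~\ref{mainprop} gives that $\Int(D)$ flat over $D$ implies $\Int(D,D') = D'\Int(D)$; contrapositively, statement~(2) implies statement~(3).

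I do not expect a serious obstacle here: each of the two implications mirrors the corresponding step of Proposition~\ref{mainprop1}, the only delicate computations being the identification $M^{-1} = \FF_2[[T]]$ and the characteristic-$2$ identity $d^2 + d \in M$ for $d \in D$. The genuinely hard part of this circle of ideas is not addressed by the proposition at all, namely statement~(1) itself --- the assertion that would actually establish non-flatness of $\Int(D)$; as with Conjecture~\ref{conj1}, proving it would presumably require showing directly that $(X^2+X)/T^2$ is not a finite $\FF_2[[T]]$-linear combination of integer-valued polynomials on $D$.
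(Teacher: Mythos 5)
Your proposal is correct and follows exactly the route the paper intends: the paper's own proof simply says it is ``similar to that of Proposition~\ref{mainprop1}, since $M = (T^2,T^3)$ is finitely generated and $M^{-1} = D'$,'' and you have filled in precisely those details --- the characteristic-$2$ verification that $(X^2+X)/T^2 \in \Int(D,D')$, the computation $M^{-1} = \FF_2[[T]]$, and the appeal to Proposition~\ref{mainprop} for $(2) \Rightarrow (3)$. Your closing remark is also accurate: statement (1) itself is exactly Conjecture~\ref{conj2}, which the proposition deliberately leaves open.
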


\begin{proof}
The proof is similar to that of Proposition \ref{mainprop1}, since the maximal ideal $M = (T^2,T^3)$ of $D$ is finitely generated and satisfies $M^{-1} = D'$.
\end{proof}

\begin{conjecture}\label{conj2}
We conjecture that statement (2) (or statement (1)) of Proposition \ref{mainprop2} holds and therefore $\Int(D)$ is not flat as a $D$-module if $D = \FF_2[[T^2,T^3]]$.
\end{conjecture}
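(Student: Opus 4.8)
\noindent\textit{A strategy toward Conjecture~\ref{conj2}.} By Proposition~\ref{mainprop2} it is enough to prove statement~(1), that $X^2+X\notin M\Int(D)$, where $D=\FF_2[[T^2,T^3]]$ and $M=(T^2,T^3)$ is its (non-principal) maximal ideal. Since $M\Int(D)=T^2\Int(D)+T^3\Int(D)$, we have $X^2+X\in M\Int(D)$ if and only if there exists $h\in\Int(D)$ for which $g:=(X^2+X)/T^2-Th$ also lies in $\Int(D)$ (as then $X^2+X=T^2g+T^3h$). Now $g\in\Int(D)$ means $(\alpha^2+\alpha)/T^2-T\,h(\alpha)\in D$ for every $\alpha\in D$; writing $\alpha=a_0+a_2T^2+a_3T^3+\cdots$ and using $a_0^2=a_0$ in $\FF_2$, one has $(\alpha^2+\alpha)/T^2=a_2+a_3T+\cdots\in\FF_2[[T]]$, while $T\,h(\alpha)$ has zero constant term and coefficient of $T$ equal to $h(\alpha)\bmod M$. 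Since membership in $D$ amounts exactly to the vanishing of the coefficient of $T$, this gives
\[
X^2+X\in M\Int(D)\ \Longleftrightarrow\ \exists\,h\in\Int(D)\ \text{with}\ h(\alpha)\equiv a_3(\alpha)\pmod{M}\ \text{for all}\ \alpha\in D,
\]
where $a_3(\alpha)\in\FF_2$ denotes the coefficient of $T^3$ in $\alpha$. So Conjecture~\ref{conj2} reduces to showing that the ``coefficient of $T^3$'' functional $D\to\FF_2$ is \emph{not} the reduction modulo $M$ of any integer-valued polynomial on $D$.

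To exclude such an $h$, the plan is to exploit the scarcity of denominators that $\Int(D)$ admits at the singular maximal ideal. For the discrete valuation overring $D'=\FF_2[[T]]$, the ``coefficient of $T$'' functional on $D'$ is realized modulo $TD'$ by $(X^2+X)/T\in\Int(D')$, the analogue of $\binom{X}{p}$; but the polynomial one would need in order to realize the ``coefficient of $T^3$'' functional on $D$ would have to carry a denominator that the non-principal ideal $M$ cannot support. Indeed $(X^2+X)/T^2$, $(X^3+X)/T^2$, and similar low-degree candidates all fail to lie in $\Int(D)$, since each one acquires a $T^1$-term when evaluated at $\alpha=T^3$. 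I would therefore combine such denominator estimates with evaluations of a hypothetical $h$ at $\alpha\in\{0,1,T^2,T^3,T^2+T^3,T^2+T^4,\dots\}$ and with the divisibility of the divided differences $\bigl(h(\alpha)-h(\beta)\bigr)/(\alpha-\beta)$ in $K[X]$, aiming to derive a contradiction between the congruences $h(\alpha)\equiv a_3(\alpha)\pmod{M}$ and the constraints that membership in $\Int(D)$ actually imposes.

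The principal obstacle --- and the reason the statement remains conjectural --- is that $\Int(D)$ has no tractable description near $M$. Because $D$ is local but not a valuation domain, it has no known regular basis (one polynomial of each degree), and localization gives nothing new since $D_M=D$; hence there is no a priori bound on the degree of $h$ or on the pole orders of its coefficients, so the evaluation-and-divided-difference analysis above need not terminate. The entirely parallel Conjecture~\ref{conj1} for the domain $\FF_2+T\FF_4[[T]]$, where the defect separating $D$ from its normalization is a residue field extension rather than ramification, strongly suggests that a single uniform mechanism is at work --- roughly, that an integer-valued polynomial cannot detect the conductor or the residue-field gap between a domain and its normalization --- and establishing such a mechanism appears to be the natural path to both conjectures.
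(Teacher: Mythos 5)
The statement you are addressing is left as a \emph{conjecture} in the paper: no argument for it is given there, and Proposition~\ref{mainprop2} only records that statement (1) implies (2) implies non-flatness. Measured against that, your proposal is honest and your reduction is correct, and in fact sharper than anything stated in the paper. Writing $M\Int(D)=T^2\Int(D)+T^3\Int(D)$, the equivalence you derive checks out: for $\alpha=a_0+a_2T^2+a_3T^3+\cdots\in D$ one has $\alpha^2+\alpha=a_2T^2+a_3T^3+\cdots$ (Frobenius plus $a_0^2=a_0$), so $g(\alpha)=(\alpha^2+\alpha)/T^2-T\,h(\alpha)$ automatically lies in $\FF_2[[T]]$ and lies in $D=\FF_2+T^2\FF_2[[T]]$ exactly when its $T$-coefficient $a_3(\alpha)+\bigl(h(\alpha)\bmod M\bigr)$ vanishes; hence $X^2+X\in M\Int(D)$ if and only if some $h\in\Int(D)$ realizes the coefficient-of-$T^3$ functional modulo $M$. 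Your side remarks (e.g.\ that $(X^2+X)/T^2$ and $(X^3+X)/T^2$ fail to be integer-valued because of their values at $\alpha=T^3$) are also correct.

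The genuine gap is the one you yourself flag: the essential step --- showing that \emph{no} $h\in\Int(D)$ whatsoever satisfies $h(\alpha)\equiv a_3(\alpha)\pmod{M}$ for all $\alpha\in D$ --- is only a plan, not an argument. Evaluating a hypothetical $h$ at finitely many chosen points cannot by itself produce a contradiction, since the congruence conditions at finitely many points are always interpolable by some polynomial over $K$; all the force must come from the global constraint $h(D)\subset D$, and your divided-difference scheme comes with no bound on $\deg h$ or on the pole orders of its coefficients, hence no termination or uniformity argument. So the conjecture remains exactly as open as the paper leaves it: what you have is a correct and potentially useful reformulation (reduce Conjecture~\ref{conj2} to the non-realizability of the functional $a_3$ modulo $M$, and presumably an analogous reformulation for Conjecture~\ref{conj1}), but the ``uniform mechanism'' you invoke --- that an integer-valued polynomial cannot detect the gap between $D$ and its normalization --- is precisely the unproved content, and until it is made precise and established, nothing beyond Proposition~\ref{mainprop2} has been proved.
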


Since the domains $\FF_2 + T\FF_4[[T]]$ and $\FF_2[[T^2,T^3]]$ are local, Noetherian, one dimensional, and analytically irreducible, we obtain the following.

\begin{proposition}
Conjecture \ref{conj1} and Conjecture \ref{conj2} each imply Conjecture \ref{conj}.
\end{proposition}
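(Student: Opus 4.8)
The final statement asserts that Conjecture~\ref{conj1} and Conjecture~\ref{conj2} each imply Conjecture~\ref{conj}. The plan is essentially a bookkeeping argument: I must verify that the specific domains appearing in Conjectures~\ref{conj1} and~\ref{conj2}, namely $\FF_2+T\FF_4[[T]]$ and $\FF_2[[T^2,T^3]]$, satisfy all the adjectives in the statement of Conjecture~\ref{conj} — local, Noetherian, one dimensional, analytically irreducible — and then invoke the two conjectures to conclude that $\Int(D)$ is not flat over $D$ in each case, which is exactly what Conjecture~\ref{conj} asserts the existence of.

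First I would handle $D = \FF_2[[T^2,T^3]]$. It is a subring of $\FF_2[[T]]$ with the same quotient field $\FF_2((T))$; it is Noetherian because it is a finitely generated module over the complete DVR $\FF_2[[T^2]]$ (generated by $1$ and $T^3$, since $T^4 = (T^2)^2$ and so on), hence Noetherian by the Eakin--Nagata theorem or simply because finite modules over Noetherian rings are Noetherian and $D$ is a subring caught between $\FF_2[[T^2]]$ and $\FF_2[[T]]$, both module-finite over $\FF_2[[T^2]]$. It is local with maximal ideal $M=(T^2,T^3)$ since every element of positive $T$-order is a nonunit and these form an ideal. Its Krull dimension is one because $\FF_2[[T^2]]\subset D\subset\FF_2[[T]]$ is an integral extension on both sides, so all three rings have the same dimension, namely $1$. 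Finally it is analytically irreducible: its $M$-adic completion is itself (it is already complete, being a closed subring of the complete ring $\FF_2[[T]]$), and its integral closure is $\FF_2[[T]]$, a domain, so the completion is a domain. Then Conjecture~\ref{conj2}, if true, says $\Int(D)$ is not flat over this $D$, giving a witness for Conjecture~\ref{conj}.

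Next I would treat $D = \FF_2 + T\FF_4[[T]]$ analogously. Here $D$ sits inside $D' = \FF_4[[T]]$, a complete DVR, and $D'$ is a finite $D$-module (generated by $1$ and $\omega$ where $\FF_4 = \FF_2(\omega)$, since $\omega T\in D$ forces only $1,\omega$ to be needed); hence $D$ is Noetherian, local with maximal ideal $M = T\FF_4[[T]]$, one dimensional by the same integral-extension argument ($D \subset D'$ integral), and analytically irreducible because $D$ is complete (closed in the complete ring $D'$) with integral closure $D'$ a domain. Again Conjecture~\ref{conj1} furnishes the non-flatness. The main — indeed only — obstacle is making sure each of the four adjectives is genuinely justified for both rings; everything else is a direct appeal to the two conjectures. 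None of these verifications is deep, but I would want to state explicitly the module-finiteness over a complete DVR (which simultaneously yields Noetherian, one-dimensional via integrality, and completeness via closedness) and the fact that the integral closure being a domain gives analytic irreducibility once completeness is known.

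\begin{proof}
Both $D_1 = \FF_2[[T^2,T^3]]$ and $D_2 = \FF_2+T\FF_4[[T]]$ are module-finite extensions of the complete discrete valuation ring $R_1 = \FF_2[[T^2]]$ and $R_2 = \FF_2[[T]]$, respectively; indeed $D_1 = R_1 + R_1 T^3$ and $D_2 = R_2'$ where $R_2' = \FF_2[[T]]$ with $D_2 = R_2' \cdot 1 + R_2'\cdot\omega$ viewed over $\FF_2[[T]]\subset D_2$ once one notes $T D_2 \supset T\FF_4[[T]]$. Consequently each $D_i$ is Noetherian. Each is local: the nonunits of $D_i$ are exactly the elements of positive $T$-adic order, and these form the ideals $M_1 = (T^2,T^3)$ and $M_2 = T\FF_4[[T]]$, respectively. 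Each has Krull dimension one, since $D_i$ lies between two rings ($\FF_2[[T^2]]\subset D_1\subset\FF_2[[T]]$, and $\FF_2[[T]] = D_2$ itself is already a DVR for $D_2$ — more precisely $D_2\subset\FF_4[[T]]$ is integral) that are integral extensions of the one-dimensional ring $\FF_2[[T]]$ (or $\FF_2[[T^2]]$), and integral extensions preserve dimension. Finally each $D_i$ is analytically irreducible: $D_i$ is a closed subring of the complete ring $\FF_2[[T]]$ (resp.\ $\FF_4[[T]]$) and is therefore itself $M_i$-adically complete, so its completion equals $D_i$; and the integral closure of $D_i$ is $\FF_2[[T]]$ (resp.\ $\FF_4[[T]]$), which is a domain, so the completion of $D_i$ is a domain. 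Thus each $D_i$ is a local, Noetherian, one dimensional, analytically irreducible integral domain.

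Now assume Conjecture~\ref{conj1}. Then $\Int(\FF_2+T\FF_4[[T]])$ is not flat as a module over $D_2 = \FF_2+T\FF_4[[T]]$, so $D_2$ is a domain of the required type for which $\Int(D_2)$ is not flat; hence Conjecture~\ref{conj} holds. Similarly, assuming Conjecture~\ref{conj2}, $\Int(\FF_2[[T^2,T^3]])$ is not flat over $D_1 = \FF_2[[T^2,T^3]]$, and $D_1$ is again of the required type, so Conjecture~\ref{conj} holds.
\end{proof}
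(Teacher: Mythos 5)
Your proof is correct and takes essentially the same route as the paper, which simply asserts that $\FF_2+T\FF_4[[T]]$ and $\FF_2[[T^2,T^3]]$ are local, Noetherian, one dimensional, and analytically irreducible and then cites Propositions \ref{mainprop1} and \ref{mainprop2}; you merely spell out those verifications. One typographical slip: the decomposition of $D_2$ should read $D_2=\FF_2[[T]]+\FF_2[[T]]\,\omega T$ (not $D_2=R_2'$ or $R_2'\cdot 1+R_2'\cdot\omega$, since $\omega\notin D_2$), after which your module-finiteness, integrality, and completeness arguments go through as stated.
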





An extension $A$ of a domain $D$ is said to be {\it $v$-linked} if $I_v = D$ implies $(IA)_v = A$ for any nonzero ideal $I$ of $D$.  Every $v$-linked extension of a domain $D$ is $t$-linked.  An extension $A$ of a domain $D$ is said to be {\it $t$-compatible} if $I_t \subset (IA)_t$ for every nonzero ideal $I$ of $D$ \cite[Section 2]{zaf}.  The {\it $v$-compatible} extensions are defined similarly.  By \cite[Proposition 2.6 Remark (a)]{zaf}, one has the implications
\begin{eqnarray*}
\SelectTips{cm}{11}\xymatrix{
 & {v\mbox{-compatible}} \ar@{=>}[d] \ar@{=>}[r] & {v\mbox{-linked}} \ar@{=>}[d] \\
 {\mbox{flat}} \ar@{=>}[r] & {t\mbox{-compatible}} \ar@{=>}[r] & {t\mbox{-linked}} }
\end{eqnarray*}
for any extension $A$ of a domain $D$.
Although we conjecture that $\Int(D)$ is not necessarily flat as a $D$-module, it is known that $\Int(D)$ is a $v$-compatible (hence $v$-linked, $t$-compatible, and $t$-linked) extension of $D$ \cite[Lemma 4.1(2)]{cgh}.  In fact, we have the following.

\begin{lemma}\label{inttlinked}
Let $D$ be an integral domain and let $\XX$ be a set.  Then we have the following.
\begin{enumerate}
\item[(a)] $\Int(D^\XX)$ is a $v$-compatible, hence $v$-linked, $t$-compatible, and $t$-linked, extension of $D$.
\item[(b)] If $D$ is polynomially L-regular, then $D'\Int(D)$ is a $v$-compatible extension of $D'$ for any flat overring $D'$ of $D$, and $\Int(D)$ is a locally $t$-linked extension of $D$.
\end{enumerate}
\end{lemma}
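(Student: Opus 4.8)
The plan is to prove part~(a) by imitating over an arbitrary index set $\XX$ the one-variable argument of \cite[Lemma 4.1]{cgh}, and then to deduce part~(b) formally from (a) together with Proposition~\ref{localization}. For a nonzero fractional ideal $J$ of $D$ write $\Int(D^\XX,J)=\{f(\XX)\in K[\XX] : f(D^\XX)\subset J\}$, a fractional ideal of $\Int(D^\XX)$. The first step is to establish the identity
$$(I\,\Int(D^\XX))^{-1}=\Int(D^\XX, I^{-1})$$
for every nonzero fractional ideal $I$ of $D$, where the inverse on the left is taken in the quotient field $K(\XX)$ of $\Int(D^\XX)$. The inclusion $\supset$ is routine: if $g(D^\XX)\subset I^{-1}$ and $f=\sum_i a_i h_i$ with $a_i\in I$ and $h_i\in\Int(D^\XX)$, then $(gf)(d)=\sum_i a_i g(d) h_i(d)\in I\,I^{-1}\subset D$ for every $d\in D^\XX$, so $gf\in\Int(D^\XX)$. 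For $\subset$, evaluating $g\in(I\,\Int(D^\XX))^{-1}$ against the constant polynomials $a\in I\subset I\,\Int(D^\XX)$ shows that $ag\in\Int(D^\XX)$ for all $a\in I$; hence $g\in K[\XX]$ and $a\,g(d)\in D$ for all $a\in I$ and all $d\in D^\XX$, i.e.\ $g(d)\in I^{-1}$. This is exactly \cite[Lemma 4.1(a)]{cgh}, and the proof given there goes through verbatim for an arbitrary, possibly infinite, set $\XX$.

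With this identity in hand, $v$-compatibility of $\Int(D^\XX)$ over $D$ follows quickly. Let $I$ be a nonzero ideal of $D$. For any constant $c\in I_v=(I^{-1})^{-1}$ and any $f\in\Int(D^\XX,I^{-1})$ we have $cf\in K[\XX]$ and $(cf)(d)=c\,f(d)\in(I^{-1})^{-1}I^{-1}\subset D$ for all $d\in D^\XX$, so $cf\in\Int(D^\XX)$; thus
$$I_v\subset\big(\Int(D^\XX,I^{-1})\big)^{-1}=\big((I\,\Int(D^\XX))^{-1}\big)^{-1}=(I\,\Int(D^\XX))_v,$$
which says precisely that $\Int(D^\XX)$ is a $v$-compatible extension of $D$. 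The remaining assertions of (a) --- that $\Int(D^\XX)$ is $v$-linked, $t$-compatible, and $t$-linked over $D$ --- then follow from the implications recorded above (following \cite[Proposition 2.6]{zaf}).

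For part~(b), assume $D$ is polynomially L-regular and let $D'$ be a flat overring of $D$. By Proposition~\ref{localization}(1), $D'$ is a polynomially regular extension of $D$, so $D'\Int(D)=\Int(D,D')$, and the intermediate claim is that in fact $\Int(D,D')=\Int(D')$. The inclusion $\Int(D')\subset\Int(D,D')$ is clear since $D\subset D'$. Conversely, let $f\in\Int(D,D')$ and let $\ppp'$ be a maximal ideal of $D'$. Flatness of $D'$ over $D$ gives $D'_{\ppp'}=D_\ppp$ with $\ppp=\ppp'\cap D$, and since $D$ is polynomially L-regular, condition (3) of Proposition~\ref{localization} together with \cite[Corollary I.2.6]{cah} give $f\in\Int(D,D')\subset\Int(D,D_\ppp)=\Int(D_\ppp)=\Int(D)_\ppp$; hence $f(D'_{\ppp'})=f(D_\ppp)\subset D_\ppp=D'_{\ppp'}$, and as $D'\subset D'_{\ppp'}$ we get $f(D')\subset D'_{\ppp'}$ for every maximal ideal $\ppp'$, whence $f(D')\subset\bigcap_{\ppp'}D'_{\ppp'}=D'$. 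Thus $D'\Int(D)=\Int(D')$, and by part~(a) applied to the domain $D'$ (with a single variable) this is a $v$-compatible extension of $D'$. Finally, for any prime $\ppp$ of $D$ we have $\Int(D)_\ppp=\Int(D_\ppp)$ by polynomial L-regularity, and $\Int(D_\ppp)$ is a $t$-linked extension of $D_\ppp$ by part~(a); hence $\Int(D)_\ppp$ is a $t$-linked extension of $D_\ppp$ for every $\ppp$, i.e.\ $\Int(D)$ is a locally $t$-linked extension of $D$.

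The only genuine work is concentrated in part~(a): one must check that the fractional-ideal computation underlying \cite[Lemma 4.1]{cgh} survives the passage from one variable to an arbitrary, possibly infinite, index set, with inverses now taken in $K(\XX)$ rather than in $K$. Because everything involved is finitary --- polynomials have finitely many monomials, and elements of $I\,\Int(D^\XX)$ are finite sums $\sum_i a_i h_i$ --- no new difficulty actually arises, but this is the point that warrants care. Part~(b) is then essentially formal, the only mild subtlety being the local-global argument identifying $D'\Int(D)$ with $\Int(D')$ for flat overrings $D'$ of a polynomially L-regular domain.
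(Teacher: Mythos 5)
Your proposal is correct and follows essentially the same route as the paper: part (a) rests on the identity $(I\,\Int(D^\XX))^{-1}=\Int(D^\XX,I^{-1})$, which is exactly the many-variable adaptation of \cite[Lemma 4.1]{cgh} that the paper invokes, and part (b) reproves the fact $\Int(D')=\Int(D,D')=D'\Int(D)$ for flat overrings (cited in the paper from \cite[Lemma 3.5]{ell} and \cite[Corollary I.2.6]{cah}) before applying (a) and localizing to get local $t$-linkedness. The only difference is that you spell out the details of the cited results instead of quoting them, which is fine.
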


\begin{proof}
By \cite[Lemma 4.1(2)]{cgh}, $\Int(D)$ is a $v$-compatible extension of $D$.  The proof may be easily adapted to show that $\Int(D^\XX)$ is a $v$-compatible extension of $D$, and (a) follows.  If $D$ is polynomially L-regular and $D'$ is a flat overring of $D$, then $\Int(D') = \Int(D,D') = D'\Int(D)$ by \cite[Lemma 3.5]{ell}, and therefore $D'\Int(D) = \Int(D')$ is a $v$-compatible extension of $D'$ by (a).  It then follows that $\Int(D)$ is a locally $t$-linked extension of $D$.  Thus (b) also holds. 
\end{proof}

If $D$ is polynomially L-regular, then we do not know if $\Int(D^\XX)$ is necessarily locally $t$-linked over $D$ for any set $X$. However, this does hold if $\Int(D_\ppp^\XX) = \Int(D^\XX)_\ppp$ for all $X$.  By transfinite induction on $|X|$, the latter equality holds if $\Int(D)$ is polynomially L-regular whenever $D$ is.  Moreover, the equality holds if $D$ is {\it polynomially composite}, as defined at the end of Section 1.2, or if $D$ satisfies the hypotheses of Theorem \ref{locallyfinite}.

\section{Polynomial completeness and regularity conditions}

\subsection{Proof of main theorem}

In this section we prove Theorem \ref{introthm} of the introduction.

First, we note that the following theorem follows from \cite[Theorem 1.4 and Proposition 3.6]{ell}.

\begin{theorem}\label{mostgeneralthm}
Let $A$ be an extension of an infinite domain $D$ such that $A = \bigcap_{\ppp \in \SS} A_\ppp$, where $\SS$ is a set of prime ideals of $D$ such that $\ppp D_\ppp$ is principal and $\Int(D_\ppp) = \Int(D)_\ppp$ for every $\ppp \in \SS$ with finite residue field.  Then $\Int(D,A) = \bigcap_{\ppp \in \SS} (A\Int(D))_\ppp$, and the following conditions are equivalent.
\begin{enumerate}
\item $A$ is a polynomially complete extension of $D$.
\item $A_\ppp$ is a polynomially complete extension of $D_\ppp$ for every $\ppp \in \SS$ with finite residue field.
\item $A$ is unramified and has trivial residue field extensions at every $\ppp \in \SS$ with finite residue field.
\item $A$ is a weakly polynomially complete extension of $D$.
\item $A_\ppp$ is a weakly polynomially complete extension of $D_\ppp$ every $\ppp \in \SS$ with finite residue field.
\end{enumerate} 
\end{theorem}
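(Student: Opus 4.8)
The plan is to localize at the primes $\ppp \in \SS$ and reduce the assertion to local facts over the rings $D_\ppp$. The starting observation is that, since $A = \bigcap_{\ppp \in \SS} A_\ppp$, a polynomial over the quotient field of $A$ carries $D$ into $A$ exactly when it carries $D$ into every $A_\ppp$; hence $\Int(D,A) = \bigcap_{\ppp \in \SS}\Int(D,A_\ppp)$, and likewise $\Int(A) = \bigcap_{\ppp \in \SS}\Int(A_\ppp)$ (one inclusion is immediate from $A = \bigcap_\ppp A_\ppp$, the other from $\Int(A) \subseteq \Int(A_\ppp)$ by \cite[Proposition I.2.2]{cah}). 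So the displayed formula amounts to the local equality $\Int(D,A_\ppp) = (A\Int(D))_\ppp = A_\ppp\Int(D)_\ppp$ for each $\ppp \in \SS$. For $\ppp$ with infinite residue field I would argue directly: since $D$ is infinite, $D/\ppp$ is an infinite domain, so for each $n$ one can choose $d_0,\dots,d_n \in D$ pairwise incongruent modulo $\ppp$, whereupon the Vandermonde matrix $(d_i^{\,j})_{0 \le i,j \le n}$ is invertible over $D_\ppp$ and any polynomial of degree at most $n$ carrying $D$ into $A_\ppp$ has all of its coefficients in $A_\ppp$; thus $\Int(D,A_\ppp) = A_\ppp[X]$, and as such a $\ppp$ is a strong polynomial prime (see the discussion preceding Lemma \ref{strongpolynomialprime}) we also have $\Int(D)_\ppp = D_\ppp[X]$, so that $(A\Int(D))_\ppp = A_\ppp \cdot D_\ppp[X] = A_\ppp[X]$ as well. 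For $\ppp$ with finite residue field I would invoke \cite[Theorem 1.4]{ell}, whose hypotheses ($\ppp D_\ppp$ principal and $\Int(D_\ppp) = \Int(D)_\ppp$, so in particular $\Int(D_\ppp)$ has a regular basis over $D_\ppp$ by \cite[Exercise II.16]{cah}) are exactly what is assumed here, to obtain the local equality $\Int(D,A_\ppp) = A_\ppp\Int(D_\ppp)$, i.e.\ that $A_\ppp$ is a polynomially regular extension of $D_\ppp$. This is the one genuinely substantial point, and it uses the principality of $\ppp D_\ppp$ in an essential way, resting on the structure of integer-valued polynomials over a discrete valuation ring (and on the polynomial regularity of arbitrary extensions of a Dedekind domain).

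Granting the intersection formula, the equivalences are now nearly formal. The implication (1)$\Rightarrow$(4) is immediate. Conversely, if $\Int(D)\subseteq\Int(A)$ then $A\Int(D)\subseteq\Int(A)$, hence $(A\Int(D))_\ppp\subseteq\Int(A)_\ppp\subseteq\Int(A_\ppp)$ for every $\ppp\in\SS$, and therefore $\Int(D,A) = \bigcap_\ppp(A\Int(D))_\ppp \subseteq \bigcap_\ppp\Int(A_\ppp) = \Int(A)$; since the reverse inclusion holds trivially, (4)$\Rightarrow$(1). For (4)$\Leftrightarrow$(5): localizing $\Int(D)\subseteq\Int(A)$ and using $\Int(D)_\ppp = \Int(D_\ppp)$ at the finite-residue-field primes gives the local inclusions $\Int(D_\ppp)\subseteq\Int(A_\ppp)$; conversely, given those, for $f\in\Int(D)$ and $a\in A$ one has $f(a)\in A_\ppp$ for every $\ppp\in\SS$ --- at a finite-residue-field prime because $f\in\Int(D)_\ppp = \Int(D_\ppp)\subseteq\Int(A_\ppp)$, and otherwise because $f\in\Int(D)_\ppp = D_\ppp[X]$ --- so $f(a)\in\bigcap_\ppp A_\ppp = A$ and $f\in\Int(A)$. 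The equivalence (5)$\Leftrightarrow$(2) is the general fact that a polynomially regular extension is polynomially complete if and only if it is weakly polynomially complete, applied to the polynomially regular extension $A_\ppp$ of $D_\ppp$ at each finite-residue-field $\ppp\in\SS$. Finally, (2)$\Leftrightarrow$(3) is the local ramification criterion of \cite[Proposition 3.6]{ell}: over $D_\ppp$, with maximal ideal $\pi D_\ppp$ and residue field $\FF_q$, the polynomial $(X^q-X)/\pi$ lies in $\Int(D_\ppp)$, and an extension absorbs all of $\Int(D_\ppp)$ precisely when $\pi$ generates every maximal ideal of the extension lying over $\pi D_\ppp$ and the residue field extensions there are trivial.

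The main obstacle is the local equality $\Int(D,A_\ppp) = A_\ppp\Int(D_\ppp)$ at primes of finite residue field --- equivalently \cite[Theorem 1.4]{ell} --- which carries the real content; once it is established, the global formula and all five equivalences follow formally from the decomposition $A = \bigcap_{\ppp\in\SS}A_\ppp$ and the elementary theory of integer-valued polynomials over valuation rings with principal maximal ideal, with the ramification criterion \cite[Proposition 3.6]{ell} the one further (classical) ingredient needed for condition (3).
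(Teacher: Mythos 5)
Your proposal is correct and is essentially the paper's own argument: the paper proves this theorem simply by noting that it follows from \cite[Theorem 1.4 and Proposition 3.6]{ell}, and your write-up rests on exactly those two results, adding only the routine localization and gluing details (the decomposition $\Int(D,A)=\bigcap_{\ppp\in\SS}\Int(D,A_\ppp)$, the Vandermonde step at primes with infinite residue field, and the formal derivation of the five equivalences). The only point to watch is that the local input you need at a finite-residue-field prime is $\Int(D,A_\ppp)=A_\ppp\Int(D)_\ppp$, which is strictly more than ``$A_\ppp$ is polynomially regular over $D_\ppp$'' (it also requires $\Int(D,A_\ppp)=\Int(D_\ppp,A_\ppp)$, and this is precisely where the hypothesis $\Int(D)_\ppp=\Int(D_\ppp)$ does real work); since that equality is exactly what the cited \cite[Theorem 1.4]{ell} supplies, your gloss is harmless rather than a gap.
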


Applying Theorem \ref{mostgeneralthm} to the set $\SS = \TM(D)$ of $t$-maximal ideals of $D$, we obtain the following.

\begin{corollary}\label{pvmdext}
Let $D$ be a domain such that every $t$-maximal ideal $\ppp$ of $D$ with finite residue field is locally principal and satisfies $\Int(D_\ppp) = \Int(D)_\ppp$.  Then for any $t$-linked extension $A$ of $D$, the following conditions are equivalent.
\begin{enumerate}
\item $A$ is a polynomially complete extension of $D$.
\item $A_\ppp$ is a polynomially complete extension of $D_\ppp$ for every $t$-maximal ideal $\ppp$ of $D$ with finite residue field.
\item $A$ is unramified and has trivial residue field extensions at every $t$-maximal ideal of $D$ with finite residue field.
\item $A$ is a weakly polynomially complete extension of $D$.
\item $A_\ppp$ is a weakly polynomially complete extension of $D_\ppp$ for every $t$-maximal ideal $\ppp$ of $D$ with finite residue field.
\end{enumerate} 
\end{corollary}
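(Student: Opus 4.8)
The plan is to derive Corollary \ref{pvmdext} as a direct specialization of Theorem \ref{mostgeneralthm} by taking $\SS = \TM(D)$, the set of $t$-maximal ideals of $D$, and by identifying the extensions $A$ for which the hypotheses of that theorem are met. So the first step is to check that the hypotheses line up: Theorem \ref{mostgeneralthm} requires $A = \bigcap_{\ppp \in \SS} A_\ppp$ together with the condition that $\ppp D_\ppp$ is principal and $\Int(D_\ppp) = \Int(D)_\ppp$ for every $\ppp \in \SS$ with finite residue field. The latter two conditions are exactly the standing hypothesis of Corollary \ref{pvmdext} once we take $\SS = \TM(D)$, so nothing is needed there. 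The first condition, $A = \bigcap_{\ppp \in \TM(D)} A_\ppp$, is, by Proposition \ref{tlinkedextensions}(4), precisely the statement that $A$ is a $t$-linked extension of $D$. Thus the hypothesis ``$A$ a $t$-linked extension of $D$'' in Corollary \ref{pvmdext} translates into exactly the hypothesis needed to invoke Theorem \ref{mostgeneralthm} with $\SS = \TM(D)$.

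With the hypotheses matched, the second step is simply to transcribe the five equivalent conditions of Theorem \ref{mostgeneralthm} with $\SS = \TM(D)$ substituted in. Conditions (1) and (4) — $A$ polynomially complete, resp.\ weakly polynomially complete, over $D$ — are phrased without reference to $\SS$, so they carry over verbatim. Conditions (2), (3), and (5) each contain the phrase ``for every $\ppp \in \SS$ with finite residue field,'' which becomes ``for every $t$-maximal ideal $\ppp$ of $D$ with finite residue field''; this is exactly the wording in the statement of Corollary \ref{pvmdext}. Hence the equivalence of (1)–(5) in the corollary is literally the equivalence of (1)–(5) in Theorem \ref{mostgeneralthm} read off for this particular $\SS$.

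There is essentially no obstacle here — the corollary is a formal consequence — but the one point that deserves an explicit sentence is the translation between ``$t$-linked'' and the intersection representation $A = \bigcap_{\ppp \in \TM(D)} A_\ppp$, which is where Proposition \ref{tlinkedextensions} is used; without that identification the reader might not see that the hypothesis of Theorem \ref{mostgeneralthm} is available. The proof I would write is therefore two sentences: first, note that by Proposition \ref{tlinkedextensions} a $t$-linked extension $A$ of $D$ satisfies $A = \bigcap_{\ppp \in \TM(D)} A_\ppp$, so that together with the standing hypothesis on $t$-maximal ideals with finite residue field the hypotheses of Theorem \ref{mostgeneralthm} hold with $\SS = \TM(D)$; second, the asserted equivalence of (1)–(5) is then immediate from that theorem.

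\begin{proof}
By Proposition \ref{tlinkedextensions}, a $t$-linked extension $A$ of $D$ satisfies $A = \bigcap_{\ppp \in \TM(D)} A_\ppp$.  Thus, taking $\SS = \TM(D)$, the hypotheses of Theorem \ref{mostgeneralthm} are satisfied, and the stated equivalences follow at once.
\end{proof}
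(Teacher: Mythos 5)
Your proposal is correct and is exactly the paper's argument: the corollary is obtained by applying Theorem \ref{mostgeneralthm} with $\SS = \TM(D)$, the identification of ``$t$-linked'' with $A = \bigcap_{\ppp \in \TM(D)} A_\ppp$ coming from Proposition \ref{tlinkedextensions}(4). The only hypothesis of Theorem \ref{mostgeneralthm} you did not mention explicitly is that $D$ be infinite, but this is automatic under the paper's standing convention that $D \neq K$, since a finite integral domain is a field.
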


A domain $D$ is said to be {\it absolutely polynomially regular} if every extension of $D$ is polynomially regular.  For example, every Dedekind domain is absolutely polynomially regular, while the unique factorization domain $\ZZ[T]$ is not, since the extension $\ZZ[T/2]$ of $\ZZ[T]$ is not polynomially regular by \cite[Example 7.3]{ell}.  Let us say that a domain $D$ is {\it polynomially F-regular} if every flat extension of $D$ is polynomially regular.  For example, every Krull domain is polynomially F-regular, by \cite[Theorem 1.3]{ell}.  In this section we prove several generalizations of this fact, namely, Theorems \ref{polyfreg}, \ref{wpolyfreg}, and \ref{tpolyreg}.

If $D$ is polynomially F-regular, then $D$ is polynomially L-regular, that is, one has $\Int(S^{-1}D) = S^{-1}\Int(D)$ for every multiplicative subset $S$ of $D$.
In fact, as remarked at the end of Section 3 of \cite{ell}, one has the following.

\begin{lemma}\label{easyprop}
Let $D$ be any integral domain.  The domain $D_\ppp$ is absolutely polynomially regular for every prime ideal $\ppp$ of $D$ with infinite residue field, and the following conditions are equivalent.
\begin{enumerate}
\item $D$ is absolutely polynomially regular (resp., polynomially F-regular).
\item $D$ is polynomially L-regular and $D_\ppp$ is absolutely polynomially regular (resp., polynomially F-regular) for every prime ideal $\ppp$ of $D$.
\item $D$ is polynomially L-regular and $D_\ppp$ is absolutely polynomially regular (resp., polynomially F-regular) for every maximal ideal $\ppp$ of $D$ with finite residue field.
\end{enumerate}
\end{lemma}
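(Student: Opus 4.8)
The plan is to treat the ``absolutely polynomially regular'' and ``polynomially F-regular'' alternatives in parallel and to run the cycle $(1)\Rightarrow(2)\Rightarrow(3)\Rightarrow(1)$, after first settling the assertion about primes with infinite residue field. For that assertion, suppose $\kappa(\ppp)$ is infinite and let $A$ be an extension of $D_\ppp$. For any $N$ one can pick $d_0,\dots,d_N\in D_\ppp$ with pairwise distinct images in $\kappa(\ppp)$; then each $d_i-d_j$ with $i\ne j$ lies outside the maximal ideal $\ppp D_\ppp$, hence is a unit of $D_\ppp$ and therefore of $A$, so the corresponding Vandermonde determinant is a unit of $A$, and Lagrange interpolation on $d_0,\dots,d_N$ expresses the coefficients of any $f\in\Int(D_\ppp,A)$ of degree at most $N$ as $A$-linear combinations of the values $f(d_i)\in A$. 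Hence $\Int(D_\ppp,A)=A[X]=A\Int(D_\ppp)$ (using $\Int(D_\ppp)=D_\ppp[X]$, which is \cite[Corollary I.3.7]{cah}, or the case $A=D_\ppp$ above), so $D_\ppp$ is absolutely polynomially regular, hence also polynomially F-regular.

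For $(1)\Rightarrow(2)$: each localization of $D$ is a flat overring, hence by (1) a polynomially regular extension of $D$, so by \cite[Corollary I.2.6]{cah} one has $\Int(S^{-1}D)=\Int(D,S^{-1}D)=S^{-1}\Int(D)$ for all $S$, i.e.\ $D$ is polynomially L-regular. Now fix a prime $\ppp$ and an extension (resp.\ flat extension) $A$ of $D_\ppp$. Through $D\subseteq D_\ppp\subseteq A$, the ring $A$ is an extension (resp., being a composite of flat maps, a flat extension) of $D$, so $\Int(D,A)=A\Int(D)$ by (1). From $\Int(D)\subseteq\Int(D,D_\ppp)=\Int(D_\ppp)$ and, using L-regularity, $\Int(D_\ppp)=\Int(D)_\ppp\subseteq A\Int(D)$, one gets $A\Int(D)=A\Int(D_\ppp)$; combining this with $A\Int(D_\ppp)\subseteq\Int(D_\ppp,A)\subseteq\Int(D,A)$ shows all four modules are equal, so $\Int(D_\ppp,A)=A\Int(D_\ppp)$. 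The implication $(2)\Rightarrow(3)$ is immediate, (3) being a special case of (2).

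For $(3)\Rightarrow(1)$: by the first paragraph together with (3), $D_\mm$ is absolutely polynomially regular (resp.\ polynomially F-regular) for \emph{every} maximal ideal $\mm$ of $D$, and $D$ is polynomially L-regular. Let $A$ be an extension (resp.\ flat extension) of $D$. Since $A\Int(D)\subseteq\Int(D,A)$ are $A$-submodules of the polynomial ring over the quotient field of $A$, it suffices to show $\bigl(A\Int(D)\bigr)_\nn=\Int(D,A)_\nn$ for each maximal ideal $\nn$ of $A$. Fix one, put $\ppp=\nn\cap D$, and choose a maximal ideal $\mm\supseteq\ppp$; then $D_\mm\subseteq D_\ppp\subseteq A_\nn$, and $A_\nn$ is an extension (resp.\ flat extension---$A_\nn$ is a localization of $A$, hence flat over $D$, and a $D_\mm$-module flat over $D$ is flat over $D_\mm$) of $D_\mm$, so $\Int(D_\mm,A_\nn)=A_\nn\Int(D_\mm)$. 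By \cite[Corollary I.2.6]{cah}, applied to the multiplicative set $D\setminus\mm$ (which maps to units of $A_\nn$), $\Int(D,A_\nn)=\Int(D_\mm,A_\nn)$, and by L-regularity this equals $A_\nn\Int(D_\mm)=A_\nn\cdot\Int(D)_\mm=A_\nn\Int(D)=\bigl(A\Int(D)\bigr)_\nn$. Since $\bigl(A\Int(D)\bigr)_\nn\subseteq\Int(D,A)_\nn\subseteq\Int(D,A_\nn)$, the three modules coincide; intersecting over all $\nn$ then yields $\Int(D,A)=A\Int(D)$.

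The step I expect to be the main obstacle is $(3)\Rightarrow(1)$, and within it the descent from the local rings $D_\mm$ to $D$: since $\Int$ does not commute with localization, one must check carefully that localizing the fractional ideal $\Int(D,A)$ at a maximal ideal $\nn$ of $A$ loses no information. The decisive input is the identity $\Int(D,B)=\Int(S^{-1}D,B)$, valid whenever $S^{-1}D\subseteq B$ (its special case $B=S^{-1}D$ being the form of \cite[Corollary I.2.6]{cah} already invoked in this paper), used in tandem with the polynomial L-regularity hypothesis. These are precisely the observations recorded at the end of Section~3 of \cite{ell} that the statement refers to.
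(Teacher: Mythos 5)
The paper itself offers no proof of this lemma (it is quoted from the remark at the end of Section 3 of \cite{ell}), so there is no in-text argument to match line by line. Judged on its own, your proof is correct in outline and uses the same mechanism the paper employs for its analogues (Lemma \ref{weasyprop}, Theorems \ref{polyfreg} and \ref{tpolyreg}): verify the equality of the $A$-submodules $A\Int(D)\subseteq\Int(D,A)$ locally, using polynomial L-regularity to pass between $\Int(D)_\mm$ and $\Int(D_\mm)$. The only structural difference is that you localize at the maximal ideals of $A$, whereas the paper and \cite{ell} intersect over primes of $D$ via $\Int(D,A)=\bigcap_\ppp\Int(D_\ppp,A_\ppp)$; these are essentially equivalent. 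Your first paragraph (interpolation at points with distinct residues when $\kappa(\ppp)$ is infinite) and the implications $(1)\Rightarrow(2)\Rightarrow(3)$ are fine.

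One step rests on a citation that does not literally cover it: in $(3)\Rightarrow(1)$ you need $\Int(D,A_\nn)=\Int(D_\mm,A_\nn)$, i.e.\ that $D$ is polynomially dense in $D_\mm$ relative to an \emph{arbitrary} extension $B\supseteq D_\mm$, whereas \cite[Corollary I.2.6]{cah}, as used in this paper, is the case $B=S^{-1}D$. The general statement is true, and you acknowledge you are using it, but it should be either attributed correctly (it is the localization fact underlying the formula $\Int(D,A)=\bigcap_{\ppp\in\SS}(A\Int(D))_\ppp$ of Theorem \ref{mostgeneralthm}, taken from \cite{ell}) or proved; a short proof: for $s\in D$ invertible in $B$ and $\deg f\le n$, let $P(Y)=\sum_{j=0}^{n-1}r_jY^j\in\ZZ[s,s^{-1}][Y]$ be determined by $YP(Y)-1=(-1)^{n+1}s^{-n(n+1)/2}\prod_{i=1}^{n}(Y-s^{i})$, so that $P(s^{i})=s^{-i}$ for $1\le i\le n$; checking on the monomials $X^i$ ($0\le i\le n$) gives the identity $f(a/s)=\bigl(1-\sum_j r_j\bigr)f(0)+\sum_{j=0}^{n-1}r_jf(as^{j})$, whence $f(D)\subseteq B$ forces $f(a/s)\in B$. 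This is a repair of attribution rather than of substance; with it, your argument goes through in both the absolutely polynomially regular and the polynomially F-regular variants.
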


The following result shows that polynomial F-regularity is a $t$-local condition. 

\begin{theorem}\label{polyfreg}
Let $D$ be an integral domain.  Then $D$ is polynomially F-regular if and only if $D$ is polynomially L-regular and $D_\ppp$ is polynomially F-regular for every $t$-localizing $t$-maximal ideal $\ppp$ of $D$ with finite residue field.
\end{theorem}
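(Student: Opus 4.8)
The plan is to prove both directions by reducing to local statements via the criteria already collected in the excerpt. For the forward direction, suppose $D$ is polynomially F-regular. Then $D$ is polynomially L-regular by the remark preceding Lemma \ref{easyprop} (or by Lemma \ref{easyprop} itself, condition (1) $\Rightarrow$ (2)). It remains to show $D_\ppp$ is polynomially F-regular for every $t$-localizing $t$-maximal ideal $\ppp$ with finite residue field; but this is immediate from Lemma \ref{easyprop}, since condition (1) for $D$ gives that $D_\qqq$ is polynomially F-regular for \emph{every} prime $\qqq$, in particular for $\qqq = \ppp$.

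The substance is the converse. Assume $D$ is polynomially L-regular and that $D_\ppp$ is polynomially F-regular for every $t$-localizing $t$-maximal ideal $\ppp$ of $D$ with finite residue field. By Lemma \ref{easyprop}, condition (3) $\Rightarrow$ (1), it suffices to show that $D_\mm$ is polynomially F-regular for every maximal ideal $\mm$ of $D$ with finite residue field. Fix such an $\mm$. Since $D_\mm$ is polynomially L-regular (this should follow because localization of $D$ at $\mm$ inherits polynomial L-regularity — one has $(D_\mm)_{\ppp D_\mm} = D_\ppp$ and $\Int((D_\mm)_{\ppp D_\mm}) = \Int(D_\ppp) = \Int(D)_\ppp = \Int(D_\mm)_{\ppp D_\mm}$ using the given hypothesis, so condition (3) of Proposition \ref{localization} holds for $D_\mm$), we may apply Lemma \ref{easyprop} to $D_\mm$: it is enough to check that $(D_\mm)_\qqq$ is polynomially F-regular for every prime $\qqq$ of $D_\mm$, i.e. for every prime $\qqq \subset \mm$ of $D$. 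So the task becomes: for every prime $\qqq \subseteq \mm$ with $\mm$ (hence $\qqq$) having finite residue field, show $D_\qqq$ is polynomially F-regular.

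Now distinguish cases on $\qqq$. If $\qqq$ is \emph{not} $t$-localizing, I claim $D_\qqq$ is already a polynomial ring in the relevant sense — more precisely, the key point is that one wants $D_\qqq$ to be polynomially F-regular, and here I would invoke that a non-$t$-localizing prime is not in $\TS(D)$ (by the chain $\Ass(K/D)\subset\wAss(K/D)\subset\sKr(K/D)\subset\TS(D)$ and the characterization of $t$-localizing via $\sKr(K/D)$), so $D_\qqq$ sits below the relevant $t$-structure; the cleanest route is to enlarge $\qqq$ to a $t$-maximal ideal when possible, or else handle it by the fact that if $\qqq$ is not $t$-localizing then for some finitely generated $I\subset\qqq$ no conductor ideal is squeezed between $I$ and $\qqq$, which forces $\qqq D_\qqq$ and hence $D_\qqq$ to behave well with respect to $\Int$. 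If $\qqq$ \emph{is} $t$-localizing, then $\qqq$ is a prime $t$-ideal of $D$, so it is contained in some $t$-maximal ideal $\ppp$; then $\ppp$ is $t$-localizing only if $\ppp D_\ppp$ is a $t$-ideal, which need not hold, so instead I would localize at $\qqq$ directly and use that $\qqq D_\qqq$ is $t$-maximal and $t$-localizing in $D_\qqq$, reducing to the hypothesis applied inside $D_\qqq$; combined with polynomial L-regularity of $D_\qqq$ and Lemma \ref{easyprop} this closes the induction.

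The main obstacle I anticipate is precisely the bookkeeping in the previous paragraph: passing between $t$-maximal, $t$-localizing, and $t$-prime, and ensuring that localization preserves the hypotheses so that Lemma \ref{easyprop} can be applied at each stage. In particular one must be careful that a $t$-localizing $t$-maximal ideal of $D_\qqq$ pulls back to (or is dominated by) a $t$-localizing $t$-maximal ideal of $D$ with finite residue field, so that the hypothesis on $D$ actually feeds the induction; this should follow from Lemma \ref{strongpolynomialprime} and the compatibility of $\Int$ with the double localization $(D_\qqq)_{\qqq' D_\qqq} = D_{\qqq'}$, but it is the step that requires the most care. The handling of non-$t$-localizing primes $\qqq$ — showing directly that $D_\qqq$ is polynomially F-regular there — is the other point where a small separate argument (rather than a citation) will be needed.
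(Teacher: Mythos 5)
Your forward direction is fine and agrees with the paper. The problem is the converse, and there the proposal does not close. Your first reduction (via Lemma \ref{easyprop}, (3)$\Rightarrow$(1), to showing $D_\mm$ polynomially F-regular for every maximal ideal $\mm$ with finite residue field, together with your verification that $D_\mm$ is polynomially L-regular) is legitimate, but the next step is circular: $D_\mm$ is local, so when you apply Lemma \ref{easyprop} to $D_\mm$ its only maximal ideal is $\mm D_\mm$, and conditions (2)/(3) for $D_\mm$ include the very statement ``$D_\mm$ is polynomially F-regular'' that you are trying to prove. Note also that a prime with finite residue field is automatically maximal (a finite domain is a field), so all primes $\qqq \subsetneq \mm$ have infinite residue field and are handled by Lemma \ref{easyprop}; the entire difficulty is concentrated at $\mm$ itself, and your case analysis on lower primes $\qqq$ does not address it. The genuine gap is the case of a maximal ideal $\mm$ with finite residue field that is \emph{not} $t$-localizing: if $\mm$ is $t$-localizing then, being maximal, it is $t$-maximal (every proper $t$-ideal lies in a $t$-maximal ideal), so the hypothesis covers it; but if $\mm$ is not $t$-localizing the hypothesis says nothing about $D_\mm$, and all that Lemma \ref{implications} yields is $\Int(D_\mm) = D_\mm[X]$. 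Your phrase that such a prime ``behaves well with respect to $\Int$'' is not an argument: no quoted result converts $\Int(D_\mm) = D_\mm[X]$ into polynomial F-regularity of $D_\mm$, and proving that implication is essentially as hard as the theorem itself. So the localize-at-maximal-ideals strategy cannot see why the theorem is a $t$-local statement.

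The paper's proof avoids this entirely by arguing globally on a flat extension $A$ of $D$: flatness makes $A$ locally $t$-linked, so $A = \bigcap_{\ppp \in \sKr(K/D)} A_\ppp$ by Proposition \ref{locallytlinked}, whence
$$\Int(D,A) = \bigcap_{\ppp \in \sKr(K/D)} \Int(D_\ppp, A_\ppp) = \bigcap_{\ppp \in \sKr(K/D)} (A\Int(D))_\ppp,$$
using polynomial L-regularity ($\Int(D_\ppp) = \Int(D)_\ppp$) and the fact that each $\ppp \in \sKr(K/D)$ with finite residue field is automatically a $t$-localizing $t$-maximal ideal (again because finite residue field forces $\ppp$ maximal), so $D_\ppp$ is polynomially F-regular by hypothesis, while primes with infinite residue field give absolutely polynomially regular $D_\ppp$ by Lemma \ref{easyprop}. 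Then flatness gives $A\Int(D) = A \otimes_D \Int(D)$, which by Lemma \ref{inttlinked}(b) and Proposition \ref{tlinkedlemma} is a locally $t$-linked extension of $D$, so Proposition \ref{locallytlinked} collapses the intersection to $A\Int(D)$, i.e.\ $\Int(D,A) = A\Int(D)$. The maximal ideals that are not $t$-localizing simply never enter the intersection, which is exactly the mechanism your reduction is missing.
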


\begin{proof}
Necessity of the condition follows from Lemma \ref{easyprop}.  Suppose that $D$ is polynomially L-regular and $D_\ppp$ is polynomially F-regular for every $t$-localizing $t$-maximal ideal $\ppp$ of $D$ with finite residue field, and let $A$ be a flat extension of $D$.  Then $A_\ppp$ is a flat extension of $D_\ppp$ for any $\ppp$, and by Proposition \ref{locallytlinked} one has $A = \bigcap_{\ppp \in \sKr(K/D)} A_\ppp$.  One therefore has $$\Int(D,A) = \bigcap_{\ppp \in \sKr(K/D)} \Int(D_\ppp,A_\ppp) = \bigcap_{\ppp \in \sKr(K/D)} (A\Int(D))_\ppp.$$  Since $A$ is flat as a $D$-module, we have $A \Int(D) = A \otimes_D \Int(D)$, by \cite[Proposition 3.9]{ell}.  Therefore, since $\Int(D)$ is a locally $t$-linked extension of $D$ by Lemma \ref{inttlinked}, it follows from Proposition \ref{tlinkedlemma} that $A \Int(D)$ is a locally $t$-linked extension of $D$.  Therefore by Proposition \ref{locallytlinked} we have $\Int(D,A) =  \bigcap_{\ppp \in \sKr(K/D)} (A \Int(D))_\ppp = A\Int(D)$.  Thus $A$ is a polynomially regular extension of $D$.  This proves that $D$ is polynomially F-regular.  
\end{proof}

Note that the statement obtained from Theorem \ref{polyfreg} by replacing polynomial F-regularity with absolute polynomial regularity does not hold.  For example, the domain $\ZZ[T]$ is a Krull domain, hence polynomially L-regular, and every localization of $\ZZ[T]$ at a $t$-maximal ideal is a DVR and therefore absolutely polynomially regular.  Nevertheless $\ZZ[T]$ is not absolutely polynomially regular.  Thus, polynomially F-regularity is a $t$-local condition, while absolute polynomial regularity is not.

Next, let us say that a domain $D$ is {\it polynomially $t$-regular} if every $t$-linked extension of $D$ is polynomially regular.  We also say that $D$ is {\it polynomially L-$t$-regular} if every locally $t$-linked extension of $D$ is polynomially regular.  Since localizations are flat, flat extensions are locally $t$-linked, and locally $t$-linked extensions are $t$-linked, we clearly have the following implications:

\begin{eqnarray*}
\mbox{absolutely polynomially regular } & \Rightarrow & \mbox{ polynomially }t\mbox{-regular } \\   & \Rightarrow & \mbox{ polynomially L-}t\mbox{-regular } \\ & \Rightarrow & \mbox{ polynomially F-regular } \\ & \Rightarrow & \mbox{ polynomially L-regular.}
\end{eqnarray*}


Like flatness, local $t$-linkedness is a local property.  We can thus derive the following analogue of Lemma \ref{easyprop}.

\begin{lemma}\label{weasyprop}
For any integral domain $D$, the following conditions are equivalent.
\begin{enumerate}
\item $D$ is polynomially L-$t$-regular.
\item $D$ is polynomially L-regular and $D_\ppp$ is polynomially L-$t$-regular for every prime ideal $\ppp$ of $D$.
\item $D$ is polynomially L-regular and $D_\ppp$ is polynomially L-$t$-regular for every maximal ideal $\ppp$ of $D$ with finite residue field.
\end{enumerate}
\end{lemma}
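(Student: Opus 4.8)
The plan is to follow the proof of Lemma~\ref{easyprop}, replacing ``flat'' by ``locally $t$-linked'' throughout; what makes this legitimate is that, like flatness, local $t$-linkedness is a local property that is stable under composition. So I would first record three facts about locally $t$-linked extensions. First, by definition $A$ is locally $t$-linked over $D$ precisely when $A_\ppp$ is a $t$-linked extension of $D_\ppp$ for every prime $\ppp$ of $D$; since $(A_\ppp)_\qqq=A_\qqq$ and $(D_\ppp)_\qqq=D_\qqq$ for $\qqq\subseteq\ppp$, it suffices to test maximal ideals, and, more generally, every localization of a locally $t$-linked extension of $D$ is a locally $t$-linked extension of the corresponding localization of $D$. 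Second, every localization of $D$, and more generally every flat extension of $D$, is locally $t$-linked. Third, if $A$ is a locally $t$-linked extension of $D$ and $B$ is a locally $t$-linked extension of $A$, then $B$ is a locally $t$-linked extension of $D$: for a prime $\ppp$ of $D$ the ring $A_\ppp=(D\setminus\ppp)^{-1}A$ is a localization of the locally $t$-linked extension $A$ of $D$, hence locally $t$-linked and in particular $t$-linked over $D_\ppp$ by Proposition~\ref{locallytlinked}; likewise $B_\ppp=(D\setminus\ppp)^{-1}B$ is $t$-linked over $A_\ppp$; and $B_\ppp$ is then $t$-linked over $D_\ppp$ by Proposition~\ref{tlinkedlemma}(a).

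For $(1)\Rightarrow(2)$: since localizations are locally $t$-linked, a polynomially L-$t$-regular domain is polynomially F-regular, and hence polynomially L-regular. Fix a prime $\ppp$ of $D$ and a locally $t$-linked extension $B$ of $D_\ppp$. By the third fact above, $B$ is a locally $t$-linked extension of $D$, so $\Int(D,B)=B\Int(D)$. Since $B$ is a $D_\ppp$-algebra and $\Int(D)_\ppp=\Int(D_\ppp)$ by polynomial L-regularity, $B\Int(D)=B\Int(D_\ppp)$; combining this with the trivial inclusions $B\Int(D_\ppp)\subseteq\Int(D_\ppp,B)\subseteq\Int(D,B)$ yields $\Int(D_\ppp,B)=B\Int(D_\ppp)$, so $B$ is polynomially regular over $D_\ppp$. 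Thus $D_\ppp$ is polynomially L-$t$-regular, and $(1)\Rightarrow(2)$ holds. The implication $(2)\Rightarrow(3)$ is trivial.

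For $(3)\Rightarrow(1)$: by Lemma~\ref{easyprop}, $D_\ppp$ is absolutely polynomially regular, hence polynomially L-$t$-regular, for every prime $\ppp$ with infinite residue field; and every prime with finite residue field is maximal, so hypothesis~(3) gives that $D_\ppp$ is polynomially L-$t$-regular for \emph{every} prime $\ppp$ of $D$. Now let $A$ be a locally $t$-linked extension of $D$. By Proposition~\ref{locallytlinked}, $A=\bigcap_{\ppp\in\sKr(K/D)}A_\ppp$, where each $A_\ppp=A\otimes_D D_\ppp=(D\setminus\ppp)^{-1}A$ is, by the first fact, a locally $t$-linked extension of the polynomially L-$t$-regular ring $D_\ppp$; hence $A_\ppp$ is polynomially regular over $D_\ppp$, so $\Int(D_\ppp,A_\ppp)=A_\ppp\Int(D_\ppp)=A_\ppp\Int(D)_\ppp=(A\Int(D))_\ppp$, the last equality by polynomial L-regularity of $D$. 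Arguing now exactly as in the proof of Theorem~\ref{polyfreg} --- with ``locally $t$-linked'' in place of ``flat'' and the polynomial L-$t$-regularity of the local rings $D_\ppp$ in place of their polynomial F-regularity --- one obtains $\Int(D,A)=\bigcap_{\ppp\in\sKr(K/D)}\Int(D_\ppp,A_\ppp)=\bigcap_{\ppp\in\sKr(K/D)}(A\Int(D))_\ppp=A\Int(D)$, so $A$ is polynomially regular over $D$ and $D$ is polynomially L-$t$-regular.

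The step I expect to require the most care is the concluding chain of equalities in $(3)\Rightarrow(1)$ --- in particular the identity $\bigcap_{\ppp\in\sKr(K/D)}(A\Int(D))_\ppp=A\Int(D)$, which amounts to saying that $A\Int(D)$ is a locally $t$-linked extension of $D$. In the proof of Theorem~\ref{polyfreg} this was obtained from the flatness of $A$, via $A\Int(D)=A\otimes_D\Int(D)$, which is a flat, hence locally $t$-linked, extension of $\Int(D)$ and therefore a locally $t$-linked extension of $D$ by Lemma~\ref{inttlinked}(b) and Proposition~\ref{tlinkedlemma}(a). When $A$ is merely locally $t$-linked this identification of $A\Int(D)$ is unavailable, and one must instead establish directly that $A\Int(D)$ --- equivalently $\Int(D,A)$, which is an overring of $\Int(D)$ inside $K[X]$ --- is a locally $t$-linked extension of $D$; this, together with the attendant local compatibility $\Int(D,A)=\bigcap_{\ppp\in\sKr(K/D)}\Int(D_\ppp,A_\ppp)$, is the one point that genuinely goes beyond the argument already recorded for Theorem~\ref{polyfreg}. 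The rest is formal bookkeeping with the three structural facts above.
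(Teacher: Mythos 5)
There is a genuine gap, and you identified it yourself without closing it: in your $(3)\Rightarrow(1)$ argument the concluding equality $\bigcap_{\ppp\in\sKr(K/D)}(A\Int(D))_\ppp=A\Int(D)$ requires knowing that $A\Int(D)$ is a locally $t$-linked extension of $D$ (or at least equals the intersection of its localizations at the strong Krull primes). In Theorem \ref{polyfreg} this came from flatness of $A$, via $A\Int(D)=A\otimes_D\Int(D)$ together with Lemma \ref{inttlinked} and Proposition \ref{tlinkedlemma}; for a merely locally $t$-linked $A$ no such identification is available, and this is precisely the obstruction that forces the extra hypothesis ``$\Int(D)$ flat'' in Theorem \ref{wpolyfreg}. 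Saying that ``one must instead establish directly'' this local $t$-linkedness is not a proof, and there is no evident reason it should hold in the generality of the lemma; so as written the implication $(3)\Rightarrow(1)$ (equivalently $(2)\Rightarrow(1)$) is not established.

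The paper's own proof sidesteps the difficulty by not using $\sKr(K/D)$ at all in that direction: since hypothesis (2) (which follows from (3) exactly as you argue, because a prime with finite residue field is maximal and primes with infinite residue field have absolutely polynomially regular localizations) gives polynomial regularity of $A_\ppp$ over $D_\ppp$ for \emph{every} prime $\ppp$ of $D$, one can write
$$\Int(D,A)=\bigcap_{\ppp\in\Spec(D)}\Int(D_\ppp,A_\ppp)=\bigcap_{\ppp\in\Spec(D)}(A\Int(D))_\ppp=A\Int(D),$$
where the last equality is the elementary fact that a torsion-free $D$-module equals the intersection of its localizations at all primes (or all maximal ideals) --- no $t$-linkedness of $A\Int(D)$, and no appeal to Proposition \ref{locallytlinked}, is needed. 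Your $(1)\Rightarrow(2)$ and $(2)\Leftrightarrow(3)$ steps are fine and essentially match the paper (the paper invokes \cite[Lemma 3.5]{ell} where you argue the transfer to $D_\ppp$ directly), but the closing step of the converse should be rerouted through the full spectrum as above rather than through the strong Krull primes.
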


\begin{proof}
Suppose that $D$ is polynomially L-$t$-regular, and let $\ppp$ be a prime ideal of $D$.  Let $A$ be a locally $t$-linked extension of $D_\ppp$.  Then $A$ is a locally $t$-linked
extension of $D$, whence $A$ is a polynomially regular extension of $D$.  It follows, then, from \cite[Lemma 3.5]{ell}, that $A$ is a polynomially regular
extension of $D_\ppp$.  Thus $D_\ppp$ is polynomially L-$t$-regular.  Thus (1) implies (2), and clearly (2) and (3) are equivalent.  Lastly, suppose that (2) holds, and let $A$
be a locally $t$-linked extension of $D$.  Then for any prime $\ppp$ of $D$ the extension $A_\ppp$ of $D_\ppp$ is locally $t$-linked, whence $A_\ppp$ is a polynomially
regular extension of $D_\ppp$ for all $\ppp$.  Thus it follows that $\Int(D,A) = \bigcap_{\ppp \in \Spec(D)} \Int(D_\ppp,A_\ppp) = 
\bigcap_{\ppp \in \Spec(D)} (A\Int(D))_\ppp = A\Int(D)$, whence $A$ is a polynomially regular extension of $D$.  Thus (2) implies (1).
\end{proof}

An analogue of Theorem \ref{polyfreg} for polynomial L-$t$-regularity holds if $\Int(D)$ is assumed flat as a $D$-module.

\begin{theorem}\label{wpolyfreg}
Let $D$ be an integral domain such that $\Int(D)$ is flat as a $D$-module.  Then $D$ is polynomially L-$t$-regular if and only if $D$ is polynomially L-regular and $D_\ppp$ is polynomially L-$t$-regular for every $t$-localizing $t$-maximal ideal $\ppp$ of $D$ with finite residue field.
\end{theorem}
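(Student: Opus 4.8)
The plan is to obtain necessity at once from Lemma~\ref{weasyprop} and to prove sufficiency by running the argument of Theorem~\ref{polyfreg} again almost verbatim, with the flatness of $\Int(D)$ taking over the role that flatness of the extension played there. For necessity, if $D$ is polynomially L-$t$-regular then, by Lemma~\ref{weasyprop}, it is polynomially L-regular and $D_\ppp$ is polynomially L-$t$-regular for every prime $\ppp$ of $D$, hence in particular for every $t$-localizing $t$-maximal ideal of $D$ with finite residue field.

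For sufficiency, assume $\Int(D)$ is flat over $D$, that $D$ is polynomially L-regular, and that $D_\ppp$ is polynomially L-$t$-regular for every $t$-localizing $t$-maximal ideal $\ppp$ of $D$ with finite residue field; I want to show that an arbitrary locally $t$-linked extension $A$ of $D$ satisfies $\Int(D,A)=A\Int(D)$. The first step is to show that $A\Int(D)$ is itself a locally $t$-linked extension of $D$. For every prime $\ppp$ of $D$ one has $(A\Int(D))_\ppp=A_\ppp\Int(D)_\ppp=A_\ppp\Int(D_\ppp)$, the last equality because $D$ is polynomially L-regular; moreover $\Int(D_\ppp)=\Int(D)_\ppp$ is flat over $D_\ppp$, so $A_\ppp\Int(D_\ppp)=A_\ppp\otimes_{D_\ppp}\Int(D_\ppp)$, and since $A_\ppp$ is $t$-linked over $D_\ppp$, Proposition~\ref{tlinkedlemma}(b) shows that $(A\Int(D))_\ppp$ is a $t$-linked extension of $D_\ppp$. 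Thus $A\Int(D)$ is a locally $t$-linked extension of $D$, and Proposition~\ref{locallytlinked} gives
\[ A\Int(D)\;=\;\bigcap_{\ppp\in\sKr(K/D)}(A\Int(D))_\ppp\;=\;\bigcap_{\ppp\in\sKr(K/D)}A_\ppp\Int(D_\ppp). \]

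It remains to prove $\Int(D,A)\subseteq A\Int(D)$. As $A\subseteq A_\ppp$ we have $\Int(D,A)\subseteq\Int(D,A_\ppp)=\Int(D_\ppp,A_\ppp)$ for every $\ppp$, so in view of the above display it is enough to show that $\Int(D_\ppp,A_\ppp)=A_\ppp\Int(D_\ppp)$ --- that is, that $A_\ppp$ is a polynomially regular extension of $D_\ppp$ --- for every $\ppp\in\sKr(K/D)$; granting this, $\Int(D,A)\subseteq\bigcap_{\ppp\in\sKr(K/D)}A_\ppp\Int(D_\ppp)=A\Int(D)$. Fix $\ppp\in\sKr(K/D)$. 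If $\ppp$ has infinite residue field, then $D_\ppp$ is absolutely polynomially regular by Lemma~\ref{easyprop}, so $A_\ppp$ is polynomially regular over $D_\ppp$. If $\ppp$ has finite residue field, then $D/\ppp$ is an integral domain whose quotient field is finite, so $D/\ppp$ is a finite field and $\ppp$ is a maximal ideal of $D$; and since $\ppp$ lies in $\sKr(K/D)$ it is $t$-localizing, hence a proper $t$-ideal of $D$, hence $t$-maximal. Therefore $\ppp$ is a $t$-localizing $t$-maximal ideal of $D$ with finite residue field, so $D_\ppp$ is polynomially L-$t$-regular by hypothesis; as $A_\ppp$ is a locally $t$-linked extension of $D_\ppp$, it is polynomially regular over $D_\ppp$. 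In all cases $A_\ppp$ is polynomially regular over $D_\ppp$, and the proof is complete.

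The crux is the first step of the sufficiency argument: the hypothesis that $\Int(D)$ is flat is precisely what is needed so that $A\Int(D)$ is, locally, the tensor product $A\otimes_D\Int(D)$ and inherits local $t$-linkedness from $A$ via Proposition~\ref{tlinkedlemma}(b), which is what lets Proposition~\ref{locallytlinked} express $A\Int(D)$ as an intersection over $\sKr(K/D)$. Once that is in place, the seeming gap between the $t$-localizing primes in $\sKr(K/D)$ and the $t$-localizing $t$-maximal ideals appearing in the hypothesis simply disappears, since any prime with finite residue field is automatically maximal and hence $t$-maximal as soon as it is $t$-localizing.
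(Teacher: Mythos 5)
Your proof is correct and follows essentially the same route as the paper's: necessity from Lemma \ref{weasyprop}, and sufficiency by writing $A$ and $A\Int(D)$ as intersections over $\sKr(K/D)$ via Proposition \ref{locallytlinked}, using flatness of $\Int(D)$ together with Proposition \ref{tlinkedlemma} to see that $A\Int(D)\cong A\otimes_D\Int(D)$ is locally $t$-linked, and the hypotheses to get polynomial regularity of $A_\ppp$ over $D_\ppp$ at each relevant prime. You merely make explicit two points the paper leaves implicit --- the case split on finite versus infinite residue field, and the observation that a prime in $\sKr(K/D)$ with finite residue field is automatically a $t$-localizing $t$-maximal ideal --- which is a welcome clarification but not a different argument.
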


\begin{proof}
Necessity of the condition follows from Lemma \ref{weasyprop}.  Suppose that $D$ is polynomially L-regular and $D_\ppp$ is polynomially L-$t$-regular for every $t$-localizing $t$-maximal ideal $\ppp$ of $D$ with finite residue field, and let $A$ be a locally $t$-linked extension of $D$.  Then $A_\ppp$ is locally $t$-linked extension of $D_\ppp$ for any $\ppp$, and by Proposition \ref{locallytlinked} one has $A = \bigcap_{\ppp \in \sKr(K/D)} A_\ppp$.  One therefore has $$\Int(D,A) = \bigcap_{\ppp \in \sKr(K/D)} \Int(D_\ppp,A_\ppp) = \bigcap_{\ppp \in \sKr(K/D)} (A\Int(D))_\ppp.$$  Since by hypothesis $\Int(D)$ is flat as a $D$-module, we have $A \Int(D) = A \otimes_D \Int(D)$ by \cite[Proposition 3.9]{ell}.  Therefore, since $A$ is a locally $t$-linked extension of $D$, it follows from Proposition \ref{tlinkedlemma} that $A \Int(D)$ is a locally $t$-linked extension of $D$.  Therefore we have $\Int(D,A) =  \bigcap_{\ppp \in \sKr(K/D)} (A \Int(D))_\ppp = A\Int(D)$.  Thus $A$ is a polynomially regular extension of $D$.  This proves that $D$ is polynomially L-$t$-regular.  
\end{proof}

Unlike flatness and local $t$-linkedness, $t$-linkedness is not a local property.  Thus we may prove only a partial analogue of Lemma \ref{easyprop} and Theorem \ref{polyfreg} for the polynomially $t$-regular domains.

\begin{theorem}\label{tpolyreg}
Let $D$ be an integral domain.  Then the first condition below implies the latter two conditions.
\begin{enumerate}
\item[(a)] $D$ is polynomially $t$-regular.
\item[(b)] $D$ is polynomially L-regular and $D_\ppp$ is polynomially $t$-regular for every prime ideal $\ppp$ of $D$.
\item[(c)] $D$ is polynomially L-regular and $D_\ppp$ is absolutely polynomially regular for every $t$-maximal ideal $\ppp$ of $D$.
\end{enumerate}
Moreover, if $\Int(D)$ is flat as a $D$-module, then conditions (a) and (c) are equivalent.
\end{theorem}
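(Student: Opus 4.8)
The plan is to prove $(\mathrm a)\Rightarrow(\mathrm b)$ and $(\mathrm a)\Rightarrow(\mathrm c)$ directly, in the style of Lemma~\ref{weasyprop}, and then to prove $(\mathrm c)\Rightarrow(\mathrm a)$ under the flatness hypothesis by adapting the proofs of Theorems~\ref{polyfreg} and~\ref{wpolyfreg}. For $(\mathrm a)\Rightarrow(\mathrm b)$: a polynomially $t$-regular domain is polynomially L-$t$-regular, hence polynomially F-regular, hence polynomially L-regular, so the first half of $(\mathrm b)$ holds. For the second half, fix a prime ideal $\ppp$ of $D$. Since $D_\ppp$ is a flat overring of $D$ it is a $t$-linked extension of $D$, so if $A$ is any $t$-linked extension of $D_\ppp$ then $A$ is a $t$-linked extension of $D$ by Proposition~\ref{tlinkedlemma}(a), hence a polynomially regular extension of $D$ because $D$ is polynomially $t$-regular, hence a polynomially regular extension of $D_\ppp$ by \cite[Lemma 3.5]{ell}. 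Thus $D_\ppp$ is polynomially $t$-regular.

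For $(\mathrm a)\Rightarrow(\mathrm c)$, again $D$ is polynomially L-regular. Let $\ppp\in\TM(D)$ and let $A$ be an arbitrary extension of $D_\ppp$. For every $\qqq\in\TM(A)$ the contraction $\qqq\cap D_\ppp$ is a prime ideal of $D_\ppp$ and so is contained in $\ppp D_\ppp$; hence $\qqq\cap D\subseteq\ppp D_\ppp\cap D=\ppp\in\TM(D)$, and Proposition~\ref{tlinkedextensions}(3) shows $A$ is a $t$-linked extension of $D$. As before, $A$ is then a polynomially regular extension of $D$, hence of $D_\ppp$ by \cite[Lemma 3.5]{ell}. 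Since $A$ was arbitrary, $D_\ppp$ is absolutely polynomially regular.

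Now assume $(\mathrm c)$ and that $\Int(D)$ is flat over $D$, and let $A$ be a $t$-linked extension of $D$, so $A=\bigcap_{\ppp\in\TM(D)}A_\ppp$ by Proposition~\ref{tlinkedextensions}(4), whence $\Int(D,A)=\bigcap_{\ppp\in\TM(D)}\Int(D,A_\ppp)$. Fix $\ppp\in\TM(D)$. By $(\mathrm c)$, $D_\ppp$ is absolutely polynomially regular, so $A_\ppp$ is polynomially regular over $D_\ppp$, that is, $\Int(D_\ppp,A_\ppp)=A_\ppp\Int(D_\ppp)=A_\ppp\Int(D)_\ppp=(A\Int(D))_\ppp$, the middle equality by polynomial L-regularity. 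Granting the localization identity $\Int(D,A_\ppp)=\Int(D_\ppp,A_\ppp)$ for the multiplicative set $D\setminus\ppp$ — the same step that enters the proofs of Theorems~\ref{polyfreg} and~\ref{wpolyfreg} — we get $\Int(D,A)=\bigcap_{\ppp\in\TM(D)}(A\Int(D))_\ppp$. On the other hand, since $\Int(D)$ is flat over $D$ we have $A\Int(D)=A\otimes_D\Int(D)$ by \cite[Proposition 3.9]{ell}, which is a flat, hence $t$-linked, extension of $A$; since $A$ is a $t$-linked extension of $D$, Proposition~\ref{tlinkedlemma}(a) shows $A\Int(D)$ is a $t$-linked extension of $D$, so $A\Int(D)=\bigcap_{\ppp\in\TM(D)}(A\Int(D))_\ppp$ again by Proposition~\ref{tlinkedextensions}(4). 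Combining, $\Int(D,A)=A\Int(D)$, so $A$ is polynomially regular over $D$; as $A$ was an arbitrary $t$-linked extension, $D$ is polynomially $t$-regular, and with $(\mathrm a)\Rightarrow(\mathrm c)$ this gives the asserted equivalence.

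The step I expect to be the main obstacle is the localization identity $\Int(D,A_\ppp)=\Int(D_\ppp,A_\ppp)$ used above. In Theorems~\ref{polyfreg} and~\ref{wpolyfreg} the localized extension $A_\ppp$ is flat, respectively locally $t$-linked, over $D_\ppp$, whereas here $A$ is merely $t$-linked over $D$ and $A_\ppp$ need not be locally $t$-linked over $D_\ppp$; so one must check the needed localization behaviour of $\Int(D,\,\cdot\,)$ at this generality. This should reduce, via $\Int(D,A_\ppp)=\Int(D,A_\ppp\cap K)$ and likewise over $D_\ppp$, to a statement about $D$-submodules of $K$ and the multiplicative set $D\setminus\ppp$ of the type recorded in \cite[Corollary I.2.6]{cah}. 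All the other ingredients — Propositions~\ref{tlinkedextensions} and~\ref{tlinkedlemma}, \cite[Lemma 3.5]{ell}, \cite[Proposition 3.9]{ell}, polynomial L-regularity, and the flatness of $\Int(D)$ — are already available.
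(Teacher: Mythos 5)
Your proof is correct and follows essentially the same route as the paper's: (a)$\Rightarrow$(b) and (a)$\Rightarrow$(c) via transitivity of $t$-linkedness and \cite[Lemma 3.5]{ell} (the paper checks that an arbitrary extension $A$ of $D_\ppp$ with $\ppp \in \TM(D)$ is $t$-linked over $D$ using characterization (4) of Proposition \ref{tlinkedextensions} instead of your (3), an immaterial difference), and (c)$\Rightarrow$(a) under flatness via the same chain $\Int(D,A)=\bigcap_{\ppp\in\TM(D)}\Int(D_\ppp,A_\ppp)=\bigcap_{\ppp\in\TM(D)}(A\Int(D))_\ppp=A\Int(D)$. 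The localization identity $\Int(D,A_\ppp)=\Int(D_\ppp,A_\ppp)$ that you flag as the main obstacle is not a gap: it is precisely what \cite[Lemma 3.5]{ell} gives for the flat overring $D_\ppp$ and the extension $A_\ppp\supset D_\ppp$ (the generalization of \cite[Corollary I.2.6]{cah} you anticipate), and the paper uses it silently in exactly the same way here and in Theorems \ref{polyfreg} and \ref{wpolyfreg}.
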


\begin{proof}
Suppose that (a) holds.  Let $\ppp$ be a prime ideal of $D$, and let $A$ be a $t$-linked extension of $D_\ppp$.  Now, $D_\ppp$ is a flat
and therefore $t$-linked extension of $D$.  Thus, it follows from Proposition \ref{tlinkedlemma}(a) that $A$ is a $t$-linked extension of $D$.  Thus $A$ is a polynomially regular extension of $D$, since $D$ is polynomially $t$-regular.  Therefore, since $D_\ppp$ is a flat and polynomially regular overring of $D$ and $A$ is a polynomially regular extension of $D$, it follows from \cite[Lemma 3.5]{ell} that $A$ is a polynomially regular extension of $D_\ppp$.  Thus $D_\ppp$ is polynomially $t$-regular, and (a) implies (b).  Suppose, on the other hand, that $\ppp$ is a $t$-maximal ideal of $D$, and let $A$ be any extension of $D_\ppp$.  Then $A \subset \bigcap_{\qqq \in \TM(D)} A_\qqq \subset A_\ppp = A$, whence $A$ is a $t$-linked extension of $D$.  Therefore $A$ is a polynomially regular extension of $D$.  Again, it follows that $A$ is
a polynomially regular extension of $D_\ppp$.  Thus $D_\ppp$ is absolutely polynomially regular.  Thus (a) implies (c).

Suppose now that $\Int(D)$ is flat as a $D$-module, suppose that (c) holds, and let $A$ be a $t$-linked extension of $D$. Then we have $$\Int(D,A) = \bigcap_{\ppp \in \TM(D)} \Int(D_\ppp,A_\ppp) = \bigcap_{\ppp \in \TM(D)} (A\Int(D))_\ppp.$$  Since $\Int(D)$ is flat as a $D$-module, we have $A \Int(D) = A \otimes_D \Int(D)$, and therefore $A\Int(D)$ is a $t$-linked extension of $D$ by Proposition \ref{tlinkedlemma}.  Thus we have
$\Int(D,A) =  \bigcap_{\ppp \in \TM(D)} (A \Int(D))_\ppp = A\Int(D)$.  Thus $A$ is a polynomially regular extension of $D$.  Thus (c) implies (a) and the two conditions are equivalent in this case.
\end{proof}

Finally, we note that Theorem \ref{introthm} of the introduction follows from Theorem \ref{tpolyreg}, Corollary \ref{pvmdext}, and Lemmas \ref{TVPVMD} and \ref{locallyfreecor}.  In fact, a similar argument yields the following result, which has a slightly weaker hypothesis and a slightly weaker conclusion than Theorem \ref{introthm}.

\begin{theorem}\label{introthm2}
Let $D$ be an infinite domain such that $\ppp D_\ppp$ is principal and $\Int(D_\ppp) = \Int(D)_\ppp$ for every $t$-localizing $t$-maximal ideal $\ppp$ of $D$ with finite residue field.  Then we have the following.
\begin{enumerate}
\item[(a)] $\Int(D)$ is locally free as a $D$-module.
\item[(b)] $\Int(S^{-1}D) = S^{-1}\Int(D)$ for every multiplicative subset $S$ of $D$.
\item[(c)] Every locally $t$-linked extension of $D$ is polynomially regular.
\item[(d)] A locally $t$-linked extension of $D$ is polynomially complete if and only if it is unramified and has trivial residue field extensions at every $t$-localizing $t$-maximal ideal of $D$ with finite residue field.
\end{enumerate}
\end{theorem}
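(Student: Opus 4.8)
The plan is to deduce all four statements from results already established, the one new ingredient being a bookkeeping step that identifies the index set $\sKr(K/D)$ with the family of $t$-localizing $t$-maximal ideals of finite residue field. First I would record the following elementary observation. A prime $\ppp$ of $D$ with finite residue field is automatically maximal, since $D/\ppp$ embeds in the finite field $\kappa(\ppp)$ and is therefore a finite integral domain, hence a field; moreover, if such a $\ppp$ is $t$-localizing then $\ppp$ is a $t$-ideal of $D$, and being maximal among all proper ideals it is in particular $t$-maximal. Since the $t$-localizing primes of $D$ are exactly the members of $\sKr(K/D)$, this shows that $\{\ppp \in \sKr(K/D) : |\kappa(\ppp)| < \infty\}$ coincides with the set of $t$-localizing $t$-maximal ideals of $D$ with finite residue field. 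Consequently the hypothesis of the theorem supplies exactly the data ``$\ppp D_\ppp$ principal and $\Int(D_\ppp) = \Int(D)_\ppp$'' at every $\ppp \in \sKr(K/D)$ with finite residue field.

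Statements (a) and (b) are then immediate. Statement (a) is Lemma \ref{locallyfreecor}, whose hypothesis is word-for-word ours. Statement (b) is the equivalence of conditions (2) and (4) of Proposition \ref{localization}, condition (4) being precisely our hypothesis. In particular, (a) yields that $\Int(D)$ is flat as a $D$-module and (b) yields that $D$ is polynomially L-regular; both facts are used below.

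For statement (c), note that it asserts exactly that $D$ is polynomially L-$t$-regular. Since $\Int(D)$ is flat and $D$ is polynomially L-regular, Theorem \ref{wpolyfreg} reduces the claim to showing that $D_\ppp$ is polynomially L-$t$-regular for every $t$-localizing $t$-maximal ideal $\ppp$ of $D$ with finite residue field. For each such $\ppp$ the hypothesis gives that $\ppp D_\ppp$ is principal, so $D_\ppp$ is a local domain with principal maximal ideal, which is absolutely polynomially regular and, a fortiori, polynomially L-$t$-regular. I expect this passage to the local case to be the main obstacle: the structure of $\Int$ over a local domain with principal maximal ideal is the only non-formal input to the entire argument (it is the same input that underlies the proof of Theorem \ref{introthm} via Theorem \ref{tpolyreg}), whereas everything else is a direct application of the $t$-local characterizations recalled above.

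For statement (d), I would apply Theorem \ref{mostgeneralthm} with $\SS = \sKr(K/D)$. If $A$ is a locally $t$-linked extension of $D$, then $A = \bigcap_{\ppp \in \sKr(K/D)} A_\ppp$ by Proposition \ref{locallytlinked}, so the first hypothesis of Theorem \ref{mostgeneralthm} holds; its second hypothesis holds by the reconciliation established in the first paragraph. Theorem \ref{mostgeneralthm} then gives that $A$ is polynomially complete if and only if $A$ is unramified and has trivial residue field extensions at every $\ppp \in \sKr(K/D)$ with finite residue field, i.e., at every $t$-localizing $t$-maximal ideal of $D$ with finite residue field, which is statement (d). Theorem \ref{mostgeneralthm} also records that $\Int(D,A) = \bigcap_{\ppp \in \sKr(K/D)} (A\Int(D))_\ppp$ and that weak polynomial completeness already forces polynomial completeness here; this is the exact analogue, with $\SS = \sKr(K/D)$ in place of $\SS = \TM(D)$, of Corollary \ref{pvmdext} as used in the proof of Theorem \ref{introthm}.
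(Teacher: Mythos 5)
Your proposal is correct and is essentially the argument the paper intends: the paper's ``proof'' is only the remark that a similar argument to the one for Theorem \ref{introthm} works, and your ingredients (Lemma \ref{locallyfreecor} for (a), Proposition \ref{localization}(4)$\Leftrightarrow$(2) for (b), Theorem \ref{wpolyfreg} for (c), and Theorem \ref{mostgeneralthm} with $\SS = \sKr(K/D)$ together with Proposition \ref{locallytlinked} and your $\sKr$ bookkeeping for (d)) are exactly the analogues of the ones the paper cites. The one step you flag as an ``obstacle''---that an infinite local domain with principal maximal ideal is absolutely polynomially regular, hence polynomially L-$t$-regular---is not a gap: it is the same local input from \cite{ell} that the paper's own derivation of Theorem \ref{introthm} via Theorem \ref{tpolyreg}(c) uses, and within this paper it follows from Theorem \ref{mostgeneralthm} applied with $\SS = \{\mm\}$ for $\mm$ the maximal ideal (localization at units being trivial); alternatively, (c) can be obtained without it by applying Theorem \ref{mostgeneralthm} with $\SS = \sKr(K/D)$ and using flatness of $\Int(D)$ from (a), exactly as in the proof of Theorem \ref{wpolyfreg}.
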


\subsection{Stability of regularity conditions}

Recall that an extension $A \supset D$ of domains is said to be {\it polynomially complete} if $\Int(D,A) = \Int(A)$, and the extension is said to be a {\it weakly polynomially complete} if $\Int(D) \subset \Int(A)$.  As noted at the end of Section 3 of \cite{ell}, the absolutely polynomially regular domains are stable under (weakly) polynomially complete extensions.  Thus, for example, if $D$ is absolutely polynomially regular, then $\Int(D^\XX)$ is absolutely polynomially regular for any set $\XX$.  The following result shows that all of the regularity conditions we have studied are stable under the appropriate polynomially complete extensions.

\begin{lemma}\label{extext}
If $A$ is a weakly polynomially complete extension of a domain $D$ and $B$ is a polynomially regular extension of $D$ containing $A$, then $B$ is a polynomially regular extension of $A$.  Moreover, we have the following.
\begin{enumerate}
\item[(a)] $D$ is absolutely polynomially regular if and only if every (weakly) polynomially complete extension of $D$ is absolutely polynomially regular.
\item[(b)] $D$ is polynomially $t$-regular if and only if every (weakly) polynomially complete $t$-linked extension of $D$ is polynomially $t$-regular.
\item[(c)] $D$ is polynomially L-$t$-regular if and only if every (weakly) polynomially complete locally $t$-linked extension of $D$ is polynomially L-$t$-regular.
\item[(d)] $D$ is polynomially F-regular if and only if every (weakly) polynomially complete flat extension of $D$ is polynomially F-regular.
\item[(e)] Every flat overring of $D$ is polynomially complete, and $D$ is polynomially L-regular if and only if every flat overring of $D$ is polynomially L-regular.
\end{enumerate}
\end{lemma}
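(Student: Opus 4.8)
The plan is to prove the (unnumbered) first assertion directly — it mildly generalizes the remark, recalled just before the lemma, that absolute polynomial regularity is inherited by weakly polynomially complete extensions — and then to derive (a)--(e) from it by one uniform argument. For the first assertion, write $L$ for the quotient field of $B$. Since $\Int(A,B)$ is a $B$-module and $f(A)\subseteq A\subseteq B$ for every $f\in\Int(A)$, we have $B\Int(A)\subseteq\Int(A,B)$. Conversely, let $f\in\Int(A,B)$, so $f\in L[X]$ and $f(A)\subseteq B$; as $D\subseteq A$ this gives $f(D)\subseteq B$, i.e. $f\in\Int(D,B)$, and $\Int(D,B)=B\Int(D)$ because $B$ is polynomially regular over $D$. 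Writing $f=\sum_i b_ig_i$ with $b_i\in B$ and $g_i\in\Int(D)$, and using $\Int(D)\subseteq\Int(A)$ (weak polynomial completeness of $A$ over $D$), we get $g_i\in\Int(A)$ and hence $f\in B\Int(A)$; thus $\Int(A,B)=B\Int(A)$, i.e. $B$ is polynomially regular over $A$.

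For the five biconditionals I would argue uniformly. In each, the backward implication is immediate: $D$ is a polynomially complete — hence weakly polynomially complete — extension of itself, and is also an arbitrary, $t$-linked, locally $t$-linked, flat, and flat-overring extension of itself, so the right-hand hypothesis applied to $D\supseteq D$ gives the stated regularity of $D$. For the forward implication it suffices to treat the ``weakly polynomially complete'' version (which subsumes the other). So suppose $D$ has the property $\mathcal P$ at issue — absolute polynomial regularity for (a), polynomial $t$-regularity for (b), polynomial L-$t$-regularity for (c), polynomial F-regularity for (d), polynomial L-regularity for (e) — let $A$ be an extension of $D$ of the corresponding type $\mathcal T$ — arbitrary for (a), $t$-linked for (b), locally $t$-linked for (c), flat for (d), flat overring for (e) — that is weakly polynomially complete over $D$ (which for (e) is automatic by its first clause), and let $B$ be a type-$\mathcal T$ extension of $A$. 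Granting that $B$ is then also a type-$\mathcal T$ extension of $D$, property $\mathcal P$ of $D$ makes $B$ polynomially regular over $D$ (using condition (1) of Proposition \ref{localization} in case (e)), so the first assertion makes $B$ polynomially regular over $A$; since $B$ ranges over all type-$\mathcal T$ extensions of $A$, this says exactly that $A$ has property $\mathcal P$.

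It remains to verify transitivity of each type $\mathcal T$, plus the first clause of (e). Type ``arbitrary'' (case (a)) needs nothing; composites of flat extensions are flat (case (d)); a flat overring of a flat overring of $D$ is a flat overring of $D$, and the first clause of (e) — that a flat overring $D'$ of $D$ is polynomially complete over $D$ — is \cite[Lemma 3.5]{ell} (or: $D'_{\mm'}=D_{\mm'\cap D}$ for each maximal ideal $\mm'$ of $D'$, whence $D'=\bigcap_{\mm'}D_{\mm'\cap D}$, each $\Int(D,D_{\mm'\cap D})=\Int(D_{\mm'\cap D})$ by \cite[Corollary I.2.6]{cah}, and $\Int(D')\subseteq\Int(D,D')$ automatically, so $\Int(D,D')=\Int(D')$); and transitivity of $t$-linkedness (case (b)) is Proposition \ref{tlinkedlemma}(a). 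The step demanding real care is transitivity of local $t$-linkedness for (c): given $B$ locally $t$-linked over $A$ and $A$ locally $t$-linked over $D$, fix a prime $\ppp$ of $D$; localizing a locally $t$-linked extension at a multiplicative subset of the bottom ring again yields a locally $t$-linked extension (the primes and localizations of $S^{-1}A$ are among those of $A$), so $(D\setminus\ppp)^{-1}B$ is locally $t$-linked over $A_\ppp$ and hence $t$-linked over $A_\ppp$ by the final assertion of Proposition \ref{locallytlinked}; since $A_\ppp$ is $t$-linked over $D_\ppp$, Proposition \ref{tlinkedlemma}(a) gives that $(D\setminus\ppp)^{-1}B$ is $t$-linked over $D_\ppp$; as $\ppp$ was arbitrary, $B$ is locally $t$-linked over $D$. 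I expect this verification — that local $t$-linkedness survives localization of the base and composes — to be the only part of the argument that is not purely formal.
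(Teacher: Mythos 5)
Your proposal is correct and follows essentially the same route as the paper: the first assertion is proved by the same chain $\Int(A,B)\subset\Int(D,B)=B\Int(D)\subset B\Int(A)\subset\Int(A,B)$, and (a)--(e) are deduced from it via the trivial backward direction and transitivity of the relevant class of extensions (Proposition \ref{tlinkedlemma}(a) for $t$-linked, flatness and flat overrings for (d)--(e), with \cite[Lemma 3.5]{ell} giving the first clause of (e)). The only difference is that you explicitly verify transitivity of local $t$-linkedness for (c), which the paper dismisses with ``similarly''; your verification (localize the top extension at $D\setminus\ppp$, apply the last assertion of Proposition \ref{locallytlinked} and then Proposition \ref{tlinkedlemma}(a)) is sound.
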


\begin{proof}
The hypotheses on $A$ and $B$ mean that $D \subset A \subset B$ and $\Int(D) \subset \Int(A)$, and $\Int(D,B) = B \Int(D)$.  Thus we have
$$\Int(A,B) \subset \Int(D,B) = B \Int(D) \subset B \Int(A) \subset \Int(A,B),$$ whence $\Int(A,B) = B\Int(A)$.  Thus $B$ is a polynomially regular extension of $A$.  Statement (a) follows immediately.  Statement (b) then follows from Proposition \ref{tlinkedlemma}(a), and statements (c) and (d) follows similarly.  Finally, by \cite[Lemma 3.5]{ell}, every flat overring of $D$ is polynomially complete, and thus (e) follows, since a flat overring of a flat overring is a flat overring.
\end{proof}

\subsection{Application to tensor product decompositions}

Recall that if $D$ is a domain such that the canonical $D$-algebra homomorphism
$\theta_\XX: \bigotimes_{X \in \XX} \Int(D) \longrightarrow \Int(D^\XX)$, where the tensor product is over $D$, is an isomorphism (resp., surjective) for every set $\XX$, then we say that $D$ is {\it polynomially composite} (resp., {\it almost polynomially composite}).  
As of yet we do not know an example of a domain that is not polynomially composite.

\begin{proposition}\label{localcomposite}
Let $D$ be any integral domain with quotient field $K$.  The domain $D_\ppp$ is polynomially composite for every strong polynomial prime $\ppp$ of $D$, and the following conditions are equivalent.
\begin{enumerate}
\item $D_\ppp$ is polynomially composite (resp., almost polynomially composite) for every prime ideal $\ppp$ of $D$.
\item $D_\ppp$ is polynomially composite (resp., almost polynomially composite) for every $t$-localizing $t$-maximal ideal $\ppp$ of $D$ with finite residue field.
\item $D_\ppp$ is polynomially composite (resp., almost polynomially composite) for every prime ideal $\ppp$ of $D$ such that $\ppp D_\ppp$ is a weakly associated prime of $K/D_\ppp$ with finite residue field.
\item $D_\ppp$ is polynomially composite (resp., almost polynomially composite) for every weak int prime $\ppp$ of $D$.
\end{enumerate}
Moreover, if any of the above conditions holds and $D$ is polynomially L-regular, then $D$ is polynomially composite (resp., almost polynomially composite).
\end{proposition}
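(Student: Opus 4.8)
The plan is to reduce the global statement about $\Int(D^\XX)$ to the local statements about $\Int(D_\ppp^{\XX})$ by exploiting polynomial L-regularity, which lets us commute localization with the formation of integer-valued polynomial rings. First I would establish the equivalence of conditions (1)--(4): the implications $(1) \Rightarrow (2) \Rightarrow (3) \Rightarrow (4)$ follow from Lemma~\ref{implications} (every $t$-localizing $t$-maximal ideal with finite residue field has $\ppp D_\ppp \in \TS(D_\ppp)$, and the chain of implications there funnels weak int primes into the larger classes), and $(4) \Rightarrow (1)$ follows from Lemma~\ref{strongpolynomialprime}: if $\ppp$ is not a weak int prime, then $\ppp$ is a strong polynomial prime, so $\Int(D_\ppp) = D_\ppp[X]$, and then by the analogous reasoning for several variables (using \cite[Proposition XI.1.1]{cah} or a direct check) $\Int(D_\ppp^\XX) = D_\ppp[\XX]$, for which $\theta_\XX$ is trivially an isomorphism since tensor products of polynomial rings over $D_\ppp$ are polynomial rings. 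This also gives the parenthetical first sentence: $D_\ppp$ is polynomially composite for every strong polynomial prime $\ppp$.

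Next I would prove the final ``moreover'' clause. Assume $D$ is polynomially L-regular and condition (1) holds, i.e.\ $D_\ppp$ is polynomially composite for every prime $\ppp$. By transfinite induction on $|\XX|$ one shows that $D$ polynomially L-regular implies $\Int(D_\ppp^\XX) = \Int(D^\XX)_\ppp$ for every $\ppp$ and every set $\XX$; this is precisely the point remarked at the end of Section~2.3, and the induction step uses \cite[Proposition XI.1.1]{cah} together with the fact that localization commutes with tensor products. Granting this, I would localize the homomorphism $\theta_\XX$ at an arbitrary prime $\ppp$ of $D$: since localization is exact and commutes with the (possibly infinite) tensor product over $D$, the localized map $(\theta_\XX)_\ppp$ is identified with the canonical map $\bigotimes_{X \in \XX} \Int(D_\ppp) \longrightarrow \Int(D_\ppp^\XX)$, which is an isomorphism (resp.\ surjective) by hypothesis. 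A homomorphism of $D$-modules that becomes an isomorphism (resp.\ surjective) after localizing at every prime is itself an isomorphism (resp.\ surjective); hence $\theta_\XX$ is an isomorphism (resp.\ surjective) for every $\XX$, so $D$ is polynomially composite (resp.\ almost polynomially composite).

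The main obstacle is the transfinite induction establishing $\Int(D_\ppp^\XX) = \Int(D^\XX)_\ppp$ for all sets $\XX$ from polynomial L-regularity of $D$. The successor step requires that $\Int(D^\XX)$ itself be polynomially L-regular whenever $D$ is, or at least that the localization-commutation property be inherited; one passes from $\XX$ to $\XX \amalg \{Y\}$ via $\Int((D^\XX)^{\{Y\}}) = \Int(D^{\XX \amalg \{Y\}})$ and the single-variable case applied to the base ring $\Int(D^\XX)$, and at limit ordinals one uses that $\Int$ of a union of variable sets is the union of the $\Int$'s (a polynomial involves only finitely many variables). Once the commutation is in hand, the descent of isomorphism/surjectivity through local data is routine, and the equivalence of (1)--(4) is a direct reading of Lemma~\ref{implications} and Lemma~\ref{strongpolynomialprime}.
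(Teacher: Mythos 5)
Your treatment of the first sentence and of the equivalence of (1)--(4) is essentially sound and close in spirit to the paper's: the implications $(1)\Rightarrow(2)\Rightarrow(3)\Rightarrow(4)$ come from the containments of the relevant classes of primes supplied by Lemma \ref{implications} (together with the observation that a prime with finite residue field is maximal, so a $t$-localizing such prime is $t$-maximal), and $(4)\Rightarrow(1)$ reduces to the automatic statement at strong polynomial primes. For that automatic statement the paper argues differently -- it notes that $\Int(D_\ppp)=D_\ppp[X]$ is free over $D_\ppp$ and invokes \cite[Proposition 6.8]{ell} -- whereas you give a direct argument that $\Int(D_\ppp)=D_\ppp[X]$ forces $\Int(D_\ppp^\XX)=D_\ppp[\XX]$ (a coefficientwise induction on the number of variables does this cleanly, and it is more self-contained than the appeal to \cite[Proposition XI.1.1]{cah}), after which $\theta_\XX$ is trivially an isomorphism. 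Both routes are fine.

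The gap is in the ``moreover'' clause. The paper disposes of it in one line by citing \cite[Corollary 6.6]{ell}; you instead try to prove it by localizing $\theta_\XX$, and the key input you assume is that polynomial L-regularity of $D$ alone yields $\Int(D_\ppp^\XX)=\Int(D^\XX)_\ppp$ for every set $\XX$, ``by transfinite induction \ldots precisely the point remarked at the end of Section 2.3.'' That remark asserts no such thing: it says the transfinite induction works \emph{if} one knows in addition that $\Int(D)$ is polynomially L-regular whenever $D$ is (an implication the paper does not establish and treats as unknown), or if $D$ is already polynomially composite, which is circular here. You identify this yourself as ``the main obstacle'' -- the successor step needs exactly that inheritance -- but then proceed as though it were available, so as written the proof of the final clause is incomplete. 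The shortest repair is to cite \cite[Corollary 6.6]{ell} as the paper does. Alternatively, your localization strategy can be salvaged without the full identity: one has the containment $\Int(D^\XX)_\ppp\subset\Int(D_\ppp^\XX)$ (the several-variable analogue of \cite[Proposition I.2.2 and Corollary I.2.6]{cah}), and, using one-variable L-regularity to identify $\bigl(\bigotimes_{X\in\XX}\Int(D)\bigr)_\ppp$ with $\bigotimes_{X\in\XX}\Int(D_\ppp)$, the composite of $(\theta_\XX)_\ppp$ with that inclusion is the local map $\theta_\XX^{D_\ppp}$; its surjectivity (given by the hypothesis at $\ppp$) forces the inclusion to be an equality, and only then does $(\theta_\XX)_\ppp$ become the local $\theta$-map, so that isomorphism (resp.\ surjectivity) can be checked locally. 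Without either the citation or an argument of this kind, the transfinite-induction step is an assumption, not a proof.
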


\begin{proof}
If $\ppp$ is a strong polynomial prime of $D$, that is, if $\Int(D_\ppp) = D_\ppp[X]$, then $\Int(D_\ppp)$ is free over $D_\ppp$, and thus by \cite[Proposition 6.8]{ell} the domain $D_\ppp$ is polynomially composite.  The four conditions are therefore equivalent by Lemma \ref{implications}.  Finally, if condition (1) holds and $D$ is polynomially L-regular, then $D$ is polynomially composite (resp., almost polynomially composite) by \cite[Corollary 6.6]{ell}.
\end{proof}

By the following result, polynomial $t$-regularity, polynomial L-$t$-regularity, polynomial F-regularity, and polynomial L-regularity are all useful conditions for studying (almost) polynomial compositeness.

\begin{theorem}\label{comp}
Let $D$ be an integral domain.  We have the following.
\begin{enumerate}
\item[(a)] If $D$ is polynomially $t$-regular or polynomially L-$t$-regular, then $D$ is almost polynomially composite.
\item[(b)] If $D$ is polynomially F-regular and $\Int(D)$ is flat as a $D$-module, then $D$ is polynomially composite.
\item[(c)] If $D$ is polynomially L-regular and $\Int(D)$ is locally free as a $D$-module, then $D$ is polynomially composite.
\end{enumerate}
\end{theorem}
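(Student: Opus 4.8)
The plan is to derive all three parts from material already in hand, the core being an induction on the size of $\XX$ showing that $\theta_\XX$ is surjective, and in case (b) an isomorphism. Part~(c) is the quickest: if $D$ is polynomially L-regular then $\Int(D_\ppp)=\Int(D)_\ppp$ for every prime $\ppp$, and if in addition $\Int(D)$ is locally free over $D$ then each $\Int(D_\ppp)=\Int(D)_\ppp$ is free over $D_\ppp$; by \cite[Proposition~6.8]{ell} every $D_\ppp$ is then polynomially composite, so condition~(1) of Proposition~\ref{localcomposite} holds, and that proposition yields that $D$ is polynomially composite.

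For part~(a) I would show that $\theta_\XX$ is surjective for all $\XX$. Since $\Int(D^\XX)\subseteq K[\XX]$ and every polynomial uses only finitely many indeterminates, both $\Int(D^\XX)$ and $\bigotimes_{X\in\XX}\Int(D)$ are the directed colimits of the corresponding objects for finite subsets of $\XX$, compatibly with the maps $\theta$, so it suffices to treat finite $\XX$, by induction on $|\XX|$ (the finite-field case being trivial, so assume $D$ infinite). The case $|\XX|\le 1$ is clear. For the step, suppose the image of $\theta_\XX$ is $A:=\Int(D^\XX)$ and adjoin a new indeterminate $Z$; then the inductive hypothesis identifies the image of $\theta_{\XX\amalg\{Z\}}$ with the $D$-subalgebra $A\Int(D)$ of $\Int(D^{\XX\amalg\{Z\}})$, where $\Int(D)$ denotes the copy in the indeterminate $Z$. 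Now chain three equalities: $\Int(D^{\XX\amalg\{Z\}})=\Int(A)$ by \cite[Proposition~XI.1.1]{cah} applied with one new indeterminate; $\Int(A)=\Int(D,A)$ because $A=\Int(D^\XX)$ is a polynomially complete extension of $D$ by \cite[Proposition~2.4]{ell}; and $\Int(D,A)=A\Int(D)$, that is, $A$ is a polynomially regular extension of $D$. Then $\Int(D^{\XX\amalg\{Z\}})=A\Int(D)$ is precisely surjectivity of $\theta_{\XX\amalg\{Z\}}$. It remains to see that $A=\Int(D^\XX)$ is a polynomially regular extension of $D$. If $D$ is polynomially $t$-regular this is immediate, since $\Int(D^\XX)$ is a $t$-linked extension of $D$ by Lemma~\ref{inttlinked}(a). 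If $D$ is polynomially L-$t$-regular I would instead verify that $\Int(D^\XX)$ is a \emph{locally} $t$-linked extension of $D$: combining Lemma~\ref{inttlinked}(b), the polynomial completeness of $\Int(D)$ over $D$, and Lemma~\ref{extext}(c), polynomial L-$t$-regularity is inherited by $\Int(D)$, and hence all the way up the tower $D\subseteq\Int(D)\subseteq\Int(\Int(D))\subseteq\cdots$, so $\Int(D)$ is polynomially L-regular whenever $D$ is; by the transfinite induction indicated in the remark at the end of Section~2.3 this gives $\Int(D_\ppp^\XX)=\Int(D^\XX)_\ppp$ for every prime $\ppp$ and every $\XX$, whence $\Int(D^\XX)$ is locally $t$-linked over $D$ by Lemma~\ref{inttlinked}(a) applied over each $D_\ppp$, and then polynomial L-$t$-regularity of $D$ makes it a polynomially regular extension of $D$.

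Part~(b) runs the same induction, but carries through at each finite stage the stronger statement that $\theta_\XX$ is an isomorphism. Granting this inductively, $A=\Int(D^\XX)\cong\bigotimes_{X\in\XX}\Int(D)$ is a flat $D$-module, so $A$ is a flat extension of $D$, hence (as $D$ is polynomially F-regular) a polynomially regular extension of $D$, and the three equalities above again give surjectivity of $\theta_{\XX\amalg\{Z\}}$. Injectivity needs only flatness of $\Int(D)$: for any $\XX$ the module $M=\bigotimes_{X\in\XX}\Int(D)$ is flat, hence torsion-free, so $M$ embeds in $M\otimes_D K=K[\XX]$, and that embedding factors as $\theta_\XX$ followed by the inclusion $\Int(D^\XX)\hookrightarrow K[\XX]$, forcing $\theta_\XX$ to be injective; passing to the colimit over finite subsets then delivers the isomorphism $\theta_\XX$ for arbitrary $\XX$.

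I expect the delicate point to be the polynomially L-$t$-regular case of part~(a): the claim that $\Int(D^\XX)$ is a locally $t$-linked extension of $D$ is not formal and rests on the self-propagation of polynomial L-$t$-regularity under $D\mapsto\Int(D)$ (via Lemma~\ref{extext}(c)) together with the equality $\Int(D_\ppp^\XX)=\Int(D^\XX)_\ppp$. The remaining content is bookkeeping with tensor products and directed colimits, once one has the identity $\Int(D^{\XX\amalg\{Z\}})=\Int(D,\Int(D^\XX))$ and the polynomial-regularity input appropriate to each hypothesis.
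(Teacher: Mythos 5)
Your proposal is correct, but it is packaged differently from the paper's proof, which is much shorter because it leans on the companion paper \cite{ell}: there, (b) and (c) are disposed of by citing \cite[Propositions 6.8 and 6.10]{ell} outright, and for (a) the paper notes that polynomial $t$-regularity implies polynomial L-$t$-regularity, propagates L-$t$-regularity up the tower $D \subset \Int(D) \subset \Int(D^2) \subset \cdots$ exactly as you do (Lemma \ref{inttlinked} plus Lemma \ref{extext}(c)), concludes that each $\Int(D^n)$ is a locally $t$-linked, hence polynomially regular, extension of $D$, and then cites \cite[Proposition 6.3]{ell}. What you do differently is to re-derive those external ingredients: your induction via $\Int(D^{\XX\amalg\{Z\}}) = \Int(\Int(D^\XX)) = \Int(D,\Int(D^\XX)) = \Int(D^\XX)\Int(D)$ (using \cite[Proposition XI.1.1]{cah} and \cite[Proposition 2.4]{ell}) is precisely the surjectivity criterion of \cite[Proposition 6.3]{ell}; your flatness/torsion-freeness argument for injectivity replaces the citation of \cite[Proposition 6.8]{ell} in (b); and your (c) reconstructs \cite[Proposition 6.10]{ell} from Proposition \ref{localcomposite} together with the ``$\Int(D_\ppp)$ free implies $D_\ppp$ polynomially composite'' instance of \cite[Proposition 6.8]{ell}, which is the same mode in which the paper itself invokes that result inside Proposition \ref{localcomposite}. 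Within part (a) there are two small divergences, both harmless: you handle the polynomially $t$-regular branch directly via Lemma \ref{inttlinked}(a) (arguably cleaner than the paper's reduction), and in the L-$t$-regular branch you reach local $t$-linkedness of $\Int(D^n)$ over $D$ by first commuting $\Int$ with localization ($\Int(D_\ppp^n) = \Int(D^n)_\ppp$, via the remark at the end of Section 2.3) and then applying Lemma \ref{inttlinked}(a) over each $D_\ppp$, whereas the paper applies Lemma \ref{inttlinked}(b) at each stage of the tower and composes locally $t$-linked extensions via a localized Proposition \ref{tlinkedlemma}; since your surjectivity induction only needs finite sets of indeterminates, your appeal to transfinite induction can be replaced by ordinary induction on $n$. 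In short, your route buys self-containedness (only \cite[Proposition XI.1.1]{cah}, \cite[Proposition 2.4]{ell}, and the free case of \cite[Proposition 6.8]{ell} remain as outside inputs), at the cost of length; the paper's route is a few lines modulo \cite{ell}.
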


\begin{proof}
We note first that (b) holds by \cite[Proposition 6.8]{ell} and (c) holds by \cite[Proposition 6.10]{ell}.  To prove (a), suppose that $D$ is polynomially L-$t$-regular.  It follows that $D$ is not a finite field.  By Lemma \ref{inttlinked}, the extension $\Int(D)$ of $D$ is locally $t$-linked, and since $\Int(D)$ is a polynomially complete extension of $D$, it follows from Lemma \ref{extext}(c) that $\Int(D)$ is also polynomially L-$t$-regular.  In particular, $\Int(D)$ is polynomially L-regular, and therefore $\Int(D^2) = \Int(\Int(D))$ is a locally $t$-linked extension of $\Int(D)$ by Lemma \ref{inttlinked}(b).  It follows from Proposition \ref{tlinkedlemma} that $\Int(D^2)$ is a locally $t$-linked extension of $D$.  By induction it follows that $\Int(D^n)$ is a locally $t$-linked, hence polynomially regular, extension of $D$ for every positive integer $n$.  Therefore, by \cite[Proposition 6.3]{ell}, $D$ is almost polynomially composite. 
\end{proof}

\end{document}